\documentclass{article}
\usepackage[T1]{fontenc}
\usepackage[utf8]{inputenc}
\usepackage{float}
\usepackage[export]{adjustbox}
\usepackage[margin=1.5in]{geometry}
\usepackage{graphicx}
\graphicspath{ {./images/} }
\usepackage{amsmath,amssymb,amsthm}
\numberwithin{equation}{section}
\usepackage[dvipsnames]{xcolor}
\usepackage{enumitem,amssymb}
\usepackage{hyperref}
\usepackage{todonotes}
\usepackage{url}\usepackage{caption}
\usepackage{subcaption}
\usepackage[nameinlink, capitalise, noabbrev]{cleveref}
\usepackage[authoryear,sort,round, numbers]{natbib}
\usepackage{hyphenat}
\makeatletter
\newcommand{\nosemic}{\renewcommand{\@endalgocfline}{\relax}}
\newcommand{\dosemic}{\renewcommand{\@endalgocfline}{\algocf@endline}}

\let\oldnl\nl
\newcommand{\nonl}{\renewcommand{\nl}{\let\nl\oldnl}}
\makeatother
\usetikzlibrary{3d,calc}
\newtheorem{theorem}{Theorem}[section]
\newtheorem{lemma}[theorem]{Lemma}
\newtheorem{proposition}[theorem]{Proposition}
\newtheorem{assumption}[theorem]{Assumption}

\newtheorem{corollary}[theorem]{Corollary}
\numberwithin{equation}{section}
\newcommand{\N}{\mathbb{N}}

\newcommand{\R}{\mathbb{R}}

\DeclareMathOperator{\spann}{span}

\DeclareMathOperator{\argmin}{arg\,min}

\usepackage{amsmath}

\newcommand\norma[1]{\left|#1\right|}

\usepackage{mathtools}
\Crefname{algocf}{Algorithm}{Algorithms}

\definecolor{dgreen}{RGB}{0,100,0}

\title{Accuracy Boost in Ensemble Kalman Inversion via Ensemble Control Strategies}
\makeatletter
\author{Ruben Harris and Claudia Schillings \footnote{Fachbereich Mathematik und Informatik, Freie Universit\"at Berlin, Arnimallee 6, 14195 Berlin, Germany, $\{$r.harris, c.schillings$\}$@fu-berlin.de} }
\makeatother
\date{}

\usepackage{cleveref}
\usepackage{algpseudocode}
\usepackage{algorithm}
\newenvironment{varalgorithm}[1]
  {\algorithm}
  {\endalgorithm}
\begin{document}

\maketitle
\begin{abstract}   
The Ensemble Kalman Inversion (EKI) method is widely used for solving inverse problems, leveraging ensemble-based techniques to iteratively refine parameter estimates. Despite its versatility, the accuracy of EKI is constrained by the subspace spanned by the initial ensemble, which may poorly represent the solution in cases of limited prior knowledge. This work addresses these limitations by optimising the subspace in which EKI operates, improving accuracy and computational efficiency. We derive a theoretical framework for constructing optimal subspaces in linear settings and extend these insights to nonlinear cases. A novel greedy strategy for selecting initial ensemble members is proposed, incorporating prior, data, and model information to enhance performance. Numerical experiments on both linear and nonlinear problems demonstrate the effectiveness of the approach, offering a significant advancement in the accuracy and scalability of EKI for high-dimensional and ill-posed problems.
\end{abstract}

\section{Introduction}
Mathematical models are indispensable for understanding a wide array of systems of scientific interest. These models facilitate analysis and prediction of system behaviours, but their accuracy relies on calibration to observed data — a process referred to as `inversion'. Inversion is the cornerstone of fields such as weather forecasting, medical imaging and machine learning that employ various techniques categorised as either variational/optimisation-based or Bayesian/statistical.

One notable inversion method is Ensemble Kalman Inversion (EKI), which integrates elements from both variational and Bayesian approaches. Introduced by \cite{iglesias2013ensemble}, EKI iteratively employs an Ensemble Kalman-Bucy Filter to tackle inverse problems. In linear scenarios, this takes the form of a preconditioned gradient flow equation (see \cite{schillings2017analysis}). 

The structure of the EKI update step results in the so-called subspace property \cite{iglesias2013ensemble}, whereby the EKI estimate remains confined to the subspace spanned by the initial ensemble. While this property acts as a natural regulariser, it can also lead to poor approximations if the subspace does not adequately capture the true solution. Traditionally, the initial ensemble is derived from prior knowledge and so, in the case of limited or uninformative priors, the chosen initial ensemble may result in a subspace poorly aligned with the true solution.

This work addresses these limitations by optimising the subspace within the EKI framework to improve both computational efficiency and solution accuracy. By constructing a subspace that takes into account the forward operator and the data set in addition to just the priors, we aim to maximise accuracy within a fixed computational budget (in terms of particle count). This refined approach leverages the structure of the inverse problem to make EKI more scalable and accuracte, especially when dealing with limited or inaccurate prior information.

\subsection{Literature overview}\label{sub:existingmethods}
Since it was first developed by \cite{Evensen2003}, the Ensemble Kalman Filter (EnKF) has become widely used in both data assimilation and inverse problems. Its popularity is largely due to its straightforward implementation and effectiveness even with smaller ensemble sizes, as shown in various studies (see \cite{Bergemann2009, Bergemann2010, Iglesias2014, Iglesias2016, iglesias2013ensemble, Li2009}). This feature makes EnKF particularly well-suited to high-dimensional problems, where other inversion methods may face computational challenges. EKI was first introduced by \cite{iglesias2013ensemble} as an extension of EnKF to inverse problems. EKI sets up artificial dynamics in which the target data acts as the observed signal, which is repeatedly integrated into the model estimates. A key advantage of EKI over traditional inversion methods is its derivative-free formulation, which allows it to function as a computationally efficient black-box method.

Research on EKI has increasingly focused on understanding convergence and stability. Much of this analysis was has been based on studying the continuous-time limit of EKI. Stability studies have been initiated in \cite{Tong2015, Tong2016} and convergence analysis to and in the continuous time limit can be found in \cite{Bloemker2019, Bloemker2021, bungert2021complete, schillings2017analysis, Schillingsnoisy}. These studies show that convergence in parameter space often requires regularisation; Tikhonov regularisation has been extensively examined in this context (see \cite{Iglesias_2016, Tong2020, Scherzer}). Recently, adaptive regularisation strategies were introduced to increase the robustness of stochastic EKI (see \cite{Iglesias_2021, Weissmann2022}). The mean-field limit of EKI has also been analysed, providing valuable insights into ensemble dynamics as the ensemble size grows large (see \cite{Stuart2022, Ding2020}).

To gain a foundational understanding of EKI, researchers have frequently studied its behaviour under linear forward operators.  \cite{iglesias2013ensemble} demonstrated that - in the linear, Gaussian setting - EKI converges to the maximum a posteriori (MAP) estimate in the mean-field limit (or, equally, minimises a Tikhonov-regularised objective). In general, however, the ensemble is not an approximation of the posterior in the Bayesian setting (see \cite{ErnstEtAl2015}). Further analysis by \cite{schillings2017analysis} 
showed that in the linear, noiseless regime, EKI experiences ensemble collapse, whereby particle positions converge to an estimate of the MAP within a specific subspace. Connections to other optimisation methods have been explored in \cite{iterKalman,Weissmann_2022gr}. The evolution of the EKI ensemble was further analysed by \cite{bungert2021complete}, who provided explicit expressions for the particle trajectories in the linear, noiseless regime. In the discrete setting, an analysis of the particle trajectories can be found in \cite{qian2024fundamentalsubspacesensemblekalman}.

In practical applications, the choice of initial ensemble is particularly important, especially since the number of particles in the ensemble is often far smaller than the dimension of the observation or state space. \cite{iglesias2013ensemble} showed that particles remain in the subspace spanned by the initial ensemble, thus regularising the inverse problem. This subspace is therefore a critical design parameter in EKI. When prior knowledge is represented by a Gaussian distribution, two strategies have been suggested for selecting initial ensemble members: random sampling from the Gaussian prior or utilising the terms in the Karhunen-Loève expansion that correspond to the largest eigenvalues. In the linear setting, \cite{ghattas2022non} provided theoretical support for the random sampling approach, showing that this technique approximates the posterior mean and covariance with error bounds that decay according to particle count and the `effective dimension' of the prior covariance. 
Certain techniques, such as covariance inflation and localisation, alter the empirical covariance in standard EKI. As a result, updates may leave the span of the initial ensemble, breaking the subspace property and its regularising effect (see \cite{Evensen2022}). 

This paper is concerned with optimising the subspace within which the ensemble evolves and the choice of initial ensemble within this subspace. This refinement of the initial subspace promises to improve the accuracy of EKI and its scalability to high-dimensional inverse problems. The continued development of subspace optimisation, alongside regularisation techniques, holds significant potential for improving EKI's performance in computationally challenging settings. The idea of subspace optimisation in EKI is conceptually related to the likelihood-informed subspace method, which reduces the dimensionality of high-dimensional probability distributions in Bayesian inference by  using the leading eigenvectors of a Gram matrix derived from the gradient of the log-likelihood function to identify a low-dimensional subspace where the target distribution diverges most from a reference distribution. We refer to \cite{LIS} for a detailed explanation of the likelihood informed subspace method.

\subsection{Main ideas and contributions of the paper}
In this work, we focus on improving the accuracy of EKI with small ensembles by optimising the subspace in which the algorithm operates. We aim to improve on the subspace arising from the standard ensemble initialisation, which might lead to low accuracy in the case of poor prior knowledge. By optimising the EKI subspace, we aim to maximise accuracy within a given computational budget (i.e. with a given number of particles). Our contributions are as follows:
\begin{itemize}
\item We derive closed-form expressions for the linear, noise-free particle dynamics. Though these new expressions are equivalent to those already derived by \cite{bungert2021complete}, their particular form facilitates the analysis of the choice of ensemble subspace.
\item We generalise a standard approach for generating the initial ensemble by (1) expanding the choice of subspaces in which to situate the initial ensemble (2) allowing the initial ensemble to consist of any set of linearly independent vectors in a chosen subspace. Using our analysis of the dynamics of the particles in the linear, noiseless regime, we evaluate the `goodness' of a given choice of subspace and linear combination.
\item We propose a scalable, greedy strategy to select good subspaces. We characterise families of `optimal' linear combinations given a choice of subspace and propose concrete methods for generating such linear combinations. These selection strategies incorporate prior, data and model knowledge.
\item Based on insights from the linear, noiseless case, we propose a strategy for adaptively resampling the EKI ensemble that can be applied to general forward operators (i.e. not just linear forward operators).
\item We numerically demonstrate the effectiveness of our approach by applying it to both linear and nonlinear example problems.
\end{itemize}

This paper is organised as follows. In Sections \ref{sec:IP} and \ref{sec:EKI}, we introduce the problem setting and EKI methodology. 
Section \ref{sec:subspace_optim} covers the analysis of the linear, noiseless particle dynamics, the characterisation of optimal initial ensembles in this setting, and strategies for constructing good ensembles in both the linear and nonlinear setting. Numerical experiments are presented in Section \ref{sec:NumExp} and conclusions are drawn in Section \ref{sec:concl}. Auxiliary results and proofs of the main theorems can be found in the appendices.
\subsection{Notation}
For any $n  \in \mathbb{N}^{+}$, we consider $\mathbb{R}^n$, equipped with either the standard inner product $\langle \cdot, \cdot \rangle$ and its norm $| \cdot |$ or the weighted inner product $\langle \cdot, \cdot \rangle_{\Gamma} := \langle \cdot, \Gamma^{-1} \cdot \rangle $ and its norm $| \cdot |_\Gamma$, where $\Gamma \in \mathbb{R}^{n \times n}$ is symmetric and positive definite. Further, $(\Omega, \mathcal{A}, \mathbb{P})$ denotes a probability space, $X$ denotes a separable Hilbert space and $\mathcal{B}X:=\mathcal{B}(X,|\cdot|)$ denotes the Borel $\sigma$-algebra on $X$ (or $\mathcal{B}X:=\mathcal{B}(X,|\cdot|_\Gamma)$). Introducing an additional space $\mathbb{R}^m$ with $m\in\mathbb{N}^{+}$, we denote the tensor product of vectors $x \in \mathbb{R}^n$ and $y \in \mathbb{R}^m$ by $x \otimes y := x y^\top$.
\section{Inverse Problem}\label{sec:IP}
In the inverse problem setting, our goal is to identify the unknown parameters \( u \in X \) from noisy observations:
\begin{equation} \label{eq_IP}
y = G(u) + \eta,
\end{equation}

where \( \eta \in Y \) represents additive observational noise, and \( G: X \rightarrow Y \) is the operator that solves the underlying forward problem, with $X$ being a separable Hilbert space and $Y=\mathbb{R}^m$ for some $m \in \mathbb{N}^{+}$.

Within the Bayesian framework, \( u \) and \( \eta \) are modelled as independent random variables - \( u: \Omega \rightarrow X \) and \( \eta: \Omega \rightarrow Y \) - with the assumption that \( u \perp \eta \). Assuming the noise \( \eta \) follows a Gaussian distribution \( \eta \sim \mathrm{N}(0, \Gamma) \) and the operator \( G \) is continuous, the posterior distribution \( \mu^y \) is given by

\begin{equation} \label{eq:bip}
\mathrm{d} \mu^y(u) = \frac{1}{Z} \exp\left(-\frac{1}{2} \lvert y - G(u) \rvert_{\Gamma}^2\right) \mathrm{d} \mu_0(u),
\end{equation}

where \( \mu_0 \) denotes the prior distribution on the unknown parameters, \( l(u) := \frac{1}{2} \lvert y - G(u) \rvert_{\Gamma}^2 \) represents the likelihood and \( Z = \mathbb{E}_{\mu_0} \left[ \exp\left(-\frac{1}{2} \lvert y - G(u) \rvert^2_{\Gamma} \right)\right] \) is the normalisation constant (which is assumed to be positive).

The primary focus of our study is the computation of the Maximum a Posteriori (MAP) estimate, which provides a point estimate for the unknown parameter. In a finite\\-dimensional context, this involves minimising the negative log density. Given a Gaussian prior on the unknown parameters \( u \sim \mathcal{N}(\mu, R) \), the objective function that we seek to minimise is given by

\begin{equation} \label{eq:pot_reg}
\Phi(u) := \frac{1}{2} \lvert y - G(u) \rvert_{\Gamma}^2 + \frac{1}{2} \lvert u -\mu\rvert^2_{R},
\end{equation}

which means the MAP estimate is the point that maximises the posterior density. For a generalisation of MAP points to the infinite-dimensional setting, we refer to \cite{Dashti_2013, Klebanov_2023}.

We note that, under the transformations $\tilde{u} := u - \mu$, $\tilde{y} := \Gamma^{-\frac12}y$, $\tilde{G}(u) := \Gamma^{-\frac12}G(u + \mu)$, we can equivalently minimise the following objective function: 

\begin{equation*}
\frac{1}{2} \lvert \tilde{y} - \tilde{G}(\tilde{u}) \rvert^2 + \frac{1}{2} \lvert \tilde{u}\rvert^2_{R},
\end{equation*}

As such, we assume, without loss of generality, that $\Gamma = I$ and $\mu = 0$ in our analysis in Section \ref{sec:subspace_optim}.

\section{Ensemble Kalman Inversion}\label{sec:EKI}

Our objective is to address the inverse problem specified in Equation \eqref{eq_IP} using the Ensemble Kalman Inversion (EKI) framework, with a particular focus on the continuous-time version of EKI that is described in \cite{schillings2017analysis}. We begin by outlining the derivation of EKI in discrete time and then discuss the transition to the continuous-time limit. Specifically, we employ a homotopy approach that approximates the posterior distribution $\mu$ defined in \eqref{eq:bip} through a sequence of probability measures $\mu_n$ for $n = 1, ..., N$:

\begin{equation*}
\mu_{n}(du) \propto \exp\bigl(-nhl(u)\bigr)\mu_0(du), \quad h=N^{-1},
\end{equation*}

where the target measure $\mu_N = \mu$ represents the posterior distribution of u given $y$. The update from $\mu_n$ to $\mu_{n+1}$ is defined by

\begin{equation*}
\mu_{n+1}(du) = \frac{1}{Z_n}\exp\bigl(-hl(u)\bigr)\mu_n(du),
\end{equation*}

with the normalisation constant $Z_n$ computed as

\[
Z_n = \int \exp(-hl(u))\mu_n(du).
\]

These intermediate measures can be interpreted as the repeated use of the observational data in a filtering context, where the observational noise is amplified by a factor of $1/h$. The measures are then approximated by an ensemble of J particles for some \( J \in \mathbb{N}^{+} \). The initial ensemble is defined as \( u_0 := \{u_0^{(i)}\}_{j = 1}^{J} \in X^{J} \).

Applying the Ensemble Kalman Filter (EnKF) to this problem results in the following iterative update:

\begin{equation} \label{eq:EnKFilter}
u_{n+1}^{(i)} = u_{n}^{(i)} + C^{up}(u_n)\left(C^{pp}(u_n) + h^{-1}\Gamma\right)^{-1}\left(y_{n+1}^{(i)} + \zeta_{n+1}^{(i)} - {G}(u_n^{(i)})\right), \quad j=1,\ldots,J,
\end{equation}

where $\zeta_{n+1}^{(i)} \sim \mathcal{N}(0, I_{k})$ i.i.d. The operators \( C^{pp} \) and \( C^{up} \) are defined as

\begin{align*}
C^{pp}(u) &= \frac{1}{J}\sum_{j=1}^J\left({G}(u^{(j)})-\overline{{G}}\right) \left({G}(u^{(j)})-\overline{{G}}\right)^{T}, \\
C^{up}(u)(\cdot) &= \frac{1}{J}\sum_{j=1}^J \langle{G}(u^{(j)})-\overline{{G}},\  \cdot \ \rangle \left(u^{(j)}-\overline{u}\right), \\
\overline{u} &= \frac{1}{J}\sum_{j=1}^J u^{(j)}, \quad \overline{{G}} = \frac{1}{J}\sum_{j=1}^J {G}(u^{(j)}).
\end{align*}

While this approach offers a practical solution to Bayesian inverse problems, it does not guarantee convergence to \( \mu_n \) as \( J \rightarrow \infty \) due to inherent approximation errors (see \cite{ErnstEtAl2015}). Our focus is on understanding the behaviour of the algorithm with a fixed number of particles, particularly when applied as a derivative-free optimiser to the minimisation problem \eqref{eq:pot_reg}.

In discussing EKI, it is important to highlight the invariant subspace property noted in \cite{iglesias2013ensemble}:

\begin{lemma}
\label{lem:invariant_subspace}

Let ${\mathcal S}$ denote the affine space $\mathcal S=\overline{u}_0+\spann \{u_0^{(i)}-\overline{u}_0, \quad i=1,\ldots, J\}$. Then $u_n^{(i)} \in {\cal S}$ for all $(n,i) \in \N^+ \times \{ 1, \ldots, J\}$.
\end{lemma}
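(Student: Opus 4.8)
The plan is to prove this by induction on $n$. The key observation is that the EnKF update in \eqref{eq:EnKFilter} adds to each particle $u_n^{(i)}$ a correction term of the form $C^{up}(u_n)(\cdot)$ applied to some vector in $Y$. By the definition of $C^{up}$, the output of this operator is always a linear combination of the centred particles $u_n^{(j)} - \overline{u}_n$, $j = 1, \ldots, J$. So if all the particles lie in the affine space $\mathcal{S}$ at step $n$, then each centred particle $u_n^{(j)} - \overline{u}_n$ lies in the linear space $V := \spann\{u_0^{(i)} - \overline{u}_0 : i = 1, \ldots, J\}$, the correction term therefore lies in $V$, and hence $u_{n+1}^{(i)} = u_n^{(i)} + (\text{something in } V)$ remains in $\mathcal{S} = \overline{u}_0 + V$.

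More concretely, I would first establish the base case: $u_0^{(i)} \in \mathcal{S}$ for all $i$, which is immediate since $u_0^{(i)} = \overline{u}_0 + (u_0^{(i)} - \overline{u}_0)$ and $u_0^{(i)} - \overline{u}_0 \in V$ by definition. For the inductive step, assume $u_n^{(i)} \in \mathcal{S}$ for all $i$. Write $u_n^{(i)} = \overline{u}_0 + v_n^{(i)}$ with $v_n^{(i)} \in V$. Then $\overline{u}_n = \overline{u}_0 + \frac{1}{J}\sum_j v_n^{(j)} \in \mathcal{S}$ as well, and $u_n^{(j)} - \overline{u}_n = v_n^{(j)} - \frac{1}{J}\sum_k v_n^{(k)} \in V$. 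Applying $C^{up}(u_n)$ to the vector $w_{n+1}^{(i)} := y_{n+1}^{(i)} + \zeta_{n+1}^{(i)} - G(u_n^{(i)}) \in Y$ gives $\frac{1}{J}\sum_j \langle G(u_n^{(j)}) - \overline{G}, w_{n+1}^{(i)}\rangle (u_n^{(j)} - \overline{u}_n)$, which is a finite linear combination of vectors in $V$, hence in $V$. Therefore $u_{n+1}^{(i)} = u_n^{(i)} + C^{up}(u_n)(\cdots) \in \overline{u}_0 + V = \mathcal{S}$, closing the induction.

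I do not anticipate any serious obstacle: the only point requiring mild care is bookkeeping the centred-particle subspace $V$ and noting that $V$ (and hence $\mathcal{S}$) is fixed across iterations — it is always defined via the \emph{initial} ensemble, even though the corrections at step $n$ are expressed in terms of the step-$n$ centred particles. The argument hinges entirely on the structural fact that $C^{up}(u)$ has range contained in $\spann\{u^{(j)} - \overline{u}\}$, which in turn is controlled by the inductive hypothesis. One could also phrase the whole thing without induction by observing that the map $u_n \mapsto u_{n+1}$ preserves $\mathcal{S}^J$, but the inductive formulation is cleanest. Note the statement as given quantifies over $n \in \N^+$; the base case $n=0$ (or equivalently $n=1$ after one update) together with the inductive step covers all required indices.
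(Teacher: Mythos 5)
Your proof is correct and follows the standard inductive argument for the invariant subspace property (the paper itself states this lemma without proof, citing \cite{iglesias2013ensemble}, where essentially the same induction is used): the range of $C^{up}(u_n)$ lies in $\spann\{u_n^{(j)}-\overline{u}_n\}$, which by the inductive hypothesis is contained in $V=\spann\{u_0^{(i)}-\overline{u}_0\}$, so each update moves particles only within $\mathcal S=\overline{u}_0+V$. One cosmetic slip: in the update \eqref{eq:EnKFilter} the operator $C^{up}(u_n)$ is applied to $\bigl(C^{pp}(u_n)+h^{-1}\Gamma\bigr)^{-1}\bigl(y_{n+1}^{(i)}+\zeta_{n+1}^{(i)}-G(u_n^{(i)})\bigr)$ rather than to $y_{n+1}^{(i)}+\zeta_{n+1}^{(i)}-G(u_n^{(i)})$ itself, but since your argument uses only that $C^{up}(u_n)$ acts on \emph{some} vector in $Y$, this does not affect the conclusion.
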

We note that the subspace property allows us to reformulate the possibly infinite dimensional problem as a finite dimensional one by restricting the iterations to the finite dimensional subspace spanned by the initial ensemble. In our analysis, we assume that the parameter space $X$ is finite dimensional, i.e. $X=\mathbb R^n$ for some $n\in \mathbb N^{+}$.

\subsection{Continuous-Time Limit}

We investigate the continuous-time limit of EKI as the parameter \( h \) in the incremental Bayesian formulation approaches zero. For a detailed and rigorous derivation of these limits, we refer to \cite{bloemker2018strongly, Bloemker2021, lange2021derivation}. The EKI update step \eqref{eq:EnKFilter} can be interpreted as a time-stepping scheme. As \( h \to 0 \), this formulation converges to a tamed discretisation of the following coupled stochastic differential equations:

\begin{equation}\label{eq:sde}
\frac{d u^{(i)}}{dt} = C^{up}(u)\Gamma^{-1}\left(y - {G}(u^{(i)})\right) + C^{up}(u^{(i)})\Gamma^{-1}\sqrt{\Gamma} \frac{dW^{(i)}}{dt},
\end{equation}

and more explicitly:

\begin{equation}\label{eq:sde2}
\frac{d u^{(i)}}{dt} = \frac{1}{J}\sum_{k=1}^J \bigl\langle {G}(u^{(k)}) - \bar{{G}}, y - {G}(u^{(i)}) + \sqrt{\Gamma} \frac{dW^{(i)}}{dt}\bigr\rangle_{\Gamma} (u^{(k)} - \bar{u}),
\end{equation}

where the independent cylindrical Brownian motions \( W^{(i)} \) on \( X \) result from the perturbed observations in the original formulation. From \eqref{eq:sde2}, it is clear that the subspace property remains intact in continuous time. By setting the observation perturbations to zero in each iteration, the continuous-time limit simplifies to a deterministic system:

\begin{equation*}\label{eq:ode}
\frac{d u^{(i)}}{dt} = C^{up}(u)\Gamma^{-1}\left(y - {G}(u^{(i)})\right).
\end{equation*}

This continuous-time formulation serves as the foundation for further analysis regarding the optimal choice of the initial ensemble.

As indicated in Section \ref{sec:IP}, we assume, without loss of generality, that $\Gamma = I$ in the following analysis.

\section{Adaptive Selection of the Initial Ensemble}\label{sec:subspace_optim}

Due to the subspace property (Lemma \ref{lem:invariant_subspace}), the EKI approximations are constrained to the subspace spanned by the initial ensemble. This section, therefore, focuses on strategies for optimising the selection of this subspace.

We begin by analysing the case where the forward operator, $A: X \to Y$, is linear and later extend our analysis to the nonlinear case.

In the linear case, the dynamics of the particle system $\{u^{(i)}(t)\}_{i = 1}^{J} \in \R^{n}$ are governed by the following set of coupled ordinary differential equations (ODEs):
\begin{align}\label{eq:particle_ODE}
\frac{d u^{(i)}(t)}{dt} &= -C_{t}A^{T}(Au^{(i)}(t) - y), \nonumber\\
C_{t} &= \frac{1}{J}\sum_{j=1}^{J}(u^{(j)} - \bar{u})(u^{(j)} - \bar{u})^{T}.
\end{align}

where $C_{t} \in \mathbb{R}^{n \times n}$ is the empirical covariance matrix of the particle system. It is noteworthy that, in the linear case, EKI can be interpreted as a preconditioned gradient flow (see \cite{schillings2017analysis}).

By defining $\mathcal{E}_{t} := [u^{(1)}(t) - \bar{u}(t), \ldots , u^{(J)}(t) - \bar{u}(t)] \in \mathbb{R}^{n \times J}$, we express the covariance matrix as $C_{t} = \frac{1}{J}\mathcal{E}_{t}\mathcal{E}_{t}^{T} \in \mathbb{R}^{n \times n}$. If we further define $U_{t} = [u^{(1)}(t), \ldots , u^{(J)}(t)] \in \mathbb{R}^{n \times J}$ and $Y = [y, \ldots , y] \in \mathbb{R}^{m \times J}$, the system can be rewritten in matrix form as

\begin{equation*}\label{matrix_ODE}
\frac{d}{dt} U_{t} = -C_{t}A^{T}(AU_{t} - Y) = -\frac{1}{J}\mathcal{E}_{t}(A\mathcal{E}_{t})^{T}(AU_{t} - Y).
\end{equation*}

\cite{bungert2021complete} provide a comprehensive analysis of this system of ODEs, culminating in explicit expressions for the particle trajectories. Building on this foundation, we derive alternative expressions for the particle trajectories that we believe offer a different perspective on the system's dynamics. The key difference between these perspectives is that the solutions presented in \cite{bungert2021complete} rely on the diagonal decomposition $C_{0}A^{T}A = SDS^{-1}$, whereas our solutions rely on the diagonal decomposition of the symmetric matrix $AC_{0}A^{T} =: \mathcal{U}\Sigma\mathcal{U}^{T}$ or the compact singular value decomposition $A\mathcal{E}_{0} =: \sqrt{J}\mathcal{U}\Sigma^{\frac{1}{2}}\mathcal{V}^{T}$.

\subsection{Analysis of Particle Dynamics}
In this Subsection, we analyse the dynamics of an ensembles moving under deterministic EKI with a linear forward operator, i.e. particles moving according to ODEs \eqref{eq:particle_ODE}. By focusing on the linear forward operator without noise, we derive explicit solution to ODEs \eqref{eq:particle_ODE} and characterise the behaviour of particles as they evolve over time. In particular, we see how 
the choice of the initial ensemble affects the accuracy of the EKI in the long-term. This analysis is also the starting point for understanding EKI methods in the nonlinear setting. We begin by stating the necessary assumptions and definitions used throughout our analysis.
\begin{assumption}\phantom{~\\}
\begin{enumerate}[label={\rm (A2.\arabic*)},align=right,leftmargin=2.5\parindent]
\item The forward operator $A: X \to Y$ is linear , i.e. we identify the operator with a matrix $A \in \mathbb{R}^{k \times n}$.\label{eq:A1}
\item We define the initial ensemble $U_{0} := [u^{(1)}(0), \ldots, u^{(J)}(0)] \in \mathbb{R}^{n \times J}$, the deviations $\mathcal{E}_{0} := [u^{(1)}(0) - \bar{u}(0), \ldots, u^{(J)}(0) - \bar{u}(0)] \in \mathbb{R}^{n \times J}$ and the empirical covariance $C_{0} := \frac{1}{J} \mathcal{E}_{0}\mathcal{E}_{0}^{T} = \frac{1}{J}\sum_{j = 1}^{J}(u^{(j)}(0) - \bar{u}(0))(u^{(j)}(0) - \bar{u}(0))^{T} \in \mathbb{R}^{n \times n}$.\label{eq:A2}
\item The matrix $AC_{0}A^{T}$ is of rank $r \le k$, $AC_{0}A^{T} = \mathcal{U} \Sigma \mathcal{U}^{T}$ is an eigen-decomposition and $A\mathcal{E}_{0} = \sqrt{J} \mathcal{U}\Sigma^{\frac{1}{2}}\mathcal{V}^{T}$ is a corresponding compact singular value decomposition ($\mathcal{U} \in \mathbb{R}^{m \times r}$, $\Sigma \in \mathbb{R}^{r \times r}$, $\mathcal{V} \in \mathbb{R}^{J \times r}$, $\Sigma \succ 0$).\label{eq:A3}
\end{enumerate}
\end{assumption}

\begin{theorem}\label{thm:dynamics_EKI}
Under Assumptions {\rm \ref{eq:A1}}--{\rm \ref{eq:A3}}, the initial value problem for deterministic EKI
\begin{align}\label{eq:IVPEKI}
\begin{split}
\frac{du^{(i)}(t)}{dt} &= -C_{t}A^{T}(Au^{(i)}(t) - y), \quad t > 0,\\
u^{(i)}(0) &= u_0^{(i)},
\end{split}
\end{align}
with $C_{t} := \frac{1}{J}\sum_{j = 1}^{J}(u^{(j)}(t) - \bar{u}(t))(u^{(j)}(t) - \bar{u}(t))^{T}$ and $ \bar{u}(t) = \frac{1}{J}\sum_{j = 1}^{J}u^{(j)}(t)$ for $t \ge 0$
has the unique solution given by
\begin{align*}
u^{(i)}(t) &= u^{(i)}(0) + (AC_{0})^{T} \mathcal{U}\Sigma^{-1}((I_{r} + 2 \Sigma t)^{-\frac{1}{2}} - I_{r})\mathcal{U}^{T}(Au^{(i)}(0) - y)\\
&= u^{(i)}(0) + \frac{1}{\sqrt{J}}U_{0}\mathcal{V}\Sigma^{-\frac{1}{2}}((I_{r} + 2 \Sigma t)^{-\frac{1}{2}} - I_{r})\mathcal{U}^{T}(Au^{(i)}(0) - y), \quad t \ge 0.
\end{align*}
\end{theorem}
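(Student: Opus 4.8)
The right-hand side of \eqref{eq:IVPEKI} is a polynomial --- hence smooth --- vector field on $(\R^{n})^{J}$, so Picard--Lindel\"of gives a unique maximal solution on some interval $[0,T)$; the plan is to derive its explicit form on $[0,T)$ and then observe that the resulting expression is defined and solves the system on all of $[0,\infty)$, which upgrades existence to global and pins down uniqueness. The guiding idea is that all particles share the same affine drift $u\mapsto-C_tA^{T}(Au-y)$, so the deviations, the residuals and the cross-covariance obey \emph{closed} matrix ODEs that can be solved one after another and assembled by a single quadrature.

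Subtracting the mean equation $\dot{\bar u}=-C_tA^{T}(A\bar u-y)$ from \eqref{eq:IVPEKI} gives $\dot{\mathcal E}_t=-C_tA^{T}A\,\mathcal E_t$. Writing $D_t:=A\mathcal E_t$ and $M_t:=AC_tA^{T}=\tfrac1J D_tD_t^{T}$ and differentiating, one obtains the self-contained matrix Riccati equation $\dot M_t=-2M_t^{2}$ with $M_0=AC_0A^{T}=\mathcal U\Sigma\mathcal U^{T}$. Using $\mathcal U^{T}\mathcal U=I_r$, a direct differentiation verifies that $M_t=\mathcal U\Sigma(I_r+2t\Sigma)^{-1}\mathcal U^{T}$ solves this equation, so by uniqueness it \emph{is} the solution; in particular it is defined for every $t\ge0$.

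Now treat $M_t$ as a known function. The residual $w^{(i)}_t:=Au^{(i)}_t-y$ satisfies the linear equation $\dot w^{(i)}_t=-M_tw^{(i)}_t$, and the cross term $P_t:=C_tA^{T}$ satisfies $\dot P_t=-2P_tM_t$ (derived from $\dot{\mathcal E}_t$ exactly as $\dot M_t$ was). Since the matrices $M_s$ pairwise commute, both integrate by the scalar functional calculus; with $\int_0^{t}M_s\,ds=\mathcal U\cdot\tfrac12\log(I_r+2t\Sigma)\cdot\mathcal U^{T}$ one gets
\begin{equation*}
w^{(i)}_t=\bigl(I_m-\mathcal U\mathcal U^{T}+\mathcal U(I_r+2t\Sigma)^{-1/2}\mathcal U^{T}\bigr)w^{(i)}_0,\qquad
P_t=P_0\bigl(I_m-\mathcal U\mathcal U^{T}+\mathcal U(I_r+2t\Sigma)^{-1}\mathcal U^{T}\bigr),
\end{equation*}
each checked by direct differentiation together with the initial conditions at $t=0$.

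Finally, integrate $u^{(i)}_t=u^{(i)}_0-\int_0^{t}P_sw^{(i)}_s\,ds$. The crux is the algebraic identity $P_0(I_m-\mathcal U\mathcal U^{T})=C_0A^{T}(I_m-\mathcal U\mathcal U^{T})=0$, which holds because $\mathrm{range}(AC_0)=\mathrm{range}(A\mathcal E_0\mathcal E_0^{T})\subseteq\mathrm{range}(A\mathcal E_0)=\mathrm{range}(AC_0A^{T})=\mathrm{range}(\mathcal U)$; without it a spurious term linear in $t$ would survive. Using this identity together with $\mathcal U^{T}\mathcal U=I_r$ collapses the integrand to $P_sw^{(i)}_s=C_0A^{T}\mathcal U(I_r+2s\Sigma)^{-3/2}\mathcal U^{T}w^{(i)}_0$, and $\int_0^{t}(I_r+2s\Sigma)^{-3/2}\,ds=\Sigma^{-1}\bigl(I_r-(I_r+2t\Sigma)^{-1/2}\bigr)$ yields precisely the first claimed formula (recall $(AC_0)^{T}=C_0A^{T}$). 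The second formula follows by inserting the compact SVD $A\mathcal E_0=\sqrt J\,\mathcal U\Sigma^{1/2}\mathcal V^{T}$ into $C_0A^{T}=\tfrac1J\mathcal E_0(A\mathcal E_0)^{T}$ and replacing $\mathcal E_0$ by $U_0$ via $U_0\mathcal V=\mathcal E_0\mathcal V$ --- valid since $\mathcal E_0\mathbf 1=0$ (with $\mathbf 1\in\R^{J}$ the all-ones vector) places $\mathbf 1$ in $\ker(A\mathcal E_0)=\mathrm{range}(\mathcal V)^{\perp}$. Since the resulting expression is smooth on $[0,\infty)$ and solves \eqref{eq:IVPEKI} there, it is the unique global solution. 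I expect the main obstacle to be the matrix functional-calculus bookkeeping and, above all, spotting and justifying the range identity that removes the secular term.
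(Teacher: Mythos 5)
Your proof is correct and takes essentially the same route as the paper: the paper's proof delegates to Theorem \ref{thm:general_dynamics_thm} with $\alpha=2$, whose derivation is exactly your chain (Riccati equation for $AC_tA^{T}$, linear equations for the residuals and for the cross term $C_tA^{T}$ solved by functional calculus, then a final quadrature), and your identity $\mathcal{E}_0\mathcal{V}=U_0\mathcal{V}$ via $\mathcal{V}^{T}1_J=0$ is precisely Lemma \ref{lemma:EV_UV}, which gives the second formula. The only difference is cosmetic: you make explicit the range inclusion $\mathrm{range}(AC_0)\subseteq\mathrm{range}(\mathcal{U})$ that removes the secular term (the paper uses it implicitly in the final integration), and you add the Picard--Lindel\"of framing for uniqueness.
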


\begin{proof}
From the ODE for $u^{(i)}(t)$, we have
\begin{equation*}
\frac{d}{dt}C_{t} = -2C_{t}A^{T}AC_{t}, \quad t > 0.
\end{equation*}
The first equality follows directly from Theorem \ref{thm:general_dynamics_thm} with $\alpha = 2$. The second equality follows from Lemma \ref{lemma:EV_UV}, which shows that $(AC_{0})^{T} \mathcal{U}\Sigma^{-1} = \frac{1}{\sqrt{J}}\mathcal{E}_{0}\mathcal{V}\Sigma^{-\frac{1}{2}} = \frac{1}{\sqrt{J}}U_{0}\mathcal{V}\Sigma^{-\frac{1}{2}}$.
\end{proof}

Theorem \ref{thm:dynamics_EKI} allows us to quantify the residual in the image space as follows:

\begin{corollary}\label{terminal_residual}
Under Assumptions {\rm \ref{eq:A1}}--{\rm \ref{eq:A3}}, the residuals in the image space are given by
\begin{equation*}
Au^{(i)}(t) - y = (I_{m} - \mathcal{U}\mathcal{U}^{T} + \mathcal{U}(I_{r} + 2\Sigma t)^{-\frac{1}{2}}\mathcal{U}^{T})(Au^{(i)}(0) - y),
\end{equation*}
and therefore
\begin{equation*}\label{eq:terminal projection}
\lim _{t\to \infty}  Au^{(i)}(t) - y = \text{Proj}_{\text{\mbox{ker}}(AC_{0}A^{T})}(Au^{(i)}(0) - y).
\end{equation*}
\end{corollary}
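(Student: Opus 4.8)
The plan is to derive the closed-form image-space residual by left-multiplying the trajectory formula of Theorem~\ref{thm:dynamics_EKI} by $A$, and then to read off the limit from the decay of $(I_r + 2\Sigma t)^{-\frac12}$ as $t\to\infty$.

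First I would take the first expression for $u^{(i)}(t)$ in Theorem~\ref{thm:dynamics_EKI}, left-multiply by $A$, and subtract $y$, obtaining
\[
Au^{(i)}(t) - y = (Au^{(i)}(0) - y) + A(AC_0)^{T}\mathcal{U}\Sigma^{-1}\bigl((I_r + 2\Sigma t)^{-\frac12} - I_r\bigr)\mathcal{U}^{T}(Au^{(i)}(0) - y).
\]
The crucial simplification is the identity $A(AC_0)^{T}\mathcal{U}\Sigma^{-1} = \mathcal{U}$: since $C_0$ is symmetric, $A(AC_0)^{T} = AC_0A^{T} = \mathcal{U}\Sigma\mathcal{U}^{T}$ by Assumption~\ref{eq:A3}, and then $\mathcal{U}\Sigma\mathcal{U}^{T}\mathcal{U}\Sigma^{-1} = \mathcal{U}$ because $\mathcal{U}^{T}\mathcal{U} = I_r$ and $\Sigma\Sigma^{-1} = I_r$. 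Substituting this and collecting the common right factor $(Au^{(i)}(0) - y)$ gives
\[
Au^{(i)}(t) - y = \bigl(I_m + \mathcal{U}((I_r + 2\Sigma t)^{-\frac12} - I_r)\mathcal{U}^{T}\bigr)(Au^{(i)}(0) - y) = \bigl(I_m - \mathcal{U}\mathcal{U}^{T} + \mathcal{U}(I_r + 2\Sigma t)^{-\frac12}\mathcal{U}^{T}\bigr)(Au^{(i)}(0) - y),
\]
which is the first assertion of the corollary.

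For the limit, I would note that $\Sigma \succ 0$ is $r\times r$, so writing $\Sigma = Q\,\mathrm{diag}(\sigma_1,\dots,\sigma_r)\,Q^{T}$ with each $\sigma_j > 0$ we get $(I_r + 2\Sigma t)^{-\frac12} = Q\,\mathrm{diag}\bigl((1 + 2\sigma_j t)^{-\frac12}\bigr)Q^{T} \to 0$ as $t\to\infty$; hence $Au^{(i)}(t) - y \to (I_m - \mathcal{U}\mathcal{U}^{T})(Au^{(i)}(0) - y)$. It then remains to identify $I_m - \mathcal{U}\mathcal{U}^{T}$ with the orthogonal projection onto $\ker(AC_0A^{T})$: the columns of $\mathcal{U}$ form an orthonormal basis of $\mathrm{range}(AC_0A^{T})$, so $\mathcal{U}\mathcal{U}^{T} = \mathrm{Proj}_{\mathrm{range}(AC_0A^{T})}$, and since $AC_0A^{T}$ is symmetric one has the orthogonal splitting $\R^{m} = \mathrm{range}(AC_0A^{T}) \oplus \ker(AC_0A^{T})$, whence $I_m - \mathcal{U}\mathcal{U}^{T} = \mathrm{Proj}_{\ker(AC_0A^{T})}$. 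This gives the claimed limit.

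The computation is essentially routine; the only point I expect to need care is the bookkeeping between the two decompositions in Assumption~\ref{eq:A3}, i.e.\ making sure the matrix $\mathcal{U}$ from the eigen-decomposition of $AC_0A^{T}$ is exactly the left-singular-vector matrix of $A\mathcal{E}_0$, so that $\mathcal{U}^{T}\mathcal{U} = I_r$ (valid even when $r < m$) and the cancellation $AC_0A^{T}\mathcal{U}\Sigma^{-1} = \mathcal{U}$ is legitimate. I do not anticipate any genuine difficulty beyond this.
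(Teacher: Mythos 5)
Your proposal is correct and matches the paper's intent: the corollary is meant to follow directly from Theorem \ref{thm:dynamics_EKI}, and your key cancellation $A(AC_0)^T\mathcal{U}\Sigma^{-1} = \mathcal{U}\Sigma\mathcal{U}^T\mathcal{U}\Sigma^{-1} = \mathcal{U}$ together with $(I_r+2\Sigma t)^{-\frac12}\to 0$ and the identification of $I_m-\mathcal{U}\mathcal{U}^T$ with $\mathrm{Proj}_{\ker(AC_0A^T)}$ is exactly the needed argument. The paper's appendix obtains the same residual formula by instead solving the ODE $\frac{d}{dt}(Au^{(i)}(t)-y) = -AC_tA^T(Au^{(i)}(t)-y)$ via the matrix exponential, but this is the same computation in a different order, so there is no substantive difference.
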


The next result charecterises the deviations of each particle from the empirical mean.

\begin{corollary}
Under Assumptions {\rm \ref{eq:A1}}--{\rm \ref{eq:A3}}, the deviations of the particles from their mean are given by
\begin{align*}
\mathcal{E}_{t} &= \mathcal{E}_{0} + (AC_{0})^{T}\mathcal{U}\Sigma^{-1}((I_{r} + 2 \Sigma t)^{-\frac{1}{2}} - I_{r})\mathcal{U}^{T}A\mathcal{E}_{0}\\
&= \mathcal{E}_{0}(I_{J} - \mathcal{V}\mathcal{V}^{T} + \mathcal{V}(I_{r} + 2 \Sigma t)^{-\frac{1}{2}} \mathcal{V}^{T}),
\end{align*}
and so
\begin{align*}
\lim _{t\to \infty}  \mathcal{E}_{t} &= \mathcal{E}_{0} - (AC_{0})^{T}\mathcal{U}\Sigma^{-1}\mathcal{U}^{T}A\mathcal{E}_{0}\\
&= \mathcal{E}_{0}(I_{J} - \mathcal{V}\mathcal{V}^{T}),
\end{align*}
\end{corollary}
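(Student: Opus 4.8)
The plan is to read off $\mathcal{E}_t$ directly from the closed-form particle trajectories in Theorem~\ref{thm:dynamics_EKI}, exploiting that each trajectory is an affine function of its \emph{own} initial condition with a coefficient matrix that is independent of $i$. Write the trajectory compactly as $u^{(i)}(t) = u^{(i)}(0) + M_t\big(Au^{(i)}(0) - y\big)$ with $M_t := (AC_{0})^{T}\mathcal{U}\Sigma^{-1}\big((I_{r} + 2\Sigma t)^{-1/2} - I_{r}\big)\mathcal{U}^{T}$. Averaging over $i = 1,\dots,J$ (the average commutes with the fixed linear map $M_t A$ and with the constant vector $-M_t y$) gives $\bar u(t) = \bar u(0) + M_t\big(A\bar u(0) - y\big)$, and subtracting this from the trajectory makes the $y$-terms cancel, leaving $u^{(i)}(t) - \bar u(t) = \big(I_{n} + M_t A\big)\big(u^{(i)}(0) - \bar u(0)\big)$, i.e. $\mathcal{E}_t = \mathcal{E}_0 + M_t A\mathcal{E}_0$. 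This is the first displayed identity.

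Next I would pass to the $\mathcal{V}$-form using the compact SVD $A\mathcal{E}_0 = \sqrt{J}\,\mathcal{U}\Sigma^{1/2}\mathcal{V}^{T}$ from \ref{eq:A3} together with the identity $(AC_{0})^{T}\mathcal{U}\Sigma^{-1} = \tfrac{1}{\sqrt J}\mathcal{E}_0\mathcal{V}\Sigma^{-1/2}$ established in Lemma~\ref{lemma:EV_UV} (and used already in the proof of Theorem~\ref{thm:dynamics_EKI}). Substituting both into $M_t A\mathcal{E}_0$, the inner product $\mathcal{U}^{T}\mathcal{U} = I_{r}$ collapses, the $\sqrt J$ factors cancel, and since $\Sigma^{-1/2}$, $\Sigma^{1/2}$ and $(I_{r}+2\Sigma t)^{-1/2}$ are all functions of the same symmetric positive-definite matrix $\Sigma$ they commute, so that $\Sigma^{-1/2}\big((I_{r}+2\Sigma t)^{-1/2}-I_{r}\big)\Sigma^{1/2} = (I_{r}+2\Sigma t)^{-1/2}-I_{r}$. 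This yields $M_t A\mathcal{E}_0 = \mathcal{E}_0\mathcal{V}\big((I_{r}+2\Sigma t)^{-1/2} - I_{r}\big)\mathcal{V}^{T}$; adding $\mathcal{E}_0$ and regrouping gives $\mathcal{E}_t = \mathcal{E}_0\big(I_{J} - \mathcal{V}\mathcal{V}^{T} + \mathcal{V}(I_{r}+2\Sigma t)^{-1/2}\mathcal{V}^{T}\big)$.

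For the limit, let $t\to\infty$: since $\Sigma\succ 0$ one has $(I_{r}+2\Sigma t)^{-1/2}\to 0$, so the $\mathcal{V}$-form tends to $\mathcal{E}_0(I_{J} - \mathcal{V}\mathcal{V}^{T})$; equivalently, taking $t\to\infty$ in the first form replaces $(I_{r}+2\Sigma t)^{-1/2}-I_{r}$ by $-I_{r}$, giving $\mathcal{E}_0 - (AC_{0})^{T}\mathcal{U}\Sigma^{-1}\mathcal{U}^{T}A\mathcal{E}_0$. I do not expect a genuine obstacle here: the argument is bookkeeping on top of Theorem~\ref{thm:dynamics_EKI} and Lemma~\ref{lemma:EV_UV}. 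The only two points needing care are the cancellation of the $y$-terms when forming deviations (which forces one to first check that $\bar u(t)$ has the same affine form as the individual trajectories) and the commutation of the functions of $\Sigma$ used to simplify the middle factor.
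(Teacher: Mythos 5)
Your argument is correct and is exactly the derivation the paper intends: the corollary is a direct consequence of Theorem~\ref{thm:dynamics_EKI}, obtained by averaging the affine trajectory formula, subtracting to form $\mathcal{E}_t = \mathcal{E}_0 + M_tA\mathcal{E}_0$, and then simplifying with the compact SVD $A\mathcal{E}_0 = \sqrt{J}\,\mathcal{U}\Sigma^{1/2}\mathcal{V}^{T}$ and the commuting functions of $\Sigma$, with the limit following from $\Sigma \succ 0$. The only cosmetic remark is that the identity $(AC_0)^{T}\mathcal{U}\Sigma^{-1} = \tfrac{1}{\sqrt{J}}\mathcal{E}_0\mathcal{V}\Sigma^{-1/2}$ follows directly from the SVD in \ref{eq:A3} (Lemma~\ref{lemma:EV_UV} proper is only needed to replace $\mathcal{E}_0\mathcal{V}$ by $U_0\mathcal{V}$, which you do not use here), so your citation is slightly broader than necessary but harmless.
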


The empirical covariance can then be quantified as follows.

\begin{corollary}
Under Assumptions {\rm \ref{eq:A1}}--{\rm \ref{eq:A3}} and with $C_{0} = C_{L}C_{R}$ being any desired decomposition with $C_{L} \in \mathbb{R}^{n \times p}$, $C_{R} \in \mathbb{R}^{p \times n}$ for some $p \in \mathbb{N}^{+}$, the empirical covariance is given by
\begin{align*}
C_{t} &= C_{0} + C_{0}A^{T}\mathcal{U}\Sigma^{-1}((I_{r} + 2 \Sigma t)^{-1} - I_{r})\mathcal{U}^{T}AC_{0}\\
&= \frac{1}{J}\mathcal{E}_{0}(I_{J} - \mathcal{V}\mathcal{V}^{T} + \mathcal{V}(I_{r} + 2 \Sigma t)^{-1}\mathcal{V}^{T})\mathcal{E}_{0}^{T}\\
&= C_{L}(I_{n} + 2 C_{R}A^{T}AC_{L} t)^{-1}C_{R} \quad \text{for any decomposition $C_{0} = C_{L}C_{R}$},
\end{align*}
and so
\begin{align*}
\lim _{t\to \infty}  C_{t} &= C_{0} - C_{0}A^{T}\mathcal{U}\Sigma^{-1}\mathcal{U}^{T}AC_{0}\\
&= \frac{1}{J}\mathcal{E}_{0}(I_{J} - \mathcal{V}\mathcal{V}^{T})\mathcal{E}_{0}^{T}.
\end{align*}
\begin{proof}
The first two of the above expressions for $C_{t}$ are direct corollaries of \ref{thm:dynamics_EKI}. The last expression for $C_{t}$ is a direct corollary of Theorem \ref{thm:general_dynamics_thm} after noting that
\begin{equation*}
\frac{d}{dt}C_{t} = -2C_{t}A^{T}AC_{t}.
\end{equation*}
\end{proof}
\end{corollary}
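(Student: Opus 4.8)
The plan is to read off the three closed forms for $C_t$ from Theorem~\ref{thm:dynamics_EKI} (via the deviation matrix $\mathcal{E}_t$) and from the Riccati identity $\tfrac{d}{dt}C_t = -2C_tA^TAC_t$ already derived in its proof, and then to let $t\to\infty$. For the first two displayed expressions I would start from $C_t = \tfrac1J\mathcal{E}_t\mathcal{E}_t^T$ together with the formula $\mathcal{E}_t = \mathcal{E}_0\big(I_J - \mathcal{V}\mathcal{V}^T + \mathcal{V}(I_r + 2\Sigma t)^{-\frac12}\mathcal{V}^T\big)$ from the preceding corollary, and square the bracketed factor. Writing $P := \mathcal{V}\mathcal{V}^T$ and $M := (I_r + 2\Sigma t)^{-\frac12}$ and using $\mathcal{V}^T\mathcal{V} = I_r$ (so $P^2 = P$ and $(I_J - P)\mathcal{V} = 0$), every cross term drops and $\big(I_J - P + \mathcal{V}M\mathcal{V}^T\big)^2 = I_J - P + \mathcal{V}M^2\mathcal{V}^T$, which is exactly $I_J - \mathcal{V}\mathcal{V}^T + \mathcal{V}(I_r + 2\Sigma t)^{-1}\mathcal{V}^T$; this gives the second formula. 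To get the first, I would translate back to the image-space variables with Lemma~\ref{lemma:EV_UV}, which yields $\tfrac1{\sqrt J}\mathcal{E}_0\mathcal{V} = C_0A^T\mathcal{U}\Sigma^{-\frac12}$; substituting this into the three summands and using that functions of the diagonal matrix $\Sigma$ commute converts $\tfrac1J\mathcal{E}_0\mathcal{E}_0^T$ to $C_0$, $\tfrac1J\mathcal{E}_0\mathcal{V}\mathcal{V}^T\mathcal{E}_0^T$ to $C_0A^T\mathcal{U}\Sigma^{-1}\mathcal{U}^TAC_0$, and $\tfrac1J\mathcal{E}_0\mathcal{V}(I_r+2\Sigma t)^{-1}\mathcal{V}^T\mathcal{E}_0^T$ to $C_0A^T\mathcal{U}\Sigma^{-1}(I_r+2\Sigma t)^{-1}\mathcal{U}^TAC_0$; collecting the three gives $C_0 + C_0A^T\mathcal{U}\Sigma^{-1}\big((I_r+2\Sigma t)^{-1} - I_r\big)\mathcal{U}^TAC_0$.

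For the expression in terms of an arbitrary factorisation $C_0 = C_LC_R$, I would argue by uniqueness of solutions of the matrix Riccati initial value problem $\dot C_t = -2C_tA^TAC_t$, $C_{t=0} = C_LC_R$. Put $\Phi(t) := (I_p + 2t\,C_RA^TAC_L)^{-1}$; this is well defined for every $t\ge 0$ since the nonzero eigenvalues of $C_RA^TAC_L$ coincide with those of $C_LC_RA^TA = C_0A^TA$, a product of symmetric positive-semidefinite matrices and hence with nonnegative spectrum, so $I_p + 2t\,C_RA^TAC_L$ has spectrum in $[1,\infty)$. Differentiating the matrix inverse gives $\dot\Phi = -2\,\Phi\, C_RA^TAC_L\,\Phi$, hence $\tfrac{d}{dt}\big(C_L\Phi C_R\big) = -2\big(C_L\Phi C_R\big)A^TA\big(C_L\Phi C_R\big)$ with $C_L\Phi(0)C_R = C_LC_R = C_0$; since $C_t$ solves the same well-posed ODE, $C_t = C_L(I_p + 2t\,C_RA^TAC_L)^{-1}C_R$. (This is the content of Theorem~\ref{thm:general_dynamics_thm}, which may alternatively be quoted directly; note that dimensional consistency forces the identity appearing there to be $I_p$ rather than $I_n$.) The limits then follow at once: because $\Sigma \succ 0$ we have $(I_r + 2\Sigma t)^{-1}\to 0$ as $t\to\infty$, so the first two formulas tend to $C_0 - C_0A^T\mathcal{U}\Sigma^{-1}\mathcal{U}^TAC_0$ and $\tfrac1J\mathcal{E}_0(I_J - \mathcal{V}\mathcal{V}^T)\mathcal{E}_0^T$ respectively, and these agree via the identity $\tfrac1J\mathcal{E}_0\mathcal{V}\mathcal{V}^T\mathcal{E}_0^T = C_0A^T\mathcal{U}\Sigma^{-1}\mathcal{U}^TAC_0$ used above.

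Essentially all of the work is bookkeeping with the orthogonal projection $\mathcal{V}\mathcal{V}^T$ and with functions of $\Sigma$; the one place where care is genuinely needed is the factorisation formula, where $C_L$ and $C_R$ are not transposes of each other, so one cannot invoke symmetry or positive-definiteness of $C_RA^TAC_L$ and must instead route both the invertibility of $I_p + 2t\,C_RA^TAC_L$ and the uniqueness argument through the spectral identification with $C_0A^TA$. I would also want to pin down cleanly that this Riccati ODE has a unique solution on all of $[0,\infty)$, so that the ``same ODE, same initial data'' argument is legitimate — this is the step where I expect to lean on Theorem~\ref{thm:general_dynamics_thm}.
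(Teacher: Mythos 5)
Your argument is correct and takes essentially the same route as the paper: the first two formulas are obtained from Theorem \ref{thm:dynamics_EKI} (via the deviations $\mathcal{E}_t$ and the identity $C_0A^{T}\mathcal{U}\Sigma^{-\frac{1}{2}}=\tfrac{1}{\sqrt{J}}\mathcal{E}_0\mathcal{V}$, which in fact comes directly from the SVD in Assumption \ref{eq:A3} rather than from Lemma \ref{lemma:EV_UV}), and the factorised formula is exactly Theorem \ref{thm:general_dynamics_thm} applied to $\tfrac{d}{dt}C_t=-2C_tA^{T}AC_t$, whose verification and invertibility argument you essentially reproduce. Your remark that the identity in the third display should be $I_p$ rather than $I_n$ is also correct and consistent with the statement of Theorem \ref{thm:general_dynamics_thm}.
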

Finally, we summarise the results concerning ensemble collapse.

\begin{corollary}[Ensemble Collapse in State Space]\label{ensemble_collapse}
Under Assumptions {\rm \ref{eq:A1}}--{\rm \ref{eq:A3}} and with $C_{0} = C_{L}C_{R}$ being any desired decomposition with $C_{L} \in \mathbb{R}^{n \times p}$, $C_{R} \in \mathbb{R}^{p \times n}$ for some $p \in \mathbb{N}^{+}$, we have
\begin{align*}
A\mathcal{E}_{t} &=  \mathcal{U}(I_{r} + 2 \Sigma t)^{-\frac{1}{2}}\mathcal{U}^{T}A\mathcal{E}_{0}\\
&= \sqrt{J}\mathcal{U} \Sigma^{\frac{1}{2}}(I_{r} + 2 \Sigma t)^{-\frac{1}{2}} \mathcal{V}^{T},
\end{align*}
and thus
\begin{equation*}
\lim_{t\to \infty} A\mathcal{E}_{t} = 0.
\end{equation*}
Similarly,
\begin{align*}
AC_{t} &= \mathcal{U}(I_{r} + 2 \Sigma t)^{-1}\mathcal{U}^{T}AC_{0}\\
&= \frac{1}{\sqrt{J}}\mathcal{U}\Sigma^{\frac{1}{2}}(I_{r} + 2 \Sigma t)^{-1}\mathcal{V}^{T}\mathcal{E}_{0}^{T},
\end{align*}
and thus
\begin{equation*}
\lim_{t\to \infty} AC_{t} = 0.
\end{equation*}
\end{corollary}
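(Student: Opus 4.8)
The plan is to obtain both displayed identities directly from the two preceding corollaries — the one giving $\mathcal{E}_t$ and the one giving $C_t$ — simply by left-multiplying their expressions by $A$ and then collapsing the result using Assumption \ref{eq:A3}. Concretely, I would work from the forms $\mathcal{E}_{t} = \mathcal{E}_{0} + (AC_{0})^{T}\mathcal{U}\Sigma^{-1}((I_{r} + 2 \Sigma t)^{-1/2} - I_{r})\mathcal{U}^{T}A\mathcal{E}_{0}$ and $C_{t} = C_{0} + C_{0}A^{T}\mathcal{U}\Sigma^{-1}((I_{r} + 2 \Sigma t)^{-1} - I_{r})\mathcal{U}^{T}AC_{0}$, and exploit the eigen-decomposition $AC_0A^T = \mathcal{U}\Sigma\mathcal{U}^T$, the compact SVD $A\mathcal{E}_0 = \sqrt J\,\mathcal{U}\Sigma^{1/2}\mathcal{V}^T$, and the orthonormality $\mathcal{U}^T\mathcal{U} = I_r$, $\mathcal{V}^T\mathcal{V} = I_r$.

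For $A\mathcal{E}_t$, the first step is to note $A(AC_0)^T\mathcal{U}\Sigma^{-1} = AC_0A^T\mathcal{U}\Sigma^{-1} = \mathcal{U}\Sigma\mathcal{U}^T\mathcal{U}\Sigma^{-1} = \mathcal{U}$, so that $A\mathcal{E}_t = (I_m - \mathcal{U}\mathcal{U}^T + \mathcal{U}(I_r + 2\Sigma t)^{-1/2}\mathcal{U}^T)A\mathcal{E}_0$. Next I would use that $A\mathcal{E}_0$ has its columns in the column space of $\mathcal{U}$ — visible from the SVD — so $\mathcal{U}\mathcal{U}^T A\mathcal{E}_0 = A\mathcal{E}_0$, whence the $I_m$ and $-\mathcal{U}\mathcal{U}^T$ terms cancel, leaving the first claimed identity $A\mathcal{E}_t = \mathcal{U}(I_r + 2\Sigma t)^{-1/2}\mathcal{U}^T A\mathcal{E}_0$. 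Substituting $\mathcal{U}^T A\mathcal{E}_0 = \sqrt J\,\Sigma^{1/2}\mathcal{V}^T$ and commuting $\Sigma^{1/2}$ past $(I_r + 2\Sigma t)^{-1/2}$ (legitimate since both are functions of the same matrix $\Sigma$) then gives the second identity $\sqrt J\,\mathcal{U}\Sigma^{1/2}(I_r + 2\Sigma t)^{-1/2}\mathcal{V}^T$. The case of $AC_t$ is handled by exactly the same two moves: $A C_0 A^T\mathcal{U}\Sigma^{-1} = \mathcal{U}$, then $\mathcal{U}\mathcal{U}^T AC_0 = AC_0$ (using $AC_0 = \tfrac{1}{\sqrt J}\mathcal{U}\Sigma^{1/2}\mathcal{V}^T\mathcal{E}_0^T$), giving $AC_t = \mathcal{U}(I_r + 2\Sigma t)^{-1}\mathcal{U}^T AC_0$, after which $\mathcal{U}^T AC_0 = \tfrac{1}{\sqrt J}\Sigma^{1/2}\mathcal{V}^T\mathcal{E}_0^T$ plus the same commutation yields $\tfrac{1}{\sqrt J}\mathcal{U}\Sigma^{1/2}(I_r + 2\Sigma t)^{-1}\mathcal{V}^T\mathcal{E}_0^T$.

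Finally, for the two limits I would observe that $\Sigma \succ 0$ (Assumption \ref{eq:A3}), so each eigenvalue of $(I_r + 2\Sigma t)^{-1/2}$ and of $(I_r + 2\Sigma t)^{-1}$ tends to $0$ as $t \to \infty$; hence those diagonal factors converge to the zero matrix in any norm and, since the remaining factors $\mathcal{U}$, $\Sigma^{1/2}$, $\mathcal{V}^T$, $\mathcal{E}_0^T$ are fixed, we conclude $A\mathcal{E}_t \to 0$ and $AC_t \to 0$. I do not anticipate a genuine obstacle: the statement is a purely algebraic rearrangement of results already established, and the only points needing care are the range identities $\mathcal{U}\mathcal{U}^T A\mathcal{E}_0 = A\mathcal{E}_0$ and $\mathcal{U}\mathcal{U}^T AC_0 = AC_0$ together with tracking which factors commute because they are functions of $\Sigma$. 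An alternative, equally short route starts instead from the factored forms $\mathcal{E}_t = \mathcal{E}_0(I_J - \mathcal{V}\mathcal{V}^T + \mathcal{V}(I_r + 2\Sigma t)^{-1/2}\mathcal{V}^T)$ and the analogous expression for $C_t$, multiplying by $A$ on the left and using $A\mathcal{E}_0\mathcal{V} = \sqrt J\,\mathcal{U}\Sigma^{1/2}$.
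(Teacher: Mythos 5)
Your proposal is correct and follows essentially the route the paper intends: the corollary is a direct consequence of the preceding expressions for $\mathcal{E}_t$ and $C_t$ (themselves corollaries of Theorem \ref{thm:dynamics_EKI}), obtained by left-multiplying by $A$, using $AC_0A^T\mathcal{U}\Sigma^{-1}=\mathcal{U}$ and the range identities $\mathcal{U}\mathcal{U}^T A\mathcal{E}_0 = A\mathcal{E}_0$, $\mathcal{U}\mathcal{U}^T AC_0 = AC_0$, and then letting the diagonal factors $(I_r+2\Sigma t)^{-1/2}$, $(I_r+2\Sigma t)^{-1}$ vanish as $t\to\infty$ since $\Sigma\succ 0$. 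Your identification of the only delicate points (the range identities and the commutation of the diagonal factors) matches what the argument actually requires.
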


\subsection{Optimal Initial Emsemble}\label{subsec:adaptiveEKI}

The subspace property (Lemma \ref{lem:invariant_subspace}) of EKI ensures that the particles remain within the span of the initial ensemble throughout the inversion process. This property can be used to regularise the inverse problem, especially when the problem is ill-posed and cannot feasibly be inverted across the entire space. The initial ensemble is therefore a critical design parameter that should be carefully chosen to ensure accurate and computationally feasible results. Typically, prior knowledge is used to inform the choice of initial ensemble. 

We illustrate this concept using the linear Gaussian case, where the forward operator is linear, and both the prior and noise are Gaussian. The optimisation problem of finding the MAP estimate - i.e. choosing u to maximise the posterior probability - is equivalent to the following minimisation problem:

\begin{equation}\label{eq:tikhonov objective}
\min_{u\in \mathbb{R}^n} \frac{1}{2}|y - A(u)|^{2} + \frac{1}{2}|u|_{R}^{2} = \min_{u\in \mathbb{R}^n} \frac{1}{2}\left|\begin{pmatrix} Au \\ R^{-\frac{1}{2}}u\end{pmatrix} - \begin{pmatrix}y \\0 \end{pmatrix}\right|^{2}.
\end{equation}

Typically, the dominant eigenvectors of a given prior are used to construct the smaller parameter subspace to which inversion process is confined. Specifically, one selects the first J terms in the Karhunen-Lo\`{e}ve expansion of the Gaussian prior. If the covariance matrix $R$ of the prior has a diagonal decomposition $R := \mathcal{V} \Lambda \mathcal{V}^{T}$, where $\Lambda := \text{diag}(\{\lambda_{(i)}\}_{i}) \in \mathbb{R}^{n \times n}$ contains the eigenvalues of $R$, and $\mathcal{V} := [v_{(1)}, \ldots, v_{(n)}] \in \mathbb{R}^{n \times n}$ consists of the corresponding eigenvectors, then the truncated Karhunen-Lo\`{e}ve expansion is given by

\begin{equation*}
\sum_{i \in \mathcal{J}} \xi_{i}\sqrt{\lambda_{(i)}}v_{(i)} \sim \mathcal{N}(0, R),
\end{equation*}

where $\xi_{i} \sim \mathcal{N}(0, 1)$ i.i.d. and $\mathcal{J} \subseteq \{1, \ldots, n\}$ indexes the J largest eigenvalues of $R$. The initial ensemble is then taken to be

\begin{equation*}
U_{0} = [u^{(1)}(0), \ldots, u^{(J)}(0)] := [v_{(\mathcal{J}_{1})}, \ldots, v_{(\mathcal{J}_{J})}] \text{diag}(\{\sqrt{\lambda_{(i)}}\}_{i \in \mathcal{J}}) =:\mathcal{V}_{\mathcal{J}}\Lambda_{\mathcal{J}}^{\frac{1}{2}}.
\end{equation*}

In this approach, one then seeks to minimise the objective function in Equation \eqref{eq:tikhonov objective} by running EKI with forward operator $\begin{pmatrix} (Au)^{T} & (R^{-\frac{1}{2}}u)^{T}\end{pmatrix}^{T}$, data $\begin{pmatrix} y^{T} & 0^{T}\end{pmatrix}^{T}$ and initial ensemble $U_{0} = \mathcal{V}_{\mathcal{J}}\Lambda_{\mathcal{J}}^{\frac{1}{2}}$.

We seek to generalise this approach by choosing among any desired set of J eigenvectors of $R$ and by allowing the initial ensemble to consist of any desired linearly independent linear combination of these eigenvectors. Specifically, we define
\begin{equation*}
U_{\mathcal{J}, B}(0) = [u_{\mathcal{J}, B}^{(1)}(0), \ldots, u_{\mathcal{J}, B}^{(J)}(0)] := [v_{\mathcal{J}_{1}}, \ldots, v_{\mathcal{J}_{J}}]B =: \mathcal{V}_{\mathcal{J}}B,
\end{equation*}

Where $\mathcal{J} \subseteq \{1, \ldots, n\}$ is a subset of size J and $B \in \R^{J \times J}$ is an invertible matrix. Our objective is then to select $\mathcal{J}$ and B to optimise the long-term performance of EKI.

We know from Lemma \ref{ensemble_collapse} that $\forall i: Au_{\mathcal{J}, B}^{(i)}(t)$ converge to the same value. Our goal, therefore, is to choose $\mathcal{J} \subseteq \{1, \ldots, n\}$ and B to minimise

\begin{equation*}
\lim_{t \rightarrow \infty} \frac12|A\bar{u}_{\mathcal{J}, B}(t) - y|^{2} + \frac12|\bar{u}_{\mathcal{J}, B}(t)|_{R}^{2}.
\end{equation*}

In fact, to develop an EKI algorithm with adaptive resampling (see Section \ref{sec:Nonlinear Forward Operator and Resampling}), we later need to choose $\mathcal{J} \subseteq \{1, \ldots, n\}$ and B to minimise the more general objective function

\begin{equation*}\label{eq:general objective}
\lim_{t \rightarrow \infty} \frac12|A\bar{u}_{\mathcal{J}, B}(t) - y|^{2} + \frac12|\bar{u}_{\mathcal{J}, B}(t) - \mu|_{R}^{2}.
\end{equation*}

For any desired $\mu \in \R^{n}$. To this end we have the following theorem:

\begin{theorem}\label{thm:main thm}
Suppose that Assumptions  {\rm \ref{eq:A1}}--{\rm \ref{eq:A3}} hold. Further, let  $R \in \mathbb{R}^{n \times n}$ be symmetric and positive definite with an eigen-decomposition $R = \mathcal{V} \Lambda \mathcal{V}^{T} \in \mathbb{R}^{n \times n}$, where $\Lambda := \mbox{diag}(\{\lambda_{(i)}\}_{i}) \in \mathbb{R}^{n \times n}$ and $\mathcal{V} := [v_{1}, \ldots , v_{n}] \in \mathbb{R}^{n \times n}$. We denote by $\mathcal{J} \subseteq \{1, \ldots , n \}$ an index set of size J and $\mathcal{J}^{c} := \{1, \ldots , n \} \backslash \mathcal{J}$, and set $\mathcal{V}_{\mathcal{J}} := [v_{\mathcal{J}_{1}}, \ldots, v_{\mathcal{J}_{J}}] \in \mathbb{R}^{n \times J}$ and $\Lambda_{\mathcal{J}} := \mbox{diag}(\{ \lambda_{\mathcal{J}_{1}}, \ldots, \lambda_{\mathcal{J}_{J}}\})$. 

For an invertible matrix $B \in \mathbb{R}^{J \times J}$, we denote by $\{u^{(i)}_{\mathcal{J}, B}(t)\}_{i=1}^{J} \in \mathbb{R}^{n}$ the particles moving under deterministic EKI with the linear forward operator $\begin{pmatrix} A^{T} & R^{-\frac{1}{2}} \end{pmatrix}^{T} \in \mathbb{R}^{(m + n) \times n}$ and data $\begin{pmatrix} y^{T} & (R^{-\frac{1}{2}}\mu)^{T} \end{pmatrix}^{T} \in \mathbb{R}^{m + n}$ and with starting position $[u^{(1)}_{\mathcal{J}, B}(0),\ldots, u^{(J)}_{\mathcal{J}, B}(0)] := \mathcal{V}_{\mathcal{J}}B \in \R^{n \times J}$ , i.e. following the dynamics given by ODEs \eqref{eq:IVPEKI}. 

Let $1_{J} \in \mathbb{R}^{J}$ be a vector of ones, $A_{\mathcal{J}} := A\mathcal{V}_{\mathcal{J}} \in \mathbb{R}^{m \times J}$, $Q_{\mathcal{J}} := (I_{m} + A_{\mathcal{J}}\Lambda_{\mathcal{J}}A_{\mathcal{J}}^{T})^{-1} \in \R^{m \times m}$, $M_{\mathcal{J}} := (\Lambda_{\mathcal{J}}^{-1} + A_{\mathcal{J}}^{T}A_{\mathcal{J}} )^{-1} \in \R^{J \times J}$, $R_{\mathcal{J}}^{\dagger} := \mathcal{V}_{\mathcal{J}}\Lambda_{\mathcal{J}}^{-1}\mathcal{V}_{\mathcal{J}}$, $R_{\mathcal{J}^{c}}^{\dagger} := \mathcal{V}_{\mathcal{J}^{c}}\Lambda_{\mathcal{J}^{c}}^{-1}\mathcal{V}_{\mathcal{J}^{c}} \in \R^{m \times m}$ and $z_{\mathcal{J}} := y -  A\mathcal{V}_{\mathcal{J}}\mathcal{V}_{\mathcal{J}}^{T}\mu \in \R^{m}$.

Then $u^{(i)}_{\mathcal{J}, B}(t)$ converge for $i = 1, \ldots, J$ and 
\begin{align*}
\lim_{t \rightarrow \infty} &|A\bar{u}_{\mathcal{J}, B}(t) - y|^{2} + |\bar{u}_{\mathcal{J}, B}(t) - \mu|_{R}^{2} \\
&\quad = z_{\mathcal{J}}^{T}Q_{\mathcal{J}}z_{\mathcal{J}} + \frac{(1 - 1_{J}^{T}B^{-1}(\mathcal{V}_{\mathcal{J}}^{T}\mu + \Lambda_{\mathcal{J}}A_{\mathcal{J}}^{T}Q_{\mathcal{J}}z_{\mathcal{J}}))^{2}}{ 1_{J}^{T}B^{-1}\Lambda_{\mathcal{J}}B^{-T}1_{J} - (A_{\mathcal{J}}\Lambda_{\mathcal{J}}B^{-T}1_{J})^{T}Q_{\mathcal{J}}A_{\mathcal{J}}\Lambda_{\mathcal{J}}B^{-T}1_{J}} + \mu^{T}R^{\dagger}_{\mathcal{J}^{c}}\mu\\
&\quad= z_{\mathcal{J}}^{T}z_{\mathcal{J}} - z_{\mathcal{J}}^{T}A_{\mathcal{J}}M_{\mathcal{J}}A_{\mathcal{J}}^{T}z_{\mathcal{J}} + \frac{(1 - 1_{J}^{T}B^{-1}M_{\mathcal{J}}\mathcal{V}_{\mathcal{J}}^{T}(A^{T}y + R_{\mathcal{J}}^{\dagger}\mu))^{2}}{1_{J}^{T}B^{-1}M_{\mathcal{J}}B^{-T}1_{J}} + \mu^{T}R^{\dagger}_{\mathcal{J}^{c}}\mu 
.
\end{align*}
\end{theorem}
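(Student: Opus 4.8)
The plan is to reduce the limiting objective to a finite-dimensional quantity by combining the state-space collapse result (Corollary~\ref{ensemble_collapse}) with the explicit trajectory formula (Theorem~\ref{thm:dynamics_EKI}), and then to optimise the ``free'' degrees of freedom that remain. First I would apply Theorem~\ref{thm:main thm}'s hypotheses to the stacked forward operator $\widetilde A = \begin{pmatrix} A^{T} & R^{-\frac12}\end{pmatrix}^{T}$ and stacked data $\widetilde y = \begin{pmatrix} y^{T} & (R^{-\frac12}\mu)^{T}\end{pmatrix}^{T}$, so that the quantity to be evaluated is exactly $\lim_{t\to\infty}|\widetilde A\bar u_{\mathcal J,B}(t) - \widetilde y|^{2}$. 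Since the initial ensemble lies in $\spann\{v_{\mathcal J_1},\dots,v_{\mathcal J_J}\} = \mathrm{range}(\mathcal V_{\mathcal J})$, the subspace property (Lemma~\ref{lem:invariant_subspace}) confines every $\bar u_{\mathcal J,B}(t)$ to the affine space $\bar u_{\mathcal J,B}(0) + \mathrm{range}(\mathcal E_0)$; writing $\bar u_{\mathcal J,B}(t) = \mathcal V_{\mathcal J}w(t)$ for a reduced coordinate $w(t)\in\R^{J}$ turns everything into a computation with the $J\times J$ reduced Gram matrices. The key structural input is that, by Corollary~\ref{ensemble_collapse}, $\widetilde A\mathcal E_t\to 0$, so in the limit all particles share the same image $\widetilde A\bar u_\infty$; moreover Corollary~\ref{terminal_residual} tells us the limiting residual is the projection of the initial residual onto $\ker(\widetilde A C_0\widetilde A^{T})$, which pins down the component of $\bar u_\infty$ in the ``observed'' directions.

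The next step is to split the objective into the part determined by the dynamics and the part that is genuinely free. The collapse result fixes $\widetilde A\bar u_\infty$ up to the kernel of $\widetilde A C_0 \widetilde A^T$, but $\bar u_\infty$ itself is not fully determined: there is a one-dimensional ambiguity coming from the fact that the mean moves along $\spann\{\mathcal E_0 1_J\}$-type directions while the \emph{individual} particles' limits differ. Concretely, I would parametrise $\bar u_\infty = \bar u_{\mathcal J,B}(0) + \mathcal E_0 c$ for the appropriate $c\in\R^{J}$ and use Theorem~\ref{thm:dynamics_EKI} (taking $t\to\infty$, using $(I_r + 2\Sigma t)^{-1/2}\to 0$) to read off $c$ in terms of $\mathcal V, \Sigma, \mathcal U$ and the initial residual; then average over $i$. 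This produces $\bar u_\infty$ as an explicit affine function of the columns of $B$ through $\bar u_{\mathcal J,B}(0) = \mathcal V_{\mathcal J} B \tfrac{1}{J}1_J$ and $\mathcal E_0 = \mathcal V_{\mathcal J}(B - B\tfrac1J 1_J1_J^T)$. Substituting into $|\widetilde A\bar u_\infty - \widetilde y|^2$ and simplifying via the Woodbury identity — which is exactly what converts $(I_m + A_{\mathcal J}\Lambda_{\mathcal J}A_{\mathcal J}^T)^{-1} = Q_{\mathcal J}$ into the equivalent $M_{\mathcal J} = (\Lambda_{\mathcal J}^{-1} + A_{\mathcal J}^T A_{\mathcal J})^{-1}$ form, and relates the two displayed expressions in the theorem — should yield the stated closed form. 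The decomposition of the answer into the three summands is natural: $z_{\mathcal J}^T Q_{\mathcal J} z_{\mathcal J}$ is the irreducible data-misfit within the chosen subspace, the middle fraction captures the penalty incurred by the particular linear combination $B$ (it is a Rayleigh-type quotient and vanishes precisely when $B$ is chosen optimally), and $\mu^T R^\dagger_{\mathcal J^c}\mu$ is the unavoidable prior cost of the components of $\mu$ that $\mathcal V_{\mathcal J}$ cannot represent.

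The main obstacle I anticipate is bookkeeping the interaction between the mean dynamics and the individual-particle collapse: the subtlety is that $\lim_t\widetilde A\mathcal E_t = 0$ does \emph{not} by itself determine $\lim_t \bar u_{\mathcal J,B}(t)$, because the reduced system is rank-deficient (the rank $r$ of $\widetilde A C_0 \widetilde A^T$ can be less than $J$, and even when $r = J$ the direction $1_J$ interacts nontrivially). One must carefully track which linear functionals of $\bar u_\infty$ are fixed by Corollary~\ref{terminal_residual} and which remain as the explicit $B$-dependent fraction; getting the $1_J^T B^{-1}$ factors and the sign/normalisation in the numerator $1 - 1_J^T B^{-1}(\cdots)$ right is where errors are most likely. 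A clean way to organise this is to first prove the formula in the special case $B = I_J$ (standard Karhunen--Loève-type initial ensemble), where $\bar u_{\mathcal J,I}(0) = \tfrac1J\mathcal V_{\mathcal J}1_J$ and the algebra is lighter, verify it reduces to the known MAP-within-subspace residual, and then handle general invertible $B$ by the substitution $\mathcal V_{\mathcal J}\mapsto \mathcal V_{\mathcal J}B$, $\Lambda_{\mathcal J}\mapsto B^{-1}\Lambda_{\mathcal J}B^{-T}$ at the level of the reduced covariance. The remaining work — expanding $|\widetilde A\bar u_\infty - \widetilde y|^2 = |A\bar u_\infty - y|^2 + |\bar u_\infty - \mu|_R^2$ using $\widetilde A$'s block structure, inserting the Woodbury identities, and collecting terms — is routine linear algebra that I would relegate to an appendix.
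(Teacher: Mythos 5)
Your skeleton matches the paper's proof in its essentials: confine the particles to $\spann(\mathcal{V}_{\mathcal{J}})$ via Lemma \ref{lem:invariant_subspace}, use ensemble collapse and Corollary \ref{terminal_residual} to identify the limiting residual with the projection of the initial residual onto $\mbox{ker}(\tilde{A}C_{0}\tilde{A}^{T})$, and convert between the $Q_{\mathcal{J}}$ and $M_{\mathcal{J}}$ forms with the Woodbury identity. Two points of execution differ and deserve attention. First, the paper does not carry the full block $R^{-\frac{1}{2}}$ through the limit computation: since every particle stays in $\spann(\mathcal{V}_{\mathcal{J}})$, it splits $\norma{u-\mu}_{R}^{2}$ into its $\mathcal{J}$ and $\mathcal{J}^{c}$ parts, peels off the constant $\mu^{T}R^{\dagger}_{\mathcal{J}^{c}}\mu$ at the outset, and runs the projection argument with the restricted block $(R_{\mathcal{J}}^{\dagger})^{\frac{1}{2}}$, which is what makes the third summand appear cleanly. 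Second, the step you relegate to ``routine linear algebra'' is where the real work sits: the paper never computes $\bar{u}_{\infty}$; instead Lemma \ref{Im(AE)_perp} exhibits an explicit, linearly independent spanning matrix $P$ of $\mbox{ker}(\tilde{A}C_{0}\tilde{A}^{T})$ whose block structure makes $P^{T}(\tilde{A}\bar{u}_{\mathcal{J},B}(0)-\tilde{y})$ sparse and $(P^{T}P)^{-1}$ computable by block inversion, and this is precisely where the $1_{J}^{T}B^{-1}$ factors and the numerator $1-1_{J}^{T}B^{-1}(\cdots)$ come from. Your alternative of reading $\bar{u}_{\infty}$ off Theorem \ref{thm:dynamics_EKI} and substituting is viable, but it still requires an explicit handle on the range of $\tilde{A}C_{0}\tilde{A}^{T}$ with the rank-deficient centred ensemble covariance, i.e.\ the same work in different clothing, which your plan does not yet supply. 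Finally, the suggested shortcut of proving the case $B=I_{J}$ and then substituting $\mathcal{V}_{\mathcal{J}}\mapsto\mathcal{V}_{\mathcal{J}}B$, $\Lambda_{\mathcal{J}}\mapsto B^{-1}\Lambda_{\mathcal{J}}B^{-T}$ is not sound: $R$ enters through the regularisation block and the weighted norm and does not transform with the ensemble, and $\mathcal{V}_{\mathcal{J}}B$ is no longer orthonormal, so the identities available for $B=I_{J}$ are lost; handle general $B$ directly, as in your main parametrisation.
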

The proof of the theorem can be found in Appendix \ref{proof:main thm}.

\begin{corollary}\label{cor:optimality_condition}
Under the assumptions of Theorem \ref{thm:main thm}, the following inequality holds:
\begin{equation*}
\lim_{t \rightarrow \infty}\norma{A\bar{u}_{\mathcal{J}, B}(t) - y}^{2} + \norma{\bar{u}_{\mathcal{J}, B}(t) - \mu}_{R}^{2} \ge z_{\mathcal{J}}^{T}z_{\mathcal{J}} - z_{\mathcal{J}}^{T}A_{\mathcal{J}}M_{\mathcal{J}}A_{\mathcal{J}}^{T}z_{\mathcal{J}} + \mu^{T}R^{\dagger}_{\mathcal{J}^{c}}\mu,
\end{equation*}
where the lower bound is attained if and only if 
\begin{equation}\label{eq:optimal B condition}
1_{J}^{T}B^{-1}M_{\mathcal{J}}\mathcal{V}_{\mathcal{J}}^{T}(A^{T}y + R_{\mathcal{J}}^{\dagger}\mu) = 1.
\end{equation}
\end{corollary}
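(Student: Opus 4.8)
The plan is to read off the corollary directly from the second expression for the limit given in Theorem~\ref{thm:main thm}. That expression has the form $z_{\mathcal{J}}^{T}z_{\mathcal{J}} - z_{\mathcal{J}}^{T}A_{\mathcal{J}}M_{\mathcal{J}}A_{\mathcal{J}}^{T}z_{\mathcal{J}} + \mu^{T}R^{\dagger}_{\mathcal{J}^{c}}\mu + \frac{(1 - c)^{2}}{d}$, where $c := 1_{J}^{T}B^{-1}M_{\mathcal{J}}\mathcal{V}_{\mathcal{J}}^{T}(A^{T}y + R_{\mathcal{J}}^{\dagger}\mu)$ is a scalar depending on $B$ and $d := 1_{J}^{T}B^{-1}M_{\mathcal{J}}B^{-T}1_{J}$ is a scalar in the denominator. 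The first three terms are independent of $B$, so the corollary reduces to the claim that $\frac{(1-c)^{2}}{d} \ge 0$, with equality precisely when $c = 1$, i.e.\ exactly condition \eqref{eq:optimal B condition}.

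First I would establish that the denominator $d = 1_{J}^{T}B^{-1}M_{\mathcal{J}}B^{-T}1_{J}$ is strictly positive. Since $M_{\mathcal{J}} = (\Lambda_{\mathcal{J}}^{-1} + A_{\mathcal{J}}^{T}A_{\mathcal{J}})^{-1}$ with $\Lambda_{\mathcal{J}} \succ 0$ (it is the restriction of a positive-definite $R$ to a coordinate subspace), $M_{\mathcal{J}}$ is symmetric positive definite; since $B$ is invertible, $B^{-T}1_{J} \neq 0$, and hence $d = (B^{-T}1_{J})^{T} M_{\mathcal{J}} (B^{-T}1_{J}) > 0$. With $d > 0$ and $(1-c)^{2} \ge 0$ trivially, the quotient is nonnegative, giving the stated inequality, and it vanishes if and only if $1 - c = 0$.

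It then remains only to argue that this minimum value is actually attainable by some admissible $B$, so that the "if and only if" is not vacuous: one exhibits an invertible $B$ with $1_{J}^{T}B^{-1}M_{\mathcal{J}}\mathcal{V}_{\mathcal{J}}^{T}(A^{T}y + R_{\mathcal{J}}^{\dagger}\mu) = 1$. For instance, if the vector $w := M_{\mathcal{J}}\mathcal{V}_{\mathcal{J}}^{T}(A^{T}y + R_{\mathcal{J}}^{\dagger}\mu) \in \mathbb{R}^{J}$ is nonzero, pick any invertible $B$ whose first row of $B^{-1}$ is $w^{T}/|w|^{2}$ and remaining rows complete it to an invertible matrix; then $1_{J}^{T}B^{-1}w$ can be made equal to $1$ after a trivial rescaling of that row. (If $w = 0$ the bound is trivially attained for no $B$ and the condition \eqref{eq:optimal B condition} is unsatisfiable, so one should note the implicit nondegeneracy; in the generic case $w \neq 0$.) This attainability remark is really the only point requiring a moment's thought; the inequality itself is immediate from the structure of the formula in Theorem~\ref{thm:main thm}.

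The main obstacle, such as it is, is purely bookkeeping: one must be careful that the second displayed formula in Theorem~\ref{thm:main thm} is the one to use (the first formula, in terms of $Q_{\mathcal{J}}$, has the $B$-dependence split across both numerator and denominator in a less transparent way), and one must confirm that $M_{\mathcal{J}} \succ 0$ rather than merely $\succeq 0$ so that $d > 0$ strictly — otherwise division by $d$ is not justified. Both are straightforward given Assumptions \ref{eq:A1}--\ref{eq:A3} and the positive-definiteness of $R$.
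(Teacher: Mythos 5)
Your proposal is correct and matches the paper's (implicit) argument: the corollary is read off directly from the second formula in Theorem \ref{thm:main thm}, where the only $B$-dependent term is the quotient $(1-c)^{2}/d$ with $d = 1_{J}^{T}B^{-1}M_{\mathcal{J}}B^{-T}1_{J} > 0$ because $M_{\mathcal{J}} = (\Lambda_{\mathcal{J}}^{-1}+A_{\mathcal{J}}^{T}A_{\mathcal{J}})^{-1} \succ 0$ and $B^{-T}1_{J} \neq 0$, so the term is nonnegative and vanishes exactly when condition \eqref{eq:optimal B condition} holds. Your additional attainability remark is not needed for the corollary itself (the equivalence holds as stated even in the degenerate case), and the paper handles the construction of admissible $B$ separately in the discussion preceding Corollary \ref{cor:double optimal B condition}.
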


This optimality condition for the matrix B allows us to characterise the EKI estimate as the best approximation within the given subspace.

\begin{corollary}\label{limit_point_optimality}
Under the assumptions of Theorem \ref{thm:main thm}, and assuming that $B \in \mathbb{R}^{J \times J}$ satisfies the optimality condition given in Corollary \ref{cor:optimality_condition}, the following holds:
\begin{equation*}
\lim_{t \rightarrow \infty} u^{(i)}(t) = \argmin_{u \in \text{Im}(V_{\mathcal{J}})}  \frac{1}{2}\norma{Au - y}^{2} + \frac{1}{2}\norma{u - \mu}_{R}^{2} .
\end{equation*}
\end{corollary}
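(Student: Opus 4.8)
The plan is to combine Theorem \ref{thm:main thm} with the explicit characterisation of the limit points already available from Corollary \ref{terminal_residual} and the ensemble-collapse corollaries. First I would recall that, by Corollary \ref{ensemble_collapse} applied to the augmented forward operator $\begin{pmatrix} A^{T} & R^{-\frac{1}{2}} \end{pmatrix}^{T}$, the particles collapse in image space, so all $u^{(i)}_{\mathcal{J}, B}(t)$ converge to a common limit $u^{\infty} := \lim_{t\to\infty}\bar u_{\mathcal J, B}(t)$; this limit lies in $\mathrm{Im}(\mathcal V_{\mathcal J})$ by the subspace property (Lemma \ref{lem:invariant_subspace}), since the initial ensemble spans exactly $\mathrm{Im}(\mathcal V_{\mathcal J})$ (here one uses that the affine hull of the columns of $\mathcal V_{\mathcal J}B$, together with $0 = \mu$ after the WLOG reduction, is the linear span $\mathrm{Im}(\mathcal V_{\mathcal J})$ — one should check the mean-shift bookkeeping carefully here).

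Next I would observe that the minimisation problem $\min_{u \in \mathrm{Im}(\mathcal V_{\mathcal J})} \frac12|Au-y|^2 + \frac12|u-\mu|_R^2$ is a strictly convex quadratic on the finite-dimensional subspace $\mathrm{Im}(\mathcal V_{\mathcal J})$, hence has a unique minimiser $u^{\star}$, and its optimal value is exactly the scalar $\tfrac12\big(z_{\mathcal J}^{T}z_{\mathcal J} - z_{\mathcal J}^{T}A_{\mathcal J}M_{\mathcal J}A_{\mathcal J}^{T}z_{\mathcal J}\big) + \tfrac12\,\mu^{T}R^{\dagger}_{\mathcal J^{c}}\mu$. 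This can be verified by writing $u = \mathcal V_{\mathcal J}w$ for $w \in \R^{J}$, using $\mathcal V_{\mathcal J}^{T}\mathcal V_{\mathcal J} = I_{J}$ and $\mathcal V_{\mathcal J}^{T}\mathcal V_{\mathcal J^{c}} = 0$ to split $|u-\mu|_R^2 = |w - \mathcal V_{\mathcal J}^{T}\mu|_{\Lambda_{\mathcal J}}^2 + \mu^{T}R^{\dagger}_{\mathcal J^{c}}\mu$, and solving the resulting normal equations; the minimiser is $w^{\star} = M_{\mathcal J}\mathcal V_{\mathcal J}^{T}(A^{T}y + R_{\mathcal J}^{\dagger}\mu)$, which gives $u^{\star} = \mathcal V_{\mathcal J}M_{\mathcal J}\mathcal V_{\mathcal J}^{T}(A^{T}y + R_{\mathcal J}^{\dagger}\mu)$ and the stated optimal value.

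Then I would invoke Corollary \ref{cor:optimality_condition}: when $B$ satisfies \eqref{eq:optimal B condition}, the limiting objective value $\lim_{t\to\infty}|A\bar u_{\mathcal J,B}(t)-y|^2 + |\bar u_{\mathcal J,B}(t)-\mu|_R^2$ equals precisely twice the optimal value $\tfrac12(z_{\mathcal J}^{T}z_{\mathcal J} - z_{\mathcal J}^{T}A_{\mathcal J}M_{\mathcal J}A_{\mathcal J}^{T}z_{\mathcal J}) + \tfrac12\mu^{T}R^{\dagger}_{\mathcal J^{c}}\mu$ computed above (the fractional term in Theorem \ref{thm:main thm} vanishes). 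Since $u^{\infty} \in \mathrm{Im}(\mathcal V_{\mathcal J})$ attains the minimum value of a strictly convex function over that subspace, uniqueness of the minimiser forces $u^{\infty} = u^{\star}$, which is the claim. The main obstacle I anticipate is not any single hard estimate but the bookkeeping: one must be careful that the objective in Theorem \ref{thm:main thm} is stated without the factor $\tfrac12$, that the convergence of the individual particles (not just the mean) to the common limit is justified via Corollary \ref{ensemble_collapse}, and that the reduction $\mu = 0$, $\Gamma = I$ used elsewhere is either invoked consistently or the general-$\mu$ computation is carried through directly — the cleanest route is probably to do the convex-minimisation computation with general $\mu$ so it lines up term-by-term with the second displayed expression in Theorem \ref{thm:main thm}.
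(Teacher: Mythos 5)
Your proof is correct and is essentially the argument the paper intends: the corollary carries no separate proof precisely because it follows from Theorem \ref{thm:main thm} and Corollary \ref{cor:optimality_condition} in the way you describe, and the two computations you need (the orthogonal split $\norma{u-\mu}_{R}^{2}=\norma{u-\mu}_{R_{\mathcal{J}}}^{2}+\norma{\mu}_{R_{\mathcal{J}^{c}}}^{2}$ for $u\in\mathrm{Im}(\mathcal{V}_{\mathcal{J}})$ and the identity $\mathcal{V}_{\mathcal{J}}^{T}\mu+M_{\mathcal{J}}A_{\mathcal{J}}^{T}z_{\mathcal{J}}=M_{\mathcal{J}}\mathcal{V}_{\mathcal{J}}^{T}(A^{T}y+R_{\mathcal{J}}^{\dagger}\mu)$) already appear verbatim in the paper's proof of Theorem \ref{thm:main thm}. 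The bookkeeping worry you flag is harmless: you only need the inclusion of the initial ensemble's affine hull in $\mathrm{Im}(\mathcal{V}_{\mathcal{J}})$ (not equality), which is immediate since $\mathrm{Im}(\mathcal{V}_{\mathcal{J}})$ is a linear subspace containing the columns of $\mathcal{V}_{\mathcal{J}}B$, and injectivity of the augmented operator $\begin{pmatrix} A^{T} & R^{-\frac{1}{2}}\end{pmatrix}^{T}$ converts the image-space collapse of Corollary \ref{ensemble_collapse} into the state-space agreement of all particle limits, exactly as you assert.
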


\subsubsection{Optimal Linear Combination}

For any invertible $D \in \R^{J \times J}$ such that $1_{J}^{T}D^{-1}M_{\mathcal{J}}\mathcal{V}_{\mathcal{J}}^{T}(A^{T}y + R_{\mathcal{J}}^{\dagger}\mu) \ne 0$,\\ $B:=(1_{J}^{T}D^{-1}M_{\mathcal{J}}\mathcal{V}_{\mathcal{J}}^{T}(A^{T}y + R_{\mathcal{J}}^{\dagger}\mu))^{-1}D$ satisfies Equation \eqref{eq:optimal B condition}. As such, there is a lot of flexibility in constructing an optimal B.

Given this flexibility in choosing an optimal B  - i.e.  a B that  minimises the final value of the objective function in Equation \eqref{eq:general objective} - we can restrict our attention to those optimal B that further minimise the initial value of the objective function.
\begin{corollary}\label{cor:double optimal B condition}
Let
\begin{equation*}
    \mathcal{B} := \{B \in \mathbb{R}^{J \times J} : 1 = 1_{J}^{T}B^{-1}   M_{\mathcal{J}}\mathcal{V}_{\mathcal{J}}^{T}(A^{T}y + R_{\mathcal{J}}^{\dagger}\mu)\},
\end{equation*}
i.e. $\mathcal{B}$ is the set of matrices that satisfy Equation \eqref{eq:optimal B condition}. Let $B^{*} \in \mathcal{B}$ satisfy
\begin{equation}\label{eq:double optimal B condition}
    B^{*}1_{J}/\sqrt{J} = \sqrt{J} M_{\mathcal{J}}\mathcal{V}_{\mathcal{J}}^{T}(A^{T}y + R_{\mathcal{J}}^{\dagger}\mu).
\end{equation}
Then
\begin{equation*}
    \norma{A\bar{u}_{\mathcal{J}, B^{*}}(0) - y}^{2} + \norma{\bar{u}_{\mathcal{J}, B^{*}}(0) - \mu}_{R}^{2} = \min_{B \in \mathcal{B}}\norma{A\bar{u}_{\mathcal{J}, B}(0) - y}^{2} + \norma{\bar{u}_{\mathcal{J}, B}(0) - \mu}_{R}^{2}.
\end{equation*}
\end{corollary}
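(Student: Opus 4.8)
\medskip
\noindent\textbf{Proof idea.} The plan is to reduce the claim to the elementary fact that a strictly convex quadratic attains its global minimum at its unique stationary point, once one notices that the quantity to be minimised depends on $B$ only through the single vector $\frac{1}{J}B1_{J}$.

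First I would record that, since the initial ensemble is $U_{\mathcal{J},B}(0)=\mathcal{V}_{\mathcal{J}}B$, its empirical mean is $\bar{u}_{\mathcal{J},B}(0)=\frac{1}{J}\mathcal{V}_{\mathcal{J}}B1_{J}=\mathcal{V}_{\mathcal{J}}w_{B}$ with $w_{B}\defeq\frac{1}{J}B1_{J}\in\R^{J}$. Hence
\[
\norma{A\bar{u}_{\mathcal{J},B}(0)-y}^{2}+\norma{\bar{u}_{\mathcal{J},B}(0)-\mu}_{R}^{2}=g(w_{B}),\qquad g(w)\defeq\norma{A\mathcal{V}_{\mathcal{J}}w-y}^{2}+\norma{\mathcal{V}_{\mathcal{J}}w-\mu}_{R}^{2}.
\]
Because the columns of $\mathcal{V}_{\mathcal{J}}$ are $R$-eigenvectors, $\mathcal{V}_{\mathcal{J}}^{T}R^{-1}\mathcal{V}_{\mathcal{J}}=\Lambda_{\mathcal{J}}^{-1}$, so $g$ is a quadratic with Hessian $2(A_{\mathcal{J}}^{T}A_{\mathcal{J}}+\Lambda_{\mathcal{J}}^{-1})\succ 0$; it is therefore strictly convex and its unique minimiser $w^{\ast}$ solves $\nabla g(w^{\ast})=0$, i.e.\ $(A_{\mathcal{J}}^{T}A_{\mathcal{J}}+\Lambda_{\mathcal{J}}^{-1})w^{\ast}=A_{\mathcal{J}}^{T}y+\Lambda_{\mathcal{J}}^{-1}\mathcal{V}_{\mathcal{J}}^{T}\mu$. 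Using $A_{\mathcal{J}}^{T}=\mathcal{V}_{\mathcal{J}}^{T}A^{T}$, the identity $\Lambda_{\mathcal{J}}^{-1}\mathcal{V}_{\mathcal{J}}^{T}=\mathcal{V}_{\mathcal{J}}^{T}R_{\mathcal{J}}^{\dagger}$ (from $\mathcal{V}_{\mathcal{J}}^{T}\mathcal{V}_{\mathcal{J}}=I_{J}$ and the definition of $R_{\mathcal{J}}^{\dagger}$), and $M_{\mathcal{J}}=(\Lambda_{\mathcal{J}}^{-1}+A_{\mathcal{J}}^{T}A_{\mathcal{J}})^{-1}$, this rearranges to $w^{\ast}=M_{\mathcal{J}}\mathcal{V}_{\mathcal{J}}^{T}(A^{T}y+R_{\mathcal{J}}^{\dagger}\mu)$.

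Next I would observe that condition \eqref{eq:double optimal B condition} states precisely $w_{B^{\ast}}=\frac{1}{J}B^{\ast}1_{J}=M_{\mathcal{J}}\mathcal{V}_{\mathcal{J}}^{T}(A^{T}y+R_{\mathcal{J}}^{\dagger}\mu)=w^{\ast}$; in particular $B^{\ast}1_{J}=Jw^{\ast}$, hence $(B^{\ast})^{-1}w^{\ast}=\frac{1}{J}1_{J}$ and $1_{J}^{T}(B^{\ast})^{-1}w^{\ast}=1$, which is consistent with $B^{\ast}\in\mathcal{B}$ (and, incidentally, forces $w^{\ast}\neq 0$, i.e.\ $\mathcal{V}_{\mathcal{J}}^{T}(A^{T}y+R_{\mathcal{J}}^{\dagger}\mu)\neq 0$ — exactly the non-degeneracy that makes $\mathcal{B}$ nonempty and such a $B^{\ast}$ available). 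Consequently, for every $B\in\mathcal{B}$,
\[
\norma{A\bar{u}_{\mathcal{J},B}(0)-y}^{2}+\norma{\bar{u}_{\mathcal{J},B}(0)-\mu}_{R}^{2}=g(w_{B})\ \ge\ \min_{w\in\R^{J}}g(w)=g(w^{\ast})=g(w_{B^{\ast}}),
\]
with equality at $B=B^{\ast}$, which is the asserted minimality. Note that this argument never needs a description of the set $\{w_{B}:B\in\mathcal{B}\}$: it suffices that $g$ has a global minimiser $w^{\ast}$ and that $B^{\ast}$ realises $w_{B^{\ast}}=w^{\ast}$.

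The only genuinely computational step, and the one I would be most careful with, is the algebra identifying the stationary point of $g$ with $M_{\mathcal{J}}\mathcal{V}_{\mathcal{J}}^{T}(A^{T}y+R_{\mathcal{J}}^{\dagger}\mu)$ — essentially, correctly expanding the weighted norm $\norma{\mathcal{V}_{\mathcal{J}}w-\mu}_{R}^{2}$ through the eigenstructure of $R$ and matching the resulting terms with the definitions of $M_{\mathcal{J}}$ and $R_{\mathcal{J}}^{\dagger}$. I do not anticipate any real obstacle beyond this bookkeeping.
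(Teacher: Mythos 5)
Your proof is correct, but it follows a genuinely different route from the paper. The paper works with the pair $w=\frac{1}{J}B1_{J}$, $v=B^{-T}1_{J}$, treats the task as a constrained problem with the two constraints $v^{T}w=1$ and $v^{T}\tilde z=1$ (membership in $\mathcal{B}$), and grinds through a Lagrange-multiplier computation whose upshot is that both multipliers vanish and the stationary point has $w=\tilde z:=M_{\mathcal{J}}\mathcal{V}_{\mathcal{J}}^{T}(A^{T}y+R_{\mathcal{J}}^{\dagger}\mu)$. You instead observe that the objective depends on $B$ only through $w_{B}=\frac{1}{J}B1_{J}$, that $g(w)=\norma{A_{\mathcal{J}}w-y}^{2}+\norma{\mathcal{V}_{\mathcal{J}}w-\mu}_{R}^{2}$ is a strictly convex quadratic whose unique unconstrained minimiser is exactly $\tilde z$, and that \eqref{eq:double optimal B condition} says precisely $w_{B^{*}}=\tilde z$; since $B^{*}\in\mathcal{B}$ by hypothesis, the constrained minimum over $\mathcal{B}$ is attained at $B^{*}$. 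This buys several things: it explains structurally why the constraint is inactive (the vanishing multipliers $\alpha=\beta=0$ in the paper's calculation), it actually proves the stronger statement that $B^{*}$ minimises the initial objective over all invertible $B$, not only over $\mathcal{B}$, and your stationarity computation keeps the $\mu$-dependent term throughout, consistently with the statement of the corollary — whereas the paper's displayed argument drops the $-\mu$ inside the $R_{\mathcal{J}}$-norm and ends with $B1_{J}=JM_{\mathcal{J}}A_{\mathcal{J}}^{T}y$, omitting $R_{\mathcal{J}}^{\dagger}\mu$. Your side remark that \eqref{eq:double optimal B condition} automatically forces $1_{J}^{T}(B^{*})^{-1}\tilde z=1$, i.e.\ membership in $\mathcal{B}$ and $\tilde z\neq 0$, is also a nice addition not made explicit in the paper.
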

The proof of this corollary can be found in Appendix \ref{proof: double optimal B condition}.

\subsubsection{Optimal Index Set}

Having established methods for generating an optimal B given a choice of eigenvectors $\mathcal{J}$, we now seek to choose a $\mathcal{J}$  that minimises the remaining term in Theorem \ref{thm:main thm} , i.e. the lower bound in Corollary \ref{cor:optimality_condition}:
\begin{equation}\label{reduced_eqn_to_optimise}
z_{\mathcal{J}}^{T}z_{\mathcal{J}} - z_{\mathcal{J}}^{T}A_{\mathcal{J}}\mathcal{M}_{\mathcal{J}}A_{\mathcal{J}}^{T}z_{\mathcal{J}} + \mu^{T}R^{\dagger}_{\mathcal{J}^{c}}\mu .
\end{equation}

Given that there are $\binom{n}{J}$ possible choices for $\mathcal{J}$, solving the objective function \eqref{reduced_eqn_to_optimise} by brute force quickly becomes computationally intractable for large n. We instead seek to iteratively build up the index set J , i.e. for k < N : $\mathcal{J}_{k} \subseteq \{1, \ldots, n\}$ with $|\mathcal{J}_{k}| = k$ and $\mathcal{J}_{k+1} := \mathcal{J} \cup \{j^{*}\}$ for some $j^{*} \in \{1, \ldots, n\} \backslash \mathcal{J}_{k}$. Recalling the definitions of $z_{\mathcal{J}}$ and $\norma{\mu}_{\mathcal{J}^{c}}^{2}$, it is clear that these are easy to iteratively calculate  ($z_{\mathcal{J}_{k + 1}} = z_{\mathcal{J}_{k}} - Av_{j^{*}}v_{j^{*}}^{T}\mu$ and $\mu^{T}R^{\dagger}_{\mathcal{J}_{k + 1}^{c}}\mu  = \mu^{T}R^{\dagger}_{\mathcal{J}_{k}^{c}}\mu - \frac{1}{\lambda_{j^{*}}}\langle \mu,v_{j^{*}}\rangle ^{2}$).

It remains only to iteratively calculate $z_{\mathcal{J}}^{T}A_{\mathcal{J}}\mathcal{M}_{\mathcal{J}}A_{\mathcal{J}}^{T}z_{\mathcal{J}}$. If we define $P_{\mathcal{J}} := A\mathcal{V}_{\mathcal{J}}\Lambda_{\mathcal{J}}^{\frac{1}{2}}$ and $\tilde{M}_{\mathcal{J}} := (I + P_{\mathcal{J}}^{T}P_{\mathcal{J}})^{-1}$, then
\begin{align*}
A_{\mathcal{J}}M_{\mathcal{J}}A_{\mathcal{J}}^{T} &= A\mathcal{V}_{\mathcal{J}}(\Lambda_{\mathcal{J}}^{-1} + \mathcal{V}_{\mathcal{J}}^{T}A^{T}A\mathcal{V}_{\mathcal{J}})^{-1}\mathcal{V}_{\mathcal{J}}^{T}A^{T}\\
&=:P_{\mathcal{J}}\tilde{M}_{\mathcal{J}}P_{\mathcal{J}}^{T}.
\end{align*}
In this form, we can make use of the following lemma:
\begin{lemma}\label{lemma:greedy}
Let $x \in \mathbb{R}^{m}$, $P_{k} := [p_{1}, \ldots, p_{k}] \in \mathbb{R}^{m \times k}$, and $\tilde{M}_{k} := (I_{k} + P_{k}^{T}P_{k})^{-1} \in \mathbb{R}^{k \times k}$. Define $\Delta_{k + 1} := 1 + p_{k+1}^{T}p_{k+1} - p_{k+1}^{T}P_{k}\tilde{M}_{k}P_{k}^{T}p_{k+1}$. Then
\begin{align*}
\tilde{M}_{k + 1} &= \frac{1}{\Delta_{k+1}}\begin{pmatrix} \Delta_{k+1} \tilde{M}_{k} + (\tilde{M}_{k}P_{k}^{T}p_{k +1})(\tilde{M}_{k}P_{k}^{T}p_{k +1})^{T} & -\tilde{M}_{k}P_{k}^{T}p_{k+1}\\
-p_{k+1}P_{k}\tilde{M}_{k} & 1
\end{pmatrix}.\\
x^{T}P_{k+1}\tilde{M}_{k+1}P_{k+1}^{T}x  &=  x^{T}P_{k}\tilde{M}_{k}P_{k}^{T}x + \frac{1}{\Delta_{k+1}}(x^{T}P_{k}\tilde{M}_{k}P_{k}^{T}p_{k+1} - x^{T}p_{k+1})^{2}.
\end{align*}
\end{lemma}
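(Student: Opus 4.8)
The plan is to prove Lemma \ref{lemma:greedy} by two applications of the Sherman–Morrison–Woodbury block‑matrix inversion formula, followed by a direct expansion of the resulting quadratic form. The key observation is that $P_{k+1}^{T}P_{k+1}$ has the block structure
\[
P_{k+1}^{T}P_{k+1} = \begin{pmatrix} P_{k}^{T}P_{k} & P_{k}^{T}p_{k+1}\\ p_{k+1}^{T}P_{k} & p_{k+1}^{T}p_{k+1}\end{pmatrix},
\]
so that $I_{k+1} + P_{k+1}^{T}P_{k+1}$ is a bordered matrix whose leading $k\times k$ block is exactly $\tilde M_{k}^{-1}$. Applying the standard formula for the inverse of a $2\times 2$ block matrix in terms of the Schur complement of the leading block, the Schur complement is the scalar $1 + p_{k+1}^{T}p_{k+1} - p_{k+1}^{T}P_{k}\tilde M_{k}P_{k}^{T}p_{k+1} = \Delta_{k+1}$, and reading off the four blocks immediately yields the claimed expression for $\tilde M_{k+1}$.

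First I would set $a := P_{k}^{T}p_{k+1} \in \mathbb{R}^{k}$ and $c := p_{k+1}^{T}p_{k+1} \in \mathbb{R}$, so that $\Delta_{k+1} = 1 + c - a^{T}\tilde M_{k}a$, and write $I_{k+1}+P_{k+1}^{T}P_{k+1} = \begin{pmatrix}\tilde M_{k}^{-1} & a\\ a^{T} & 1+c\end{pmatrix}$. Then I would invoke the block‑inverse formula: the $(1,1)$ block is $\tilde M_{k} + \tilde M_{k}a\,\Delta_{k+1}^{-1}\,a^{T}\tilde M_{k}$, the $(1,2)$ block is $-\Delta_{k+1}^{-1}\tilde M_{k}a$, the $(2,1)$ block its transpose, and the $(2,2)$ block is $\Delta_{k+1}^{-1}$. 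Substituting $a = P_{k}^{T}p_{k+1}$ and factoring out $1/\Delta_{k+1}$ reproduces the displayed matrix verbatim; one only needs to check that this is genuinely the inverse (i.e. $\tilde M_{k+1}$ exists), which follows because $I_{k+1}+P_{k+1}^{T}P_{k+1}\succ 0$ and hence $\Delta_{k+1} > 0$ as the Schur complement of a positive definite matrix.

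For the second identity, I would compute $x^{T}P_{k+1}\tilde M_{k+1}P_{k+1}^{T}x$ by writing $P_{k+1}^{T}x = \begin{pmatrix} P_{k}^{T}x\\ p_{k+1}^{T}x\end{pmatrix} =: \begin{pmatrix} w\\ s\end{pmatrix}$ and expanding the quadratic form $\begin{pmatrix}w^{T} & s\end{pmatrix}\tilde M_{k+1}\begin{pmatrix}w\\ s\end{pmatrix}$ block by block using the expression just derived. Collecting terms, the $w$–$w$ part contributes $w^{T}\tilde M_{k}w + \Delta_{k+1}^{-1}(w^{T}\tilde M_{k}a)^{2}$, the cross terms contribute $-2\Delta_{k+1}^{-1}s\,(w^{T}\tilde M_{k}a)$, and the $s$–$s$ part contributes $\Delta_{k+1}^{-1}s^{2}$, which combine into $w^{T}\tilde M_{k}w + \Delta_{k+1}^{-1}\bigl(w^{T}\tilde M_{k}a - s\bigr)^{2}$. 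Recognising $w^{T}\tilde M_{k}w = x^{T}P_{k}\tilde M_{k}P_{k}^{T}x$, $w^{T}\tilde M_{k}a = x^{T}P_{k}\tilde M_{k}P_{k}^{T}p_{k+1}$, and $s = x^{T}p_{k+1}$ gives exactly the stated recursion.

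The calculations here are entirely routine; the only thing requiring care is bookkeeping the signs and the factor $1/\Delta_{k+1}$ consistently between the matrix identity and the scalar identity, and confirming $\Delta_{k+1}\neq 0$ so that all inverses are well defined. If one prefers to avoid quoting the block‑inverse formula as a black box, an equally short alternative is to verify the claimed $\tilde M_{k+1}$ directly by multiplying it against $I_{k+1}+P_{k+1}^{T}P_{k+1}$ and checking the product is $I_{k+1}$; this is a purely mechanical check that sidesteps any derivation. I would present the Woodbury/Schur route as the main argument since it also explains where the formula comes from, and simply remark that the scalar identity is the Schur‑complement update for the quadratic form.
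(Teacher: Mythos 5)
Your proposal is correct and follows the same route as the paper's proof: identify $I_{k+1}+P_{k+1}^{T}P_{k+1}$ as a bordered matrix whose leading block is $\tilde{M}_{k}^{-1}$, apply block-wise inversion with Schur complement $\Delta_{k+1}$, and then expand the quadratic form with $P_{k+1}^{T}x = \bigl(\,(P_{k}^{T}x)^{T},\; p_{k+1}^{T}x\,\bigr)^{T}$. Your additional remark that $\Delta_{k+1}>0$ (hence the inverse is well defined) by positive definiteness is a small welcome addition the paper leaves implicit.
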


The proof of this lemma can be found in Appendix \ref{proof:greedy}.\\

Using this lemma, we can iteratively calculate the objective function as follows. Let $\mathcal{J}_{k} \subseteq \{1, \ldots , n\}$ with $|\mathcal{J}_{k}| = k$ and $k < J$. Define $p_{j} := \lambda_{j}^{\frac{1}{2}}Av_{j}$, $P_{k} := [p_{\mathcal{J}_{1}}, \ldots, p_{\mathcal{J}_{k}}] \in \mathbb{R}^{m \times k}$, $\tilde{M}_{k} := (I_{k} + P_{k}^{T}P_{k})^{-1} \in \mathbb{R}^{k \times k}$, $\mathcal{V}_{k} = [v_{\mathcal{J}_{1}}, \ldots, v_{\mathcal{J}_{k}}] \in \R^{n \times k}$, $\Lambda_{k} = \mbox{diag}(\lambda_{\mathcal{J}_{1}}, 
\ldots, \lambda_{\mathcal{J}_{k}}) \in \R^{k \times k}$. Then

\begin{align*}
\norma{z_{\mathcal{J}_{k+1}}}^{2} + \mu^{T}R^{\dagger}_{\mathcal{J}^{c}_{k+1}}\mu &- z_{\mathcal{J}_{k+1}}^{T}A_{\mathcal{J}_{k+1}}M_{\mathcal{J}_{k+1}}A_{\mathcal{J}_{k+1}}^{T}z_{\mathcal{J}_{k+1}}\\
&= \norma{z_{\mathcal{J}_{k+1}}}^{2} + \mu^{T}R^{\dagger}_{\mathcal{J}^{c}_{k+1}}\mu - z_{\mathcal{J}_{k+1}}^{T}P_{\mathcal{J}_{k+1}}\tilde{M}_{\mathcal{J}_{k+1}}P_{\mathcal{J}_{k+1}}^{T}z_{\mathcal{J}_{k+1}}\\
&= \norma{z_{\mathcal{J}_{k+1}}}^{2} + \mu^{T}R^{\dagger}_{\mathcal{J}^{c}_{k+1}}\mu -z_{\mathcal{J}_{k+1}}^{T}P_{\mathcal{J}_{k}}\tilde{M}_{\mathcal{J}_{k}}P_{\mathcal{J}_{k}}^{T}z_{\mathcal{J}_{k+1}} \\
&\quad - \frac{(z_{\mathcal{J}_{k+1}}^{T}P_{\mathcal{J}_{k}}\tilde{M}_{\mathcal{J}_{k}}P_{\mathcal{J}_{k}}^{T}p_{\mathcal{J}_{k+1}} - z_{\mathcal{J}_{k+1}}^{T}p_{\mathcal{J}_{k+1}})^{2}}{1 + p_{k+1}^{T}p_{k+1} - p_{k+1}^{T}P_{\mathcal{J}_{k}}\tilde{M}_{\mathcal{J}_{k}}P_{\mathcal{J}_{k}}^{T}p_{k+1}}
\end{align*}

This leads naturally to the Algorithm \ref{alg:selectJ}, a greedy algorithm for iteratively selecting indices for the index set $\mathcal{J}$ as defined in Theorem \ref{thm:main thm}.

\begin{varalgorithm}{Greedy}
\caption{Greedy algorithm for selecting eigenvectors}\label{alg:selectJ}
\begin{algorithmic}
\State \textbf{Initialise: }
\State $k \leftarrow 1$
\State $p_{i} \leftarrow \lambda_{i}^{\frac{1}{2}}Av_{i} \in \mathbb{R}^{m}, \quad i = 1, \ldots, n$
\State $\mathcal{J} \leftarrow \{\}$
\State $\mathcal{J}^{c} \leftarrow \{1, \ldots, n\} $
\State $P \leftarrow []$
\State $\tilde{M} \leftarrow []$
\State $z \leftarrow y$
\While{k < J}
\State $k \leftarrow k + 1$
\State $\mbox{min. val.} \leftarrow +\infty$
\For{$j \in \mathcal{J}^{c}$}
\State  $\tilde{z} = z - (\mu^{T}v_{j})Av_{j}$
\State $\mbox{val. } \leftarrow -2(\mu^{T}v_{j})(z^{T} Av_{j}) + (\mu^{T}v_{j})^{2}(\norma{Av_{j}}^{2} - \frac{1}{\lambda{j}}) - \tilde{z}^{T}P\tilde{M}P^{T}\tilde{z} -\frac{(\tilde{z}^{T}P\tilde{M}P^{T}p_{i} - \tilde{z}^{T}p_{i})^{2}}{1 + p_{i}^{T}p_{i} - p_{i}^{T}P\tilde{M}P^{T}p_{i}}$
\If{\mbox{val.} < \mbox{min. val.}}
\State $\mbox{min. val.} \leftarrow \mbox{val.}$
\State $j^{*} \leftarrow j$
\EndIf
\EndFor
\State $\mathcal{J} \leftarrow \mathcal{J} \cup \{j^{*}\}$
\State $\mathcal{J}^{c} \leftarrow \mathcal{J}^{c} \backslash \{j^{*}\}$
\State $\Delta \leftarrow 1 + p_{j^{*}}^{T}p_{j^{*}} - p_{j^{*}}^{T}P\tilde{M}P^{T}p_{j^{*}}$
\State $\tilde{M} \leftarrow \frac{1}{\Delta}\begin{pmatrix} \Delta \tilde{M} + (\tilde{M}P^{T}p_{j^{*}})(\tilde{M}P^{T}p_{j^{*}})^{T} & -\tilde{M}P^{T}p_{j^{*}}\\
-p_{j^{*}}^{T}P\tilde{M} & 1
\end{pmatrix} \in \mathbb{R}^{k \times k}$
\State $P \leftarrow \begin{bmatrix} P & p_{j^{*}}\end{bmatrix} \in \mathbb{R}^{m \times k}$
\State $z \leftarrow z - \langle\mu, v_{j^{*}}\rangle Av_{j^{*}}$
\EndWhile
\State $M_{\mathcal{J}} \leftarrow \Lambda_{\mathcal{J}}^{\frac{1}{2}}\tilde{M}\Lambda_{\mathcal{J}}^{\frac{1}{2}}$
\end{algorithmic}
\end{varalgorithm}

This algorithm does not, in general, select an optimal subset $\mathcal{J}$. We do, however, show a sufficient condition on the forward operator $A$ that ensures that Algorithm \ref{alg:selectJ} selects an optimal $\mathcal{J}$.

\begin{proposition}\label{proposition:optimal_case}
 Under the definitions and assumptions of Theorem \ref{thm:main thm}, if $R = \mathcal{V}\Lambda\mathcal{V}^{T}$ and A has singular value decomposition given by $A = \mathcal{U}\Sigma\mathcal{V}^{T}$, then Algorithm \ref{alg:selectJ} finds an optimal subset $\mathcal{J}$ , i.e. a $\mathcal{J}$ that minimises $z_{\mathcal{J}}^{T}z_{\mathcal{J}} - z_{\mathcal{J}}^{T}A_{\mathcal{J}}\mathcal{M}_{\mathcal{J}}A_{\mathcal{J}}^{T}z_{\mathcal{J}} + \mu^{T}R_{\mathcal{J}^{c}}^{\dagger}\mu$.
\end{proposition}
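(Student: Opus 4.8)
The plan is to reduce the combinatorial problem of minimising \eqref{reduced_eqn_to_optimise} over index sets to a trivial selection problem: under the stated hypothesis, \eqref{reduced_eqn_to_optimise} turns out to be a \emph{modular} (additively separable) function of $\mathcal{J}$, and the greedy rule in Algorithm \ref{alg:selectJ} performs exactly marginal-gain minimisation, so it necessarily collects the $J$ indices of largest individual contribution, which is globally optimal.

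The first and only substantive step is the algebraic reduction. Since $R = \mathcal{V}\Lambda\mathcal{V}^{T}$ and $A = \mathcal{U}\Sigma\mathcal{V}^{T}$ share the orthonormal basis $\mathcal{V}$, we may write $Av_{i} = \sigma_{i}u_{i}$ with the $u_{i}$ orthonormal (and $\sigma_{i}=0$, $Av_{i}=0$ for indices beyond $\mathrm{rank}(A)$). Then $A_{\mathcal{J}} = A\mathcal{V}_{\mathcal{J}} = \mathcal{U}_{\mathcal{J}}\Sigma_{\mathcal{J}}$ with $\Sigma_{\mathcal{J}} := \mathrm{diag}(\sigma_{j})_{j\in\mathcal{J}}$, hence $A_{\mathcal{J}}^{T}A_{\mathcal{J}} = \Sigma_{\mathcal{J}}^{2}$ and $M_{\mathcal{J}} = (\Lambda_{\mathcal{J}}^{-1} + \Sigma_{\mathcal{J}}^{2})^{-1}$ is diagonal (well defined since $\Lambda_{\mathcal{J}} \succ 0$). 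Substituting $z_{\mathcal{J}} = y - \sum_{j\in\mathcal{J}}(v_{j}^{T}\mu)\sigma_{j}u_{j}$ and using orthonormality of the $u_{j}$, every cross term between distinct indices cancels, and together with $\mu^{T}R^{\dagger}_{\mathcal{J}^{c}}\mu = |\mu|_{R}^{2} - \sum_{j\in\mathcal{J}}(v_{j}^{T}\mu)^{2}/\lambda_{j}$ one obtains
\[
z_{\mathcal{J}}^{T}z_{\mathcal{J}} - z_{\mathcal{J}}^{T}A_{\mathcal{J}}M_{\mathcal{J}}A_{\mathcal{J}}^{T}z_{\mathcal{J}} + \mu^{T}R^{\dagger}_{\mathcal{J}^{c}}\mu \;=\; |y|^{2} + |\mu|_{R}^{2} - \sum_{j\in\mathcal{J}} g_{j},
\]
where
\[
g_{j} := 2\sigma_{j}(v_{j}^{T}\mu)(u_{j}^{T}y) - \sigma_{j}^{2}(v_{j}^{T}\mu)^{2} + \frac{\lambda_{j}\sigma_{j}^{2}\,(u_{j}^{T}y - \sigma_{j}v_{j}^{T}\mu)^{2}}{1 + \lambda_{j}\sigma_{j}^{2}} + \frac{(v_{j}^{T}\mu)^{2}}{\lambda_{j}}.
\]
The precise form of $g_{j}$ is immaterial; what matters is that $g_{j}$ depends on $j$ alone, so that \eqref{reduced_eqn_to_optimise}, viewed as a function of $\mathcal{J}$, is a constant minus a sum of per-index scalars. (Indices with $\sigma_{j}=0$ simply contribute $g_{j} = (v_{j}^{T}\mu)^{2}/\lambda_{j}$, so separability is unaffected by rank deficiency of $A$.)

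It remains to identify the greedy step as marginal-gain minimisation for the objective $f(\mathcal{J})$ given by \eqref{reduced_eqn_to_optimise}. This is already contained in the derivation preceding Algorithm \ref{alg:selectJ}: using $A_{\mathcal{J}}M_{\mathcal{J}}A_{\mathcal{J}}^{T} = P_{\mathcal{J}}\tilde M_{\mathcal{J}}P_{\mathcal{J}}^{T}$ and Lemma \ref{lemma:greedy} with $x$ equal to the updated residual $z_{\mathcal{J}_{k}\cup\{j\}}$, the quantity ``val.'' computed in the inner loop at a candidate $j$ equals $f(\mathcal{J}_{k}\cup\{j\})$ minus the $j$-independent constant $|z_{\mathcal{J}_{k}}|^{2} + \mu^{T}R^{\dagger}_{\mathcal{J}_{k}^{c}}\mu$. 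Hence the index added at iteration $k+1$ lies in $\argmin_{j\notin\mathcal{J}_{k}} f(\mathcal{J}_{k}\cup\{j\})$, i.e. it achieves the largest decrease of $f$. By the separability just established, $f(\mathcal{J}_{k}\cup\{j\}) = f(\mathcal{J}_{k}) - g_{j}$, so this decrease equals $g_{j}$, independently of $\mathcal{J}_{k}$; therefore the greedy sweep adds, in order, the indices with the largest, second-largest, \ldots, $J$-th largest $g_{j}$, and its output $\mathcal{J}$ satisfies $\sum_{j\in\mathcal{J}}g_{j} = \max_{|\mathcal{J}'|=J}\sum_{j\in\mathcal{J}'}g_{j}$, equivalently $f(\mathcal{J}) = \min_{|\mathcal{J}'|=J} f(\mathcal{J}')$. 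Ties among the $g_{j}$ are harmless, as any tie-breaking still yields a maximiser. This is the claimed optimality.

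The main obstacle — and really the only place where the hypothesis is used in an essential way — is the algebraic collapse in the second paragraph: one must verify carefully that simultaneous diagonalisation of $A$ and $R$ makes all inter-index couplings in $z_{\mathcal{J}}^{T}z_{\mathcal{J}}$ and in $z_{\mathcal{J}}^{T}A_{\mathcal{J}}M_{\mathcal{J}}A_{\mathcal{J}}^{T}z_{\mathcal{J}}$ vanish, so that \eqref{reduced_eqn_to_optimise} is genuinely a sum over $j\in\mathcal{J}$ of quantities each depending on a single index. Once \eqref{reduced_eqn_to_optimise} is seen to be modular, the remaining two ingredients — the identification of ``val.'' with the marginal gain (already essentially done in the text) and the optimality of greedy for a modular objective under a cardinality constraint — are routine.
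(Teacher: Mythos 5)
Your proposal is correct and takes essentially the same approach as the paper: simultaneous diagonalisation of $A$ and $R$ makes the objective \eqref{reduced_eqn_to_optimise} additively separable over indices (your $g_j$ agrees, up to sign and a constant, with the paper's per-index summand in its Equation (B.3)/\eqref{eq:J objective specific A}), and the quantity ``val.'' in Algorithm \ref{alg:selectJ} reduces to exactly this per-index contribution, so the greedy selection of the $J$ best indices is globally optimal. The only cosmetic difference is that you reach the identification of ``val.'' with the marginal gain via the general update identity (Lemma \ref{lemma:greedy} plus modularity), whereas the paper verifies the collapse directly using the orthogonality of the vectors $p_k=\lambda_k^{1/2}\sigma_k u_k$; the substance of the argument is the same, and your explicit treatment of ties and of indices with $\sigma_j=0$ is a welcome extra care.
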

The proof of the proposition can be found in Appendix \ref{proof:optimal_case}.

\subsection{Nonlinear Forward Operator and Resampling}\label{sec:Nonlinear Forward Operator and Resampling}
We now generalise our approach to allow resampling of the ensemble and to handle nonlinear forward operators. Suppose we wish to use EKI to generate a set of particle trajectories $\{u_{t}^{(i)}\}_{i = 1}^{J}$ to minimise the usual objective function
\begin{equation*}
\frac{1}{2}\norma{G(u) - y}^{2} + \frac{1}{2}\norma{u}_{R}^{2}.
\end{equation*}
Suppose at $t_{0}$ we wish to resample the J particles. We write $\{u^{(i)}_{t_{0}^{-}} \}_{i = 1}^{J}$ for the particle positions at time $t_0$ before resampling and $\{u^{(i)}_{t_{0}^{+}} \}_{i = 1}^{J}$ for the particle positions at time $t_0$ after resampling. One approach to resampling would be to linearise about $\bar{u}_{t_{0}^{-}}$ in order to apply our ensemble selection results for linear forward operators , i.e.
\begin{align}\label{eq:linearised objective}
\frac{1}{2}\norma{G(\bar{u}_{t_{0}} + \tilde{u}) - y}^{2} + \frac{1}{2}\norma{\bar{u}_{t_{0}} + \tilde{u}}_{R}^{2} &\approx \frac{1}{2}\norma{\nabla G(\bar{u}_{t^{-}_{0}})\tilde{u} - (y - G(\bar{u}_{t^{-}_{0}}))}^{2} + \frac{1}{2}\norma{\bar{u}_{t^{-}_{0}} + \tilde{u}}_{R}^{2} \nonumber\\
&= \frac{1}{2}\norma{A\tilde{u} - \tilde{y}}^{2} + \frac{1}{2}\norma{ \tilde{u} - \mu}_{R}^{2},
\end{align}
where $A := \nabla G(\bar{u}_{t^{-}_{0}})$, $\tilde{y} := y -  G(\bar{u}_{t^{-}_{0}})$ and $\mu := -\bar{u}_{t^{-}_{0}}$. With these defined parameters, we can use Algorithm \ref{alg:selectJ} and Corollary \ref{cor:optimality_condition} to generate $\mathcal{J}$, B and set $u^{(i)}_{t_{0}^{+}} := \bar{u}_{t^{-}_{0}} + \tilde{u}^{(i)}_{t_{0}} := \bar{u}_{t^{-}_{0}} + \mathcal{V}_{\mathcal{J}}Be_{i}$ before continuing to run EKI.

This is a heuristic approach, which linearises a general forward operator in order to apply the linear results. Since we linearise around the point $\bar{u}_{t_{0}^{-}}$, we may want the resampled points $\{ u^{(i)}_{t_{0}^{+}} \}_{i=1}^{J}$ to remain close, so that the linearisation is a good approximation. The total (squared) distance to $\bar{u}_{t_0^{-}}$ is given by
\begin{equation*}
\sum_{i = 1}^{J} \norma{u^{(i)}_{t^{+}_{0}} - \bar{u}^{(i)}_{t^{-}_{0}}}^{2} = \sum_{i = 1}^{J} \norma{\mathcal{V}_{\mathcal{J}}Be_{i}}^{2} = \mbox{trace}(B^{T}B).
\end{equation*}
As such, we seek a B that solves the following minimisation problem:
\begin{equation*}
\min\mbox{trace}(B^{T}B)\mbox{ : }1 = 1_{J}^{T}B^{-1}M_{\mathcal{J}}\mathcal{V}_{\mathcal{J}}^{T}(A^{T}\tilde{y} + R_{\mathcal{J}}^{\dagger}\mu).\\
\end{equation*}
To this end, Lemma \ref{lemma:minimal trace} provides an approximate solution to this minimisation problem. Namely, let $\tilde{z} := M_{\mathcal{J}}\mathcal{V}_{\mathcal{J}}^{T}(A^{T}\tilde{y} + R_{\mathcal{J}}^{\dagger}\mu)$ and let $U \in \mathbb{R}^{J \times J}$ be an orthogonal matrix such that $U1_{J} / \sqrt{J}  =   \tilde{z}/\norma{\tilde{z}}$. Then 
\begin{equation*}
B := \sqrt{J} \norma{\tilde{z}}U.
\end{equation*}
In this case we have that 
\begin{equation*}
\mbox{trace}(B^{T}B) = J^{2} \norma{\tilde{z}}^{2}.
\end{equation*}

Alternatively, we can choose B to instead approximately minimise the total (squared) distance to $\bar{u}_{t_0^{-}}$ in projection space, i.e.
\begin{equation*}
\sum_{i=1}^{J}\norma{\begin{pmatrix} G(u^{(i)}_{t_{0}^{+}}) \\ R^{-\frac{1}{2}}u^{(i)}_{t_{0}^{+}}\end{pmatrix} - \begin{pmatrix} G(u^{(i)}_{t_{0}^{-}}) \\ R^{-\frac{1}{2}}u^{(i)}_{t_{0}^{-}}\end{pmatrix}}^{2} \approx \sum_{i=1}^{J}\norma{\begin{pmatrix} \nabla G(\bar{u}^{(i)}_{t_{0}})V_{\mathcal{J}}B \\ R^{-\frac{1}{2}}V_{\mathcal{J}}B\end{pmatrix}}^{2} = \mbox{trace}(B^{T}M_{\mathcal{J}}^{-1}B).
\end{equation*}

This leads to the following minimisation problem
\begin{equation*}
\min\mbox{trace}(B^{T}\mathcal{M}_{\mathcal{J}}^{-1}B)\mbox{ : }1 = 1_{J}^{T}B^{-1}M_{\mathcal{J}}\mathcal{V}_{\mathcal{J}}^{T}(A^{T}\tilde{y} + R_{\mathcal{J}}^{\dagger}\mu).\\
\end{equation*}

Again, Lemma \ref{lemma:minimal trace} provides an approximate solution to this minimisation problem. Let $U \in \mathbb{R}^{J \times J}$ be an orthogonal matrix such that $U1_{J} / \sqrt{J}  =   M_{\mathcal{J}}^{-\frac{1}{2}}\tilde{z}/\norma{M_{\mathcal{J}}^{-\frac{1}{2}}\tilde{z}}$. Then

\begin{equation*}
B := \sqrt{J} \norma{M_{\mathcal{J}}^{-\frac{1}{2}}\tilde{z}}M_{\mathcal{J}}^{\frac{1}{2}}U.
\end{equation*}
In this case we have that 
\begin{equation*}
\mbox{trace}(B^{T}\mathcal{M}_{\mathcal{J}}^{-1}B) = J^{2} \norma{M_{\mathcal{J}}^{\frac{1}{2}}\tilde{z}}^{2}.
\end{equation*}

In either cases, it is clear that these Bs chosen to minimise the various traces are optimal both in the sense of Corollary \ref{cor:optimality_condition} and the stronger sense of Corollary \ref{cor:double optimal B condition}.

These results lead to Algorithm \ref{alg:adaptive ensemble selection} for performing EKI for  $t \in [0, t_{N + 1}]$ with resampling at times $t_{1} < \dots < t_{N}$.
\begin{varalgorithm}{AdaptSelect}
\caption{Adaptive ensemble selection}\label{alg:adaptive ensemble selection}
\begin{algorithmic}
\State \textbf{Initialise: }
\State $\mu \mbox{ e.g. } \mu \leftarrow 0$\\
\State $0 = t_0 < \ldots < t_{N} < t_{N + 1}$\\
\For{k = 1:N }
\State $A \leftarrow \nabla  G(-\mu)$\\
\State $\tilde{y} \leftarrow y -  G(-\mu)$\\
\State $\mathcal{J}, M_{\mathcal{J}} \leftarrow Algorithm \ref{alg:selectJ}(A, \tilde{y}, \mu)$\\
\State $\tilde{z} \leftarrow M_{\mathcal{J}}\mathcal{V}_{\mathcal{J}}^{T}(A^{T}\tilde{y} + R_{\mathcal{J}}^{\dagger}\mu)$\\
\State $B \leftarrow \sqrt{J}\norma{\tilde{z}}_{2}Householder(1_{J} / \sqrt{J}, \quad \tilde{z} / \norma{\tilde{z}})$\\
\State $[u^{(1)}, \ldots, u^{(J)}] \leftarrow -[\mu, \ldots , \mu] + \mathcal{V}_{\mathcal{J}}B$\\
\State $\{u^{(i)}\}_{i = 1}^{J} \leftarrow EKI(t_{k+1} - t_{k}, \{u^{(i)}\}_{i = 1}^{J}, G, R, y)$\\
\State $\mu \leftarrow -\frac{1}{J}\sum_{i=1}^{J}u^{(i)}$\\
\EndFor\\
\end{algorithmic}
\end{varalgorithm}

Algorithm \ref{alg:adaptive ensemble selection} is a largely derivative-free method, only requiring derivative calculations at initialisation and at resample times. For problems in which derivative calculations are expensive, Algorithm \ref{alg:adaptive ensemble selection} with minimal resampling could therefore be significantly more efficient computationally than standard optimisation methods that rely heavily on derivative calculations. In fact, if we were to use a derivative-free linearisation in Equation \eqref{eq:linearised objective}, Algorithm \ref{alg:adaptive ensemble selection} itself would become a completely derivative-free method.
 
\section{Numerical Experiments}\label{sec:NumExp}
In the following, we numerically test the proposed EKI methods for linear and nonlinear problems. 
\subsection{Set-up}\label{subsubsection:set-up}
We run our experiments according to the following generalised scheme. We first generate the true solution $u_{\text{true}}$ and define the observation data as $y :=  G(u_{\text{true}}) + \eta$, where $\eta$ represents observational noise. Using the same data, we execute variants of Algorithm \ref{alg:adaptive ensemble selection} to minimise the usual objective function

\begin{equation*}\label{eq:weighted objective}
\Phi(u) := \frac{1}{2}\norma{ G(u) - y}^{2} + \frac{\beta}{2}\norma{u}_{R}^{2}.
\end{equation*}

The variants of Algorithm \ref{alg:adaptive ensemble selection} differ in their (re-)sampling times $0 = t_0 < \ldots < t_N$, in their methods for selecting $\mathcal{J}$ and in their methods for constructing B. In our numerical experiments, we investigate the following variants:
\begin{itemize}
    \item \textbf{greedy} (greedy): at $t = t_{0}$, $\mathcal{J}$ is selected according to Algorithm \ref{alg:selectJ} and B is constructed according to Corollary \ref{cor:double optimal B condition}.
    \item \textbf{dominant} (dom): $\mathcal{J}$ consists of the indices of the dominant eigenvalues and B is constructed according to \ref{cor:double optimal B condition}. This variant is included to evaluate, by way of comparison with the standard method (defined below), the effect of constructing B instead according to Corollary \ref{cor:double optimal B condition}. 
    \item \textbf{random} (rand): at $t = t_{0}$, $\mathcal{J}$ is chosen randomly and B is constructed according to Corollary \ref{cor:double optimal B condition}. This variant is included as a baseline to evaluate, by way of comparison, the effect of different methods for selecting $\mathcal{J}$. 
    \item \textbf{standard} (stand): at $t = t_{0}$, $\mathcal{J}$ is chosen to be the indices of the dominant eigenvalues and $B := \Lambda_{\mathcal{J}}^{\frac{1}{2}}$, i.e. the standard initialisation method.
    \item \textbf{optimal} (opt): this is only included if both the forward operator is linear and $\begin{pmatrix} n\\J \end{pmatrix}$ is small. For every possible index set $\mathcal{J} \subseteq \{ 1, \dots , n\}$ of size J, B is constructed according to Corollary \ref{cor:double optimal B condition} and the resulting long-term value of the objective function is calculated. $\mathcal{J}$ and B are then taken to be the best performing pair of index set and linear combination. This variant is included to evaluate, by way of comparison, the performance of different methods for selecting $\mathcal{J}$.
    \item \textbf{greedy resampling} (greedy\_r): at times $t = t_{0}, \ldots, t_{N}$, $\mathcal{J}$ is selected according to Algorithm \ref{alg:selectJ} and B is constructed according to Corollary \ref{cor:double optimal B condition}.
    \item \textbf{dominant resampling} (dom\_r): at times $t = t_{0}, \ldots, t_{N}$, $\mathcal{J}$ consists of the indices of the dominant eigenvalues, while B is constructed according to Corollary \ref{cor:double optimal B condition}. This variant is included to evaluate, by way of comparison with the standard method, the performance of the selecting B according to Corollary \ref{cor:double optimal B condition}.
\end{itemize}

We also investigate the performance of the different variants of the algorithm across various values for the following parameters:

\begin{itemize} 
\item \textbf{J} : The number of particles.
\item $\boldsymbol{\beta}$ : The relative weight of the prior/regularisation term in the objective function given in Equation \eqref{eq:weighted objective}.
\end{itemize}

The value of the objective function in Equation \eqref{eq:weighted objective} at the final time $t_{N + 1}$ for each variant is denoted by an $r$ with a superscript indicating the variant, e.g. $r^{\text{\tiny greedy}} := \Phi(\bar{u}_{t_{N + 1}}^{\text{\tiny greedy}})$.

For comparison, we also define $r^{\text{\tiny min}} := \min_{u}\Phi(u)$. In the case that the forward operator is linear, $r^{\text{\tiny min}}$ is explicitly calculated; otherwise, it is be approximated using a numerical optimiser (more specifically, the fmincon MATLAB optimiser). We present our results as a ratio, e.g. $\frac{r^{\text{\tiny min}}}{r^{\text{\tiny greedy}}}$. 

For a given variant of the algorithm and a given parameter configuration, we evaluate its performance across a range of random data points, and potentially across a range of forward operators and/or covariance matrices. As such, for each parameter configuration, we run $n_{exp}$ experiments, in each of which we generate a random data point, and potentially a random forward operator and/or covariance matrix, and run each variant of the EKI method on this given data point. The results of these experiments are indexed by $1 \le i \le n_{\scalebox{0.75}{\mbox{exp}}}$, e.g. $\frac{r^{\text{\scalebox{1}{min}}}_{i}}{r^{\text{\scalebox{1}{greedy}}}_{i}}$ for $i = 1, \ldots, n_{ \scalebox{0.75}{\mbox{exp}}}$. We then average across these experiments, e.g. $\frac{r^{\text{\scalebox{1}{min}}}}{r^{\text{\scalebox{1}{greedy}}}} := \frac{1}{n_{\scalebox{0.5}{\mbox{exp}}}}\sum_{i=1}^{n_{\scalebox{0.5}{\mbox{exp}}}}\frac{r^{\text{\scalebox{1}{min}}}_{i}}{r^{\text{ \scalebox{1}{greedy}}}_{i}}$.

To assess the quality of a given index set $\mathcal{J}$ - e.g. $\mathcal{J}^{\text{\scalebox{1}{greedy}}}$ - for each of the $n_{\scalebox{0.75}{\mbox{exp}}}$ experiments (indexed by i), we may also generate $n_{\scalebox{0.5}{\mbox{rand}}}$  random subsets of $\{1, \ldots, n\}$ of size J, calculate the resulting values of the objective (indexed by j), and compute the percentage of $r_i^{\text{\tiny rand}_j}$ values greater than the objective function of a given variant, e.g. $r_i^{\text{\tiny greedy}}$. This is denoted as, for example, $\% \, r_i^{\text{\tiny rand}} \geq r_i^{\text{\tiny greedy}}$. Averaging over experiments, we define, for example, $\% \, r^{\text{\tiny rand}} \geq r^{\text{\tiny greedy}} := \frac{1}{n_{\scalebox{0.5}{\mbox{exp}}}}\sum_{i=1}^{n_{\scalebox{0.5}{\mbox{exp}}}}(\% \, r_i^{\text{\tiny rand}} \geq r_i^{\text{\tiny greedy}})$.

\subsection{Linear Forward Operator}
We focus first on the linear setting.
\subsubsection{Experiment Set-up}\label{subsubsec:linear set-up}

We conduct 250 numerical experiments ($n_{\scalebox{0.75}{\mbox{exp}}} = 250$). In each experiment, a minimisation problem is randomly generated with the following objective function:

\begin{equation}\label{linear_obj}
\Phi_{i}(u) = \frac{1}{2}\norma{A_i u - y_i}^2 + \frac{\beta}{2}\norma{u}_{R_i}^2 \quad \text{for }, i = 1, \dots, 250.
\end{equation}

The elements of the linear forward operators $A_i \in \mathbb{R}^{30 \times 50}$ are independently and identically distributed (i.i.d.) according to a uniform distribution on $[0, 1]$.

To generate $R_i$, we first sample $P_i \in O(50)$ uniformly from the space of real orthogonal $50 \times 50$ matrices $O(50)$, and then define $R_i := P_i \Sigma P_i^\top$, where $\Sigma$ is a diagonal matrix with diagonal entries decaying according to

\begin{equation*}
\sigma_{k} = (1 + k)^{-2}, \quad k = 1, \dots, 50.    
\end{equation*}

To generate $y_i \in \mathbb{R}^{30}$, we first randomly generate $u_i \in \mathbb{R}^{50}$ by sampling i.i.d. from $\mathcal{N}(0, R_i)$. We then set $y_i := A_i u_i + 10^{-4}\eta_i$, where $\eta_i \sim \mathcal{N}(0, I_{30})$ are i.i.d.

Additionally, for each experiment, we generate 200 random subsets of size J ($n_{\scalebox{0.75}{\mbox{rand}}} = 200$), yielding $r_i^{\text{\tiny rand}_j}$ for $j = 1, \dots, 200$. The average residual for these random subsets is defined as $r_i^{\text{\tiny rand}} := \frac{1}{200} \sum_{j=1}^{200} r_i^{\text{\tiny rand}_j}$.

To minimise the objective function given by \ref{linear_obj}, we apply the following variants of algorithm \ref{alg:adaptive ensemble selection}: \textbf{greedy}, \textbf{dom}, \textbf{stand}, \textbf{rand} and - when J is relatively small - \textbf{opt} (see Subsection \ref{subsubsection:set-up}). For each configuration of parameters J and $\beta$, 250 numerical experiments are run ($n_{\scalebox{0.75}{\mbox{exp}}}=250$). For each numerical experiment, a set of initial conditions are generated, and the resulting long-term value of the objective function is calculated for each of the above variants with these same initial conditions.

\subsubsection{Experiment Results}
\noindent
\def\arraystretch{1.5}
\begin{table}[H]
\caption{Results for increasing J, the number of particles, in the various EKI strategies with the regularised random linear forward operators described in Subsection \ref{subsubsec:linear set-up}.  $\beta = 10^{-4}$ for all experiments.}
\label{table:linear, J}
\begin{tabular}{|p{0.145\textwidth}|p{0.105\textwidth}|p{0.105\textwidth}|p{0.105\textwidth}|p{0.105\textwidth}|p{0.105\textwidth}|p{0.105\textwidth}|}
\hline
\multicolumn{6}{|c|}{\bf{Mean Residual Ratio}}\\
\hline
 &  \bf{\small J=2} & \bf{\small J=4} &  \bf{\small J=6} & \bf{\small J=8} & \bf{\small J=10}\\
\hline
$\boldsymbol{\frac{r^{\mbox{\tiny min}}}{r^{\mbox{\tiny greedy}}}}$ & \colorbox{lightgray}{0.0504} & \colorbox{lightgray}{0.115} & \colorbox{lightgray}{0.192} & \colorbox{lightgray}{0.269} & \colorbox{lightgray}{0.386 }\\
\hline
$\boldsymbol{\frac{r^{\mbox{\tiny min}}}{r^{\mbox{\tiny dom}}}}$ & 0.0315 & 0.0657 & 0.103 & 0.138 & 0.200 \\
\hline
$\boldsymbol{\frac{r^{\mbox{\tiny min}}}{r^{\mbox{\tiny stand}}}}$ & $8.62{\times}10^{-5}$  & $5.05{\times}10^{-4}$ & 0.00151 & 0.00319 & 0.00632\\
\hline
$\boldsymbol{\frac{r^{\mbox{\tiny min}}}{r^{\mbox{\tiny rand}}}}$ & 0.0137 & 0.0189 & 0.0232 & 0.0262 & 0.0295\\
\hline
\multicolumn{6}{|c|}{\bf{Mean Residual Percentile}}\\
\hline
$\scriptstyle \boldsymbol{\% \mbox{ } r^{\mbox{\tiny rand}} \ge r^{\mbox{\tiny greedy}}}$ & \colorbox{lightgray}{99.872} & \colorbox{lightgray}{99.996} & \colorbox{lightgray}{100} & \colorbox{lightgray}{100}  & \colorbox{lightgray}{100} \\
\hline
$\scriptstyle \boldsymbol{\% \mbox{ } r^{\mbox{\tiny rand}} \ge r^{\mbox{\tiny dom}}}$ & 82.614 & 95.462 & 98.154 & 99.344 & 99.816 \\
\hline
$\scriptstyle \boldsymbol{\% \mbox{ } r^{\mbox{\tiny rand}} \ge r^{\mbox{\tiny stand}}}$ & 0 & 0.002 & 0.416 & 0.49 & 2.75\\
\hline
\end{tabular}
\end{table}
\def\arraystretch{1.5}

\def\arraystretch{1.5}
\begin{table}[H]
\caption{Results for increasing $\beta$, the relative weight of the prior, in the various EKI strategies with the regularised random linear forward operators described in Subsection \ref{subsubsec:linear set-up}. J = 5 for all experiments.}
\label{table:linear, beta}
\begin{tabular}{|p{0.145\textwidth}|p{0.105\textwidth}|p{0.105\textwidth}|p{0.105\textwidth}|p{0.105\textwidth}|p{0.105\textwidth}|p{0.105\textwidth}|}
\hline
\multicolumn{7}{|c|}{\bf{Mean Residual Ratio}}\\
\hline
 &  $\boldsymbol{ \beta=10^{-3}}$ & $\boldsymbol{ \beta=10^{-2}}$ &  $\boldsymbol{ \beta=10^{-1}}$ & $\boldsymbol{ \beta=10^{0}}$ & $\boldsymbol{ \beta=10^{1}}$ & $\boldsymbol{ \beta=10^{2}}$\\
\hline
$\boldsymbol{\Large 
\frac{r^{\mbox{\tiny min}}}{r^{\mbox{\tiny opt}}}}$ & \colorbox{lightgray}{0.161} & \colorbox{lightgray}{0.572} &  \colorbox{lightgray}{0.889} & \colorbox{lightgray}{0.941} & \colorbox{lightgray}{0.977} & \colorbox{lightgray}{0.997}\\
\hline
$\boldsymbol{\Large 
\frac{r^{\mbox{\tiny min}}}{r^{\mbox{\tiny greedy}}}}$ & 0.15  & 0.562 & 0.888 & 0.941 & 0.977 & 0.997\\
\hline
$\boldsymbol{\frac{r^{\mbox{\tiny min}}}{r^{\mbox{\tiny dom}}}}$ & 0.0855 & 0.396 & 0.815 & 0.896 & 0.964 & 0.995\\
\hline
$\boldsymbol{\frac{r^{\mbox{\tiny min}}}{r^{\mbox{\tiny stand}}}}$ & 0.000909 & 0.0447 & 0.439 & 0.708 & 0.856 & 0.914\\
\hline
$\boldsymbol{\frac{r^{\mbox{\tiny min}}}{r^{\mbox{\tiny rand}}}}$ & 0.0209 & 0.0973 & 0.245 & 0.488 & 0.827 & 0.976\\
\hline
\multicolumn{7}{|c|}{\bf{Mean Residual Percentile}}\\
\hline
$\scriptstyle \boldsymbol{\% \mbox{ } r^{\mbox{\tiny rand}} \ge r^{\mbox{\tiny greedy}}}$ & \colorbox{lightgray}{99.992} & \colorbox{lightgray}{100} & \colorbox{lightgray}{100} & \colorbox{lightgray}{100} & \colorbox{lightgray}{100} & \colorbox{lightgray}{100}\\
\hline
$\scriptstyle \boldsymbol{\% \mbox{ } r^{\mbox{\tiny rand}} \ge r^{\mbox{\tiny dom}}}$ & 97.604 & 98.338 & 99.634 & 99.644 & 99.78 & 99.674\\
\hline
$\scriptstyle \boldsymbol{\% \mbox{ } r^{\mbox{\tiny rand}} \ge r^{\mbox{\tiny stand}}}$ & 0.006 & 18.452 & 71.68 & 74.53 & 55.672 & 21.644\\
\hline
\end{tabular}
\end{table}
\def\arraystretch{1.5}

\subsubsection{Discussion of Experiment Results}\label{subsubsec: discussion, linear}

Table \ref{table:linear, J} shows a monotonic increase in the ratios with the number of particles for all variants of the algorithm. That this should be the case is intuitively clear and can be mathematically confirmed using Corollary \ref{cor:optimality_condition}.

Table \ref{table:linear, beta} shows that $r^{\mbox{\tiny greedy}} \le r^{\mbox{\tiny dom}} \le r^{ \mbox{\tiny stand}}$ across all values of $\beta$.  For all variants of EKI, the average ratios of the long-term values of the objective function increases monotonically as the value of $\beta$ increases. $\frac{r}{r^{\mbox{\tiny greedy}}}$, $\frac{r}{r^{\mbox{\tiny dom}}}$ and $\frac{r}{\bar{r}}$ converge to 1, while $\frac{r}{r^{\mbox{\tiny stand}}}$ converges to a value strictly less than 1. This is expected since, for all EKI with an `optimal B', $M_{\mathcal{J}} \approx \frac{1}{\beta}\Lambda_{\mathcal{J}}$ and Corollary \ref{cor:optimality_condition} therefore shows that $r$, $r^{\mbox{\tiny greedy}}, r^{\mbox{\tiny dom}}, \bar{r} \rightarrow \|y\|^{2}$ as $\beta \rightarrow \infty$. For the standard method, $B = \Lambda_{\mathcal{J}}^{\frac{1}{2}}$, and so Corollary \ref{cor:optimality_condition} shows that $r^{\mbox{\tiny stand}} \rightarrow \|y\|^{2} + \frac{1}{J}$ as $\beta \rightarrow \infty$.

\subsection{Nonlinear Forward operator}
In the following, we apply the proposed EKI methods to two inverse problems with nonlinear forward operators.
\subsubsection{Nonlinear Algebraic Forward Operator}

We run $25$ experiments ($n_{\scalebox{0.75}{\mbox{exp}}} = 25$). For each experiment we randomly generate a minimisation problem with the following objective function
\begin{equation}\label{nonlinear_obj}
\Phi_{i}(u) = \frac{1}{2}\norma{G_{i}(u) - y_{i}}^{2} + \frac{\beta}{2}\norma{u}_{C_{i}}^{2}, \quad i = 1, \ldots , 25.
\end{equation}
To generate $G_{i}$, for each experiment, we randomly sample $W_{i} \in \mathbb{R}^{30 \times 50}$, the elements of which are i.i.d. generated according to a uniform distribution on $[0, 1]$. We then define
\begin{equation}
\label{eq:algebraic forward operator}
(G_{i}(u))_{j} = (1 + (\exp(W_{i}u)_{j}))^{-1}, \quad j = 1, \ldots, 30.
\end{equation}
\indent To generate the prior covariance $C_{i}$, we first generate $P_{i} \in O(50)$ by sampling uniformly from the space of real orthogonal $50 \times 50$ matrices $O(50)$, and then we define $C_{i} := P_{i}\Sigma P_{i}^{T}$, where $\Sigma$ is a diagonal matrix whose diagonal entries decay according to
\begin{equation*}
\sigma_{k} = (1 + k)^{-2}, \quad k = 1, \ldots , 50. 
\end{equation*}
\indent To generate $y_{i} \in \mathbb{R}^{30}$ we first randomly generate $u_{i} \in \mathbb{R}^{50}$ by sampling i.i.d. uniformly from $\mathcal{N}(0, C_{i})$ and then letting $y_{i} := G_{i}(u_{i}) + 10^{-4}\eta_{i}$, where $\eta_{i} \sim \mathcal{N}(0, I_{30 \times 30})$ i.i.d.

To minimise the objective function given by \ref{nonlinear_obj}, we apply the following variants of algorithm \ref{alg:adaptive ensemble selection}: \textbf{greedy}, \textbf{dom}, \textbf{stand}, \textbf{greedy\_r} and \textbf{dom\_r} (see Subsection \ref{subsubsection:set-up}). For each configuration of parameters J and $\beta$, 25 numerical experiments are run ($n_{\scalebox{0.75}{\mbox{exp}}}=25$). For each numerical experiment, initial conditions are generated and each of the above variants is run with these same initial conditions. All variants of EKI are run for a total time of $T=100$ and the resampling occurs at times $t = \frac{100}{3}, \frac{200}{3}$ for those variants that resample.

\subsubsection{Experiment Results}
\def\arraystretch{1.5}
\begin{table}[H]
\caption{Results for increasing J, the number of particles, in the various EKI strategies with the regularised algebraic forward operator given by Equation \eqref{eq:algebraic forward operator}. $\beta = 10^{-4}$ for all experiments.} 
\label{table:algebraic, J}
\begin{tabular}{|p{0.1\textwidth}|p{0.11\textwidth}|p{0.11\textwidth}|p{0.11\textwidth}|p{0.11\textwidth}|p{0.11\textwidth}|p{0.11\textwidth}|p{0.11\textwidth}|}
\hline
\multicolumn{6}{|c|}{\bf{Mean Residual Ratio}}\\
\hline
 &  \bf{\small J=2} & \bf{\small J=4} &  \bf{\small J=6} & \bf{\small J=8} & \bf{\small J=10} \\
\hline
$\boldsymbol{\Large \frac{r^{\mbox{\tiny min}}}{r^{\mbox{\tiny greedy\_r}}}}$ & \colorbox{lightgray}{0.0222} & \colorbox{lightgray}{0.0443} & \colorbox{lightgray}{0.115} & \colorbox{lightgray}{0.22} & \colorbox{lightgray}{0.309} \\
\hline
$\boldsymbol{\Large \frac{r^{\mbox{\tiny min}}}{r^{\mbox{\tiny greedy}}}}$ & 0.00498 & 0.00898 & 0.014 & 0.0228 & 0.0303   \\
\hline
$\boldsymbol{\Large \frac{r^{\mbox{\tiny min}}}{r^{\mbox{\tiny dom\_r}}}}$ & 0.00375 & 0.00885 & 0.0151 & 0.0288 & 0.0337 \\
\hline
$\boldsymbol{\Large \frac{r^{\mbox{\tiny min}}}{r^{\mbox{\tiny dom}}}}$ & 0.00306 & 0.00581 & 0.00872 & 0.0116 &   0.0162 \\
\hline
$\boldsymbol{\Large \frac{r^{\mbox{\tiny min}}}{r^{\mbox{\tiny stand}}}}$ & $1.22 {\times} 10^{-4}$ & $1.09 {\times} 10^{-4}$ & $1.33 {\times} 10^{-4}$ & $1.98 {\times} 10^{-4}$ & $1.87 {\times} 10^{-4}$ \\
\hline
\end{tabular}
\end{table}

\def\arraystretch{1.5}
\begin{table}[H]
\caption{Results for increasing $\beta$, the relative weight of the prior, in the various EKI strategies with the regularised algebraic forward operator given by Equation \eqref{eq:algebraic forward operator}. $J = 5$ for all experiments.}
\label{table:algebraic, beta}
\begin{tabular}{|p{0.1\textwidth}|p{0.11\textwidth}|p{0.11\textwidth}|p{0.11\textwidth}|p{0.11\textwidth}|p{0.11\textwidth}|p{0.11\textwidth}|p{0.11\textwidth}|}
\hline
\multicolumn{7}{|c|}{\bf{Mean Residual Ratio}}\\
\hline
  & $\boldsymbol{\beta = 10^{-5}}$ & $\boldsymbol{\beta = 10^{-4}}$ & $\boldsymbol{\beta = 10^{-3}}$ & $\boldsymbol{\beta = 10^{-2}}$ & $\boldsymbol{\beta = 10^{-1}}$ & $\boldsymbol{\beta = 10^{0}}$  \\
\hline
$\boldsymbol{\Large \frac{r^{\mbox{\tiny min}}}{r^{\mbox{\tiny greedy\_r}}}}$ & \colorbox{lightgray}{0.0110} & \colorbox{lightgray}{0.0799}  & \colorbox{lightgray}{0.379} & \colorbox{lightgray}{0.749} & \colorbox{lightgray}{0.97} & \colorbox{lightgray}{0.958} \\
\hline
$\boldsymbol{\Large \frac{r^{\mbox{\tiny min}}}{r^{\mbox{\tiny greedy}}}}$ & 0.00101 & 0.0116 & 0.0973  & 0.335 & 0.765 & 0.885 \\
\hline
$\boldsymbol{\Large \frac{r^{\mbox{\tiny min}}}{r^{\mbox{\tiny dom\_r}}}}$ & 0.00133 & 0.0131 & 0.0915 & 0.393 & 0.816 & 0.918\\
\hline
$\boldsymbol{\Large \frac{r^{\mbox{\tiny min}}}{r^{\mbox{\tiny dom}}}}$ & $6.85 {\times} 10^{-4}$  & 0.00805 & 0.064 & 0.283 & 0.71 & 0.875\\
\hline
$\boldsymbol{\Large \frac{r^{\mbox{\tiny min}}}{r^{\mbox{\tiny stand}}}}$ & $1.22 {\times} 10^{-5}$ & $1.47 {\times} 10^{-4}$ & 0.00139 & 0.0214 & 0.271 & 0.550\\
\hline
\end{tabular}
\end{table}
\subsubsection{Discussion of Experiment Results}
Table \ref{table:algebraic, J} shows that, for all variants of the algorithm, there is the expected monotonic decrease in the long-term value of the objective function as the number of particles increases. (This is with the exception of $r^{\scalebox{0.5}{\mbox{stand}}}$ between the values of $J = 2$ and $J = 4$, and $J = 8$ and $J = 10$. These algorithms are derived heuristically and are randomly initialised, so deviations from the expected monotonic decrease should not be too surprising.)

Table \ref{table:algebraic, beta} shows that, for all variants of the algorithm, there is the expected monotonic decrease in the long-term  value of the objective function as $\beta$ increases. This is as expected, since a larger $\beta$ can be interpreted as making the forward operator `more linear' (by putting more weight on the a linear components of the forward operator) and a monotonic decrease in the long-term value of the objective function is mathematically expected in the linear setting (see Section \ref{subsubsec: discussion, linear}).

Likewise, across all values of J and $\beta$, the relative performances of the variants of EKI are as expected: $r^{\scalebox{0.5} {\mbox{greedy\_r}}} \le r^{\scalebox{0.5}{\mbox{greedy}}}$,  $r^{\scalebox{0.5}{\mbox{dom\_r}}} \le r^{\scalebox{0.5}{\mbox{dom}}}$,  $r^{\scalebox{0.5}{\mbox{greedy\_r}}} \le r^{\scalebox{0.5}{\mbox{dom\_r}}}$, $r^{\scalebox{0.5}{\mbox{greedy}}} \le r^{\scalebox{0.5}{\mbox{dom}}}$ and $r^{\scalebox{0.5}{\mbox{dom}}} \le r^{\scalebox{0.5}{\mbox{stand}}}$ suggest that resampling, selecting eigenvectors according to Algorithm \ref{alg:selectJ} and selecting B `optimally' - i.e. according to Corollary \ref{cor:double optimal B condition} - all improve performance.

\subsubsection{2D Darcy Flow}
We consider the following elliptic PDE:
\begin{equation}\label{eq:darcy}
\begin{cases}
- \nabla \cdot (\exp(u) \nabla p) = f, & \mbox{in } D\\
p = 0, & \mbox{on } \partial D,\
\end{cases}
\end{equation}
with $D = (0, 1)^{2}$. We seek to recover the unknown diffusion coefficient $u^{\dagger} \in C^{1}(D) = X$, given an observation of the solution $p \in H^{2}(D) \cap H_0^{1}(D) =: V$ (see \cite[Chapter 5]{evans10} for details on the Sobolev spaces $H^{2}(D)$ and $H_0^{1}(D)$). Furthermore, we assume that the scalar field $f \in \mathbb{R}$ is known.
The observations are given by
\begin{equation*}
    y = \mathcal{O}(p) + \eta,
\end{equation*}
where $\mathcal{O}(p) : V \rightarrow \mathbb{R}^{K}$ is the observation operator, which considers K randomly chosen points in X, i.e. $\mathcal{O}(p) = (p(x_{1}), \ldots, p(x_{K}))$. Finally, $\eta$ denotes the noise on our data and is assumed  $\eta \sim \mathcal{N}(0, \Gamma)$ for some symmetric, non-negative $\Gamma \in \R^{k \times k}$. Then the inverse problem is given by
\begin{equation*}
    y =  G(u) + \eta,
\end{equation*}
where $G = \mathcal{O} \circ \mathcal{G}$ and $\mathcal{G} : X \rightarrow \mathbb{R}^{K}$ denotes the solution operator of \eqref{eq:darcy}. We solve the PDE on a uniform mesh with $N_{\scalebox{0.5}{\mbox{FEM}}}^2$ points $[\frac{0}{N_{\scalebox{0.5}{\mbox{FEM}}}}, \frac{1}{N_{\scalebox{0.5}{\mbox{FEM}}}}, \dots , \frac{N_{\scalebox{0.5}{\mbox{FEM}}}-1}{N_{\scalebox{0.5}{\mbox{FEM}}}}]^2$
using a finite element method (FEM) with continuous, piecewise linear finite element basis functions. We model the prior distribution as a truncated Karhunen-Loève expansion with $N_{\scalebox{0.5}{\mbox{KL}}}^{2}$ terms:
\begin{equation*}
u(x, \omega) = \mu + \sum_{k, l = 1}^{N_{\scalebox{0.5}{\mbox{KL}}}} \lambda_{k,l}^{\frac{1}{2}}e_{k,l}(x) \xi_{k,l}(\omega),
\end{equation*}
where $\mu \in \R^{N_{\scalebox{0.5}{\mbox{FEM}}}^{2}}$, $\lambda_{k,l} = (\pi^{2}(k^{2} + l^{2}) + \tau^{2})^{-\alpha}$, $e_{k, l}(x)=\frac{2}{N_{\scalebox{0.5}{\mbox{FEM}}}}\cos(2 \pi x_{1} k)\cos(2 \pi x_{2} l) \in \R^{N_{\scalebox{0.5}{\mbox{FEM}}}^{2}} $, $\xi_{k, l} \sim \mathcal{N}(0, 1)$ are i.i.d. and $(k,l)\in [1,\ldots, N_{\scalebox{0.5}{\mbox{FEM}}}]^{2}$.  Due to the FEM discretisation, $u(\vec{x}, \omega), e_{k, l }(\vec{x}), \vec{x} \in \mathbb{R}^{N_{\scalebox{0.5}{\mbox{FEM}}} \times N_{\scalebox{0.5}{\mbox{FEM}}}}$, where $\vec{x} \in \mathbb{R}^{N_{\scalebox{0.5}{\mbox{FEM}}} \times N_{\scalebox{0.5}{\mbox{FEM}}}}$ is a vector of the FEM meshpoints.

\begin{align*}
u(\vec{x}, \omega) 
  &= \mu + [e_{1, 1}(\vec{x}), \ldots , e_{N_{\scalebox{0.5}{\mbox{KL}}}, N_{\scalebox{0.5}{\mbox{KL}}}}(\vec{x})]\mbox{diag}(\lambda_{1,1}^{\frac{1}{2}}, \ldots, \lambda_{N_{\scalebox{0.5}{\mbox{KL}}}, N_{\scalebox{0.5}{\mbox{KL}}}}^{\frac{1}{2}})\xi(\omega)\\
  &=: \mu + E(\vec{x}) \Lambda^{\frac{1}{2}} \xi(\omega),
\end{align*}
where $E(\vec{x}) \in \mathbb{R}^{N_{\scalebox{0.5}{\mbox{FEM}}}^{2} \times N_{\scalebox{0.5}{\mbox{KL}}}^{2}}$, $\Lambda \in \R^{N_{\scalebox{0.5}{\mbox{KL}}}^{2} \times N_{\scalebox{0.5}{\mbox{KL}}}^{2}}$ and $\xi \sim \mathcal{N}(0, I_{N_{\scalebox{0.5}{\mbox{KL}}}}^{2})$. (The columns of $E(\vec{x})$ and $\Lambda$ are in lexicographic order with respect to the indices of $e_{k,l}(\vec{x})$ and $\lambda_{k, l}$ respectively.) We note that $E(\vec{x})^{T}E(\vec{x}) = I_{N_{\scalebox{0.5}{\mbox{KL}}}}^{2}$ and $R_{u} := Cov(u) = E\Lambda E^{T}$. We then define the un-whitened variable $w := E(\vec{x})^{T}(u - \mu)  = \Lambda^{\frac{1}{2}} \xi \in \mathbb{R}^{N_{\scalebox{0.5}{\mbox{KL}}}^{2}}$ so that $w \sim \mathcal{N}(0, \Lambda)$. Further we define the forward operator $\tilde{G} : \mathbb{R}^{N_{\scalebox{0.5}{\mbox{KL}}}^{2}} \rightarrow \mathbb{R}^{K}$ such that $\tilde{G}(w) = G (\mu+Ew)$. Then
\begin{equation}\label{darcy_objective}
\frac{1}{2}\norma{ G(u) - y}^{2} + \frac{\beta}{2} \norma{u}^{2}_{R_{u}}
= \frac{1}{2}\norma{\tilde{G}(w) - y}^{2} + \frac{\beta}{2} \norma{w}_{\Lambda}^{2}.\\
\end{equation}

To minimise this objective function, we apply EKI to the variable w rather than to u, i.e. we run EKI methods with forward operator $\begin{pmatrix}\tilde{G}(w)^{T} & \sqrt{\beta}(\Lambda^{-\frac{1}{2}}w)^{T} \end{pmatrix}^{T}$ and data $\begin{pmatrix} y^{T} & 0_{N_{\scalebox{0.5}{\mbox{KL}}}}^{T}\end{pmatrix}^{T}$.

To minimise the objective function given by \ref{darcy_objective}, we apply the following variants of algorithm \ref{alg:adaptive ensemble selection}: \textbf{greedy}, \textbf{dom}, \textbf{stand}, \textbf{greedy\_r} and \textbf{dom\_r} (see Subsection \ref{subsubsection:set-up}). For each configuration of parameters J and $\beta$, 10 numerical experiments are run ($n_{\scalebox{0.75}{\mbox{exp}}}=10$). For each numerical experiment, initial conditions are generated and each of the above variants is run with these same initial conditions. All variants of EKI are run for a total time of $T=1$ and the resampling occurs at times $t = \frac{1}{3}, \frac{2}{3}$ for those variants that resample. We take $N_{\scalebox{0.5}{\mbox{FEM}}} = 64$, $N_{\scalebox{0.5}{\mbox{KL}}} = 7$, $K = 30$, $\Gamma = 10I_{30}$, $\mu = -9$, $\alpha = 1$, $\tau = 0.01$ and $f = 1$ in Equation \eqref{eq:darcy}.

\subsubsection{Experiment Results}
\begin{table}[H]
\caption{Results for increasing the J, the number of particles, in the various EKI strategies, where the forward operator is the regularised observed solution operator to the Darcy flow problem given by PDE \eqref{eq:darcy}. In these experiments $\beta = 10^{-5}$.}
\label{table:darcy, J}
\begin{tabular}{|p{0.1\textwidth}|p{0.11\textwidth}|p{0.11\textwidth}|p{0.11\textwidth}|p{0.11\textwidth}|p{0.11\textwidth}|p{0.11\textwidth}|p{0.11\textwidth}|}
\hline
\multicolumn{6}{|c|}{\bf{Mean Residual Ratio}}\\
\hline
 &  \bf{\small J=2} & \bf{\small J=4} &  \bf{\small J=6} & \bf{\small J=8} & \bf{\small J=10} \\
\hline
$\boldsymbol{\Large \frac{r^{\mbox{\tiny min}}}{r^{\mbox{\tiny greedy\_r}}}}$ & \colorbox{lightgray}{0.339}  & \colorbox{lightgray}{0.484} & \colorbox{lightgray}{0.508} & \colorbox{lightgray}{0.621} & \colorbox{lightgray}{0.676}  \\
\hline
$\boldsymbol{\Large \frac{r^{\mbox{\tiny min}}}{r^{\mbox{\tiny greedy}}}}$ & 0.255 & 0.356 & 0.364 & 0.399 &  0.472 \\
\hline
$\boldsymbol{\Large \frac{r^{\mbox{\tiny min}}}{r^{\mbox{\tiny dom\_r}}}}$ & 0.203 & 0.287 & 0.275 & 0.324 & 0.376 \\
\hline
$\boldsymbol{\Large \frac{r^{\mbox{\tiny min}}}{r^{\mbox{\tiny dom}}}}$ & 0.203 & 0.287 & 0.275 & 0.323 & 0.322\\
\hline
$\boldsymbol{\Large \frac{r^{\mbox{\tiny min}}}{r^{\mbox{\tiny stand}}}}$ & 0.0067 & 0.0311 & 0.0751 & 0.183 &  0.272 \\
\hline
\end{tabular}
\end{table}

\noindent
\def\arraystretch{1.5}
\begin{table}[H]
\caption{Results for increasing $\beta$, the relative weight of the prior, in the various EKI strategies , where the forward operator is the regularised observed solution operator to the Darcy flow problem given by  PDE \eqref{eq:darcy}. In these experiments J = 5.}
\label{table:darcy, beta}
\begin{tabular}{|p{0.1\textwidth}|p{0.11\textwidth}|p{0.11\textwidth}|p{0.11\textwidth}|p{0.11\textwidth}|p{0.11\textwidth}|p{0.11\textwidth}|p{0.11\textwidth}|}
\hline
\multicolumn{7}{|c|}{\bf{Mean Residual Ratio}}\\
\hline
 &  $\boldsymbol{ \beta=10^{-6}}$ & $\boldsymbol{ \beta=10^{-5}}$ &  $\boldsymbol{ \beta=10^{-4}}$ & $\boldsymbol{ \beta=10^{-3}}$ & $\boldsymbol{ \beta=10^{-2}}$ & $\boldsymbol{ \beta=10^{-1}}$ \\
\hline
$\boldsymbol{\Large \frac{r^{\mbox{\tiny min}}}{r^{\mbox{\tiny greedy\_r}}}}$ & \colorbox{lightgray}{0.430} & \colorbox{lightgray}{0.479} & \colorbox{lightgray}{0.548} & \colorbox{lightgray}{0.772} & \colorbox{lightgray}{0.947} & \colorbox{lightgray}{0.987} \\
\hline
$\boldsymbol{\Large \frac{r^{\mbox{\tiny min}}}{r^{\mbox{\tiny greedy}}}}$ & 0.314 & 0.311 & 0.404 & 0.549 & 0.778 & 0.926\\
\hline
$\boldsymbol{\Large \frac{r^{\mbox{\tiny min}}}{r^{\mbox{\tiny dom\_r}}}}$ & 0.243 & 0.233 & 0.313 & 0.402 & 0.618 & 0.851\\
\hline
$\boldsymbol{\Large \frac{r^{\mbox{\tiny min}}}{r^{\mbox{\tiny dom}}}}$ & 0.243 & 0.233 & 0.313 & 0.402 & 0.617 & 0.851 \\
\hline
$\boldsymbol{\Large \frac{r^{\mbox{\tiny min}}}{r^{\mbox{\tiny stand}}}}$  & 0.0121 & 0.0495 & 0.228 & 0.341 & 0.591 & 0.810 \\
\hline
\end{tabular}
\end{table}

\subsubsection{Discussion of Experiment Results}
Table \ref{table:darcy, J} shows that, for all variants of the algorithm, there is the expected monotonic decrease in the long-term  value of the objective function as number of particles increases. (This is with the exception of $r^{\scalebox{0.5}{\mbox{dom}}}$ between the values of $J = 16$ and $J = 32$. These algorithms are derived heuristically and are randomly initialised, so deviations from the expected monotonic decrease should not be too surprising.)

Table \ref{table:darcy, beta} shows that, for all variants of the algorithm, there is the expected monotonic decrease in the long-term  value of the objective function as $\beta$ increases. This is as expected, since a larger $\beta$ can be interpreted as making the forward operator `more linear' (by putting more weight on a linear components of the forward operator) and a monotonic decrease in the long-term value of the objective function is mathematically expected in the linear setting (see Section \ref{subsubsec: discussion, linear}).

Likewise, across all values of J and $\beta$, the relative performances of the variants of EKI are as expected: $r^{\scalebox{0.5} {\mbox{greedy\_r}}} \le r^{\scalebox{0.5}{\mbox{greedy}}} \le r^{\scalebox{0.5}{\mbox{dom\_r}}} \le r^{\scalebox{0.5}{\mbox{dom}}} \le r^{\scalebox{0.5}{\mbox{stand}}}$ suggests that resampling, selecting eigenvectors according to Algorithm ref{alg:selectJ} and selecting B `optimally' - i.e. according to Corollary \ref{cor:double optimal B condition} - all improve performance.

\section{Conclusion and Outlook}\label{sec:concl}
This work represents a comprehensive study on optimising the subspace within the EKI framework to address challenges associated with poor prior knowledge and small ensemble sizes. By explicitly solving the EKI particle dynamics in the linear setting, we develop a novel greedy strategy for selecting a solution subspace and strategies for selecting the initial ensemble within this subspace. We extend these results to develop an algorithm for performing EKI with adaptive resampling that is suitable for both linear and nonlinear problems. 

Numerical experiments validate the theoretical findings, showcasing the effectiveness of the proposed methods in diverse scenarios, including high-dimensional problems and ill-posed settings. These experiments show that, in terms of accuracy, the proposed method significantly outperforms standard EKI methods and performs well even compared to more computationally expensive optimisation methods. Despite small ensemble sizes and a limited number of derivative calculations, the proposed methods effectively leverage the structure of inverse problems to yield impressive results. 

Future work could further develop the adaptive resampling in the proposed algorithm to improve its performance in highly nonlinear settings. More specifically, one could systematically investigate how to effectively choose certain parameters in the algorithm, such as ensemble sizes and resample times. Furthermore, it would be interesting to investigate alternative linearisations of the forward operator in order to make the proposed algorithm entirely derivative-free. Another direction of future research would be to combine our proposed method with machine learning techniques that use suitable embeddings to transform nonlinear problems into linear ones.

\section*{Acknowledgements}
The work of RH has been funded by Deutsche Forschungsgemeinschaft (DFG) through the grant CRC 1114 ‘Scaling Cascades in Complex Systems’ (project number 235221301, project A02 - Multiscale data and asymptotic model assimilation for atmospheric flows).  CS acknowledges support from MATH+ Project EF1-19: Machine Learning Enhanced Filtering Methods for Inverse Problems and EF1-20: Uncertainty Quantification and Design of Experiment for Data-Driven Control, funded by the Deutsche Forschungsgemeinschaft (DFG, German Research Foundation) under Germany’s Excellence Strategy—The Berlin Mathematics Research Center MATH+ (EXC-2046/1, Project ID: 390685689).

\appendix
\section{Technical results}\label{sec:appendix}

Several key quantities in the analysis of EKI for linear problems evolve according to the dynamics described in the following theorem. As noted by \cite{bungert2021complete}, the mean-field dynamics of EKI evolve according to equations below with $\alpha = 1$, while the leading term in the average sample equation for stochastic linear EKI with J particles evolves according to the equations below with $\alpha = \frac{J + 1}{J}$. In this paper, however, we focus solely on the case $\alpha = 2$, which characterises the particle dynamics of deterministic linear EKI.

\begin{theorem}
\label{thm:general_dynamics_thm}
Let $\alpha > 0$, $A\in \mathbb R^{k\times n}$, $x_0 \in \mathbb R^{n}$, $C_0 \in \mathbb R^{n\times n}$ be symmetric and  positive semi-definite. Further let $C_{0} = C_{L}C_{R} \in \mathbb{R}^{n \times n}$ be any desired decomposition, where $C_{L} \in \mathbb{R}^{n \times p}$, $C_{R} \in \mathbb{R}^{p \times n}$ for some $p \in \N^{+}$. Let $AC_{0}A^{T}$ be of rank $r$, let $AC_{0}A^{T} = \mathcal{U} \Sigma \mathcal{U}^{T}$ be any desired eigen-decomposition with $\mathcal{U} \in \mathbb{R}^{n \times r}$ and $\Sigma \in \mathbb{R}^{r \times r}$.
Then the initial value problem (IVP)
\begin{align}\label{eq:C-x IVP}
\frac{d}{dt}C_{t} &= -\alpha C_{t}A^{T}AC_{t}, \quad t>0,\\
\frac{d}{dt}x_{t} &= -C_{t}A^{T}(Ax_{t} - y), \quad t>0,\nonumber\\
C(0)&=C_0, \quad u(0)=u_0, \nonumber
\end{align}
has unique solution
\begin{align*}
C_{t} &= C_{L}(I_{p} + \alpha C_{R}A^{T}AC_{L}t)^{-1}C_{R},\\
x_{t} &= x_{0} + (AC_{0})^{T} \mathcal{U}\Sigma^{-1}((I_{r} + \alpha \Sigma t)^{-\frac{1}{\alpha}}- I_{r})\mathcal{U}^{T}(Ax_{0} - y).
\end{align*}
\end{theorem}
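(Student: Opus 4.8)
The plan is to argue by ``guess and verify'', with uniqueness supplied by Picard--Lindel\"of. First I would note that the right-hand side of the IVP \eqref{eq:C-x IVP} is polynomial in $(C_t,x_t)$, hence locally Lipschitz, so any two solutions agreeing at $t=0$ coincide on their common interval of existence; it therefore suffices to check that the stated formulas solve the system on all of $[0,\infty)$. Global well-posedness of the candidate $C_t$ is not an issue: $I_p+\alpha t\,C_RA^TAC_L$ is invertible for every $t\ge 0$ because its nonzero eigenvalues coincide with those of $AC_LC_RA^T = AC_0A^T\succeq 0$, so all its eigenvalues are $\ge 0$ and those of $I_p+\alpha t\,C_RA^TAC_L$ are $\ge 1$. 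The same observation shows the candidate is independent of the chosen decomposition $C_0=C_LC_R$, since any two such candidates solve the same IVP.

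For the covariance equation I would set $\Phi_t:=I_p+\alpha t\,C_RA^TAC_L$, use $\tfrac{d}{dt}\Phi_t^{-1}=-\alpha\,\Phi_t^{-1}C_RA^TAC_L\,\Phi_t^{-1}$, and read off directly that $\tfrac{d}{dt}\bigl(C_L\Phi_t^{-1}C_R\bigr)=-\alpha\,(C_L\Phi_t^{-1}C_R)A^TA(C_L\Phi_t^{-1}C_R)$, with $C_L\Phi_0^{-1}C_R=C_0$. The next, slightly more delicate step is to extract closed forms for $C_tA^T$ and $AC_tA^T$. Applying the resolvent identity $(I+X)^{-1}=I-(I+X)^{-1}X$ to $\Phi_t^{-1}$ yields $C_tA^T\bigl(I_k+\alpha t\,AC_0A^T\bigr)=C_0A^T$, hence $C_tA^T=C_0A^T\bigl(I_k+\alpha t\,AC_0A^T\bigr)^{-1}$. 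I would then record the small fact that $C_0A^T(I_k-\mathcal{U}\mathcal{U}^T)=0$ --- it follows because $(I_k-\mathcal{U}\mathcal{U}^T)AC_0A^T(I_k-\mathcal{U}\mathcal{U}^T)=0$ together with $AC_0A^T\succeq 0$ forces $C_0^{1/2}A^T(I_k-\mathcal{U}\mathcal{U}^T)=0$ --- and combine it with the spectral decomposition of $\bigl(I_k+\alpha t\,AC_0A^T\bigr)^{-1}$ to conclude $C_tA^T=(AC_0)^T\mathcal{U}(I_r+\alpha t\Sigma)^{-1}\mathcal{U}^T$ and therefore $AC_tA^T=\mathcal{U}\Sigma(I_r+\alpha t\Sigma)^{-1}\mathcal{U}^T$.

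Finally I would verify the formula for $x_t$ by differentiation. Using $\tfrac{d}{dt}(I_r+\alpha t\Sigma)^{-1/\alpha}=-\Sigma(I_r+\alpha t\Sigma)^{-1-1/\alpha}$, the candidate satisfies $\dot x_t=-(AC_0)^T\mathcal{U}(I_r+\alpha t\Sigma)^{-1-1/\alpha}\mathcal{U}^T(Ax_0-y)$, the factor $\Sigma^{-1}$ cancelling against $\Sigma$. On the other hand, applying $A$ to the candidate and using $AC_0A^T=\mathcal{U}\Sigma\mathcal{U}^T$ and $\mathcal{U}^T\mathcal{U}=I_r$ gives $Ax_t-y=\bigl(I_k-\mathcal{U}\mathcal{U}^T+\mathcal{U}(I_r+\alpha t\Sigma)^{-1/\alpha}\mathcal{U}^T\bigr)(Ax_0-y)$; substituting this into $-C_tA^T(Ax_t-y)$, using the closed form for $C_tA^T$ and $\mathcal{U}^T(I_k-\mathcal{U}\mathcal{U}^T)=0$, reproduces exactly the expression for $\dot x_t$, while at $t=0$ the bracket in the candidate vanishes. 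Uniqueness then closes the argument. I expect the main obstacle to be organisational rather than conceptual: keeping the rank-deficient bookkeeping straight --- that $\Sigma$ is invertible only as the $r\times r$ block, that the projector $\mathcal{U}\mathcal{U}^T$ appears in the right places, and that $C_0A^T(I_k-\mathcal{U}\mathcal{U}^T)=0$ --- since the underlying manipulations are elementary once the resolvent and push-through identities are in hand.
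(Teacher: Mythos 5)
Your proposal is correct, and it reaches the same closed forms the paper uses, but the mechanics differ enough to be worth noting. The paper also verifies the covariance formula directly (establishing invertibility of $I_p+\alpha t\,C_RA^TAC_L$ by a kernel argument: $\bigl(I+\alpha t\,C_RA^TAC_L\bigr)x=0$ forces $AC_Lx=0$ and hence $x=0$), but for $x_t$ it proceeds constructively rather than by verification: it derives the auxiliary linear ODEs $\frac{d}{dt}(AC_tA^T)=-\alpha(AC_tA^T)^2$, $\frac{d}{dt}(AC_t)=-\alpha(AC_tA^T)(AC_t)$ and $\frac{d}{dt}(Ax_t-y)=-(AC_tA^T)(Ax_t-y)$, solves them with matrix exponentials of the commuting family $\mathcal{U}\Sigma_s\mathcal{U}^T$, and then integrates $\dot x_s$ to obtain the stated expression. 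You instead obtain $C_tA^T=C_0A^T\bigl(I_k+\alpha t\,AC_0A^T\bigr)^{-1}=(AC_0)^T\mathcal{U}(I_r+\alpha t\Sigma)^{-1}\mathcal{U}^T$ by a push-through/resolvent identity together with the observation $C_0A^T(I_k-\mathcal{U}\mathcal{U}^T)=0$, and then check the candidate $x_t$ by differentiation; this is a leaner ``guess and verify'' route (which the paper itself acknowledges is possible, including the derivation only for its instructive value). Two things your write-up adds that the paper leaves implicit are the uniqueness claim (via local Lipschitz continuity of the polynomial right-hand side) and the independence of the formula from the chosen factorisation $C_0=C_LC_R$; one thing it loses is the constructive derivation showing where the $-\tfrac1\alpha$ exponent comes from. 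Your invertibility argument via the coincidence of the nonzero spectra of $C_RA^TAC_L$ and $AC_LC_RA^T=AC_0A^T$ is also fine (just make the antecedent of ``its'' explicitly $C_RA^TAC_L$), and is an acceptable substitute for the paper's kernel computation.
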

\begin{proof}
It can be easily verified that the given expression for $C_{t}$ satisfies the IVP \eqref{eq:C-x IVP}. Perhaps the trickiest part in doing so is to prove that this expression is well defined, i.e. that $(I + (\alpha t) C_{L}A^{T}AC_{R})$ is invertible.
\begin{equation*}
(I + (\alpha t) C_{R}A^{T}AC_{L})x = 0, 
\end{equation*}
implies
\begin{equation*}
0 = AC_{L}(I + (\alpha t)C_{R}A^{T}AC_{L})x  = (I + (\alpha t)AC_{0}A^{T})AC_{L}x,\\
\end{equation*}
which leads to
\begin{equation*}
AC_{L}x = 0,
\end{equation*}
and thus
\begin{equation*}
x = -(\alpha t)C_{R}A^{T}AC_{L}x = 0.
\end{equation*}
Likewise, it can be easily verified that the given expression for $x_{t}$ - in combinations with the expression for $C_{t}$ - satisfies the IVP \eqref{eq:C-x IVP}. However, we presently include a derivation of this expression, since this might prove instructive for solving similar ODEs arising from EKI. To begin with, from the first ODE in the IVP \eqref{eq:C-x IVP}, we derive the following ODE for $AC_{t}A^{T}$:

\begin{equation}\label{eq:ACAode}
\frac{d}{dt}AC_{t}A^{T} = -\alpha (AC_{t}A^{T})(AC_{t}A^{T}).
\end{equation}
If $AC_{0}A^{T} = \mathcal{U}\Sigma\mathcal{U}^{T}$ is a diagonal decomposition, then the solution to \eqref{eq:ACAode} is given by
\begin{align}\label{eq:sln ACA ode}
AC_{t}A^{T} &= \mathcal{U} \Sigma_{t}\mathcal{U}^{T}, \nonumber\\
\Sigma_{t} &= \Sigma(I + \alpha \Sigma t)^{-1}.
\end{align}
Next, from the first ODE in the IVP (\ref{eq:C-x IVP}), we derive the ODE
\begin{equation*}
\frac{d}{dt}(AC_{t}) = -\alpha (AC_{t}A^{T})(AC_{t}),\\
\end{equation*}
which we solve using our expression for $AC_{t}A^{T}$ given by Equation (\ref{eq:sln ACA ode})  
\begin{align*}
AC_{t} &= \exp{(-\alpha\int_{0}^{t}\mathcal{U}\Sigma_{s}\mathcal{U}^{T}ds)}AC_{0}\\
&= (I - \mathcal{U}\mathcal{U}^{T} + \mathcal{U}(I + \alpha\Sigma t)^{-\frac{1}{\alpha}}\mathcal{U}^{T})AC_{0}.
\end{align*}
Similarly, from the second ODE in the IVP \eqref{eq:C-x IVP}, we derive the expression  
\begin{equation*}
\frac{d}{dt}(Ax_{t} - y) = - (AC_{t}A^{T})(Ax_{t} - y),
\end{equation*}
which we solve using our expression for $AC_{t}A^{T}$ given by Equation \eqref{eq:sln ACA ode}
\begin{align*}
Ax_{t} - y &= \exp{(-\int_{0}^{t}AC_{s}A^{T}ds)}(Ax_{0} - y)\\
&= (I - \mathcal{U}\mathcal{U}^{T} + \mathcal{U}(I + \alpha \Sigma t)^{-1}\mathcal{U}^{T})(Ax_{0} - y).
\end{align*}
Finally, 
\begin{align*}
x_{t} &= x_{0} + \int_{0}^{t}\frac{d}{ds}x_{s}ds\\
&= x_{0} - \int_{0}^{t}(AC_{s})^{T}(Ax_{s} - y)ds\\
&= x_{0} + (AC_{0})^{T} \mathcal{U}\Sigma^{-1}((I + \alpha \Sigma t)^{-\frac{1}{\alpha}}- I)\mathcal{U}^{T}(Ax_{0} - y).
\end{align*}
\end{proof}

\begin{lemma} \label{lemma:EV_UV}
Recall that $\mathcal{E}_{0} = [u^{(1)}_{0} - \bar{u}(0), \ldots,  u^{(J)}_{0} - \bar{u}(0)] = U_{0}(I_{J} - \frac{1}{J}1_{J}1_{J}^{T})$, where $1_{J} \in \R^{J}$ is a vector of ones. Let $A\mathcal{E}_{0} = \sqrt{J}\mathcal{U}\Sigma \mathcal{V}^{T}$ be a compact singular value decomposition ($\mathcal{U} \in \mathbb{R}^{m \times r}$, $\Sigma \in \mathbb{R}^{r \times r}$, $\mathcal{V} \in \mathbb{R}^{J \times r}$, where $\Sigma \succ 0$). Then 
$\mathcal{E}_{0}\mathcal{V} = U_{0}\mathcal{V}$.
\begin{proof}
There holds
\begin{align*}
\mathcal{V}^{T}1_{J} &= \frac{1}{\sqrt{J}}\Sigma^{-1}\mathcal{U}^{T}(\sqrt{J}\mathcal{U}\Sigma\mathcal{V}^{T})1_{J}\\
 &= \frac{1}{\sqrt{J}}\Sigma^{-1}\mathcal{U}^{T}A\mathcal{E}_{0}1_{J}\\
 &= \frac{1}{\sqrt{J}}\Sigma^{-1}\mathcal{U}^{T}AU_{0}(I_{J} - \frac{1}{J}1_{J}1_{J}^{T})1_{J}\\
&=0.
\end{align*}
Then
\begin{align*}
\mathcal{E}_{0}\mathcal{V} &= U_{0}(I_{J} - \frac{1}{J}1_{J}1_{J}^{T})\mathcal{V}\\
&= U_{0}\mathcal{V}.
\end{align*}
\end{proof}
\end{lemma}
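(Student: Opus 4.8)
The plan is to reduce the claimed identity $\mathcal{E}_0\mathcal{V} = U_0\mathcal{V}$ to the single observation that the columns of $\mathcal{V}$ are orthogonal to the all-ones vector $1_J$. Writing $\Pi := I_J - \tfrac1J 1_J 1_J^T$ for the mean-centring projector, the hypothesis says $\mathcal{E}_0 = U_0\Pi$, so
\[
\mathcal{E}_0\mathcal{V} = U_0\Pi\mathcal{V} = U_0\mathcal{V} - \tfrac1J (U_0 1_J)(1_J^T\mathcal{V}),
\]
and the lemma follows the moment we know $\mathcal{V}^T 1_J = 0$.

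To establish $\mathcal{V}^T 1_J = 0$, I would argue as follows. The centring projector kills the constant vector, $\Pi 1_J = 1_J - 1_J = 0$, hence $A\mathcal{E}_0 1_J = A U_0 \Pi 1_J = 0$; that is, $1_J \in \ker(A\mathcal{E}_0)$. In the compact SVD $A\mathcal{E}_0 = \sqrt J\,\mathcal{U}\Sigma\mathcal{V}^T$ with $\Sigma \succ 0$, the columns of $\mathcal{V}$ form an orthonormal basis of the row space of $A\mathcal{E}_0$, which is exactly $\ker(A\mathcal{E}_0)^{\perp}$; therefore each column of $\mathcal{V}$ is orthogonal to $1_J$, i.e. $\mathcal{V}^T 1_J = 0$. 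Equivalently — and slightly cleaner to write out — one inverts the SVD: since $\mathcal{U}^T\mathcal{U} = I_r$ and $\Sigma$ is invertible, $\mathcal{V}^T = \tfrac1{\sqrt J}\Sigma^{-1}\mathcal{U}^T (A\mathcal{E}_0)$, and right-multiplying by $1_J$ annihilates the expression because $A\mathcal{E}_0 1_J = 0$. Plugging $\mathcal{V}^T 1_J = 0$ into the display above gives $\mathcal{E}_0\mathcal{V} = U_0\mathcal{V}$.

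There is essentially no obstacle: the proof is a two-line computation. The one point that needs a little care is that $\mathcal{V}$ is only a \emph{tall} orthonormal matrix ($\mathcal{V}^T\mathcal{V} = I_r$ but in general $\mathcal{V}\mathcal{V}^T \neq I_J$), so one may not simply cancel $\mathcal{V}$ or push $\Pi$ through it; the argument genuinely relies on the orthogonality of the column space of $\mathcal{V}$ to $1_J$, which is precisely the invariance $A\mathcal{E}_0 1_J = 0$ of the centred ensemble. This identity is exactly what later lets the EKI particle trajectories in Theorem \ref{thm:dynamics_EKI} be expressed interchangeably through $\mathcal{E}_0$ (hence $C_0$) or through $U_0$.
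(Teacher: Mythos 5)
Your argument is correct and follows essentially the same route as the paper: you establish $\mathcal{V}^{T}1_{J}=0$ by inverting the compact SVD (writing $\mathcal{V}^{T} = \tfrac{1}{\sqrt{J}}\Sigma^{-1}\mathcal{U}^{T}A\mathcal{E}_{0}$ and using $A\mathcal{E}_{0}1_{J}=0$ from the centring projector), then cancel the rank-one correction in $\mathcal{E}_{0}\mathcal{V}=U_{0}(I_{J}-\tfrac{1}{J}1_{J}1_{J}^{T})\mathcal{V}$. This is exactly the paper's computation, with your remark about $\mathcal{V}$ being only column-orthonormal a sensible extra caution.
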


\begin{lemma}\label{Im(AE)_perp}
Let $R \in \R^{n \times n}$ have eigen-decomposition $R = \mathcal{V}\Lambda \mathcal{V}^{T}$, where $\mathcal{V}, \Lambda \in \R^{n \times n}$. Let $\mathcal{J} \subset \{1, \ldots, n\}$ be an index set of size $J\le n$. If $\mathcal{V} = [v_{1}, \ldots, v_{n}]$, define $\mathcal{V}_{\mathcal{J}} := [v_{\mathcal{J}_{1}}, \ldots, v_{\mathcal{J}_{J}}]$, $\mathcal{V}_{\mathcal{J}^{c}} := [v_{\mathcal{J}^{c}_{1}}, \ldots, v_{\mathcal{J}^{c}_{J}}]$, $\Lambda_{\mathcal{J}} := \mbox{diag}(\{\lambda_{\mathcal{J}_{1}}, \ldots, \lambda_{\mathcal{J}_{J}} \})$, $R_{\mathcal{J}}^{\frac{1}{2}} := \mathcal{V}_{\mathcal{J}} \Lambda_{\mathcal{J}}^{\frac{1}{2}}\mathcal{V}_{\mathcal{J}}^{T}$ and $(R_{\mathcal{J}}^{\dagger})^{\frac{1}{2}} := \mathcal{V}_{\mathcal{J}} \Lambda_{\mathcal{J}}^{-\frac{1}{2}}\mathcal{V}_{\mathcal{J}}^{T}$. $U_{0} := \mathcal{V}_{\mathcal{J}}B$, where $B \in \R^{J \times J}$ is invertible. $C_{0} := \frac{1}{J}U_{0}(I_{J} - \frac{1}{J}1_{J}1_{J}^{T})U_{0}^{T}$, where $1_{J} \in \R^{J}$ is a vector of ones. $A \in \mathbb{R}^{m \times n}$ and $\tilde{A} := \begin{pmatrix} A \\ (R_{\mathcal{J}}^{\dagger})^{\frac{1}{2}} \end{pmatrix} \in \mathbb{R}^{(m + n) \times n}$.
Then 
\begin{equation*}
{\mbox{ker}}(\tilde{A}C_{0}\tilde{A}^{T}) = \mbox{Im}\begin{pmatrix} I_{m \times m} & 0_{m \times 1} & 0_{m \times (n -J)} \\ -R_{\mathcal{J}}^{\frac{1}{2}}A^{T} &  R_{\mathcal{J}}^{\frac{1}{2}}\mathcal{V}_{\mathcal{J}}B^{-T}1_{J} & \mathcal{V}_{\mathcal{J}^{c}} \end{pmatrix}.
\end{equation*}
Furthermore, the columns of the matrix on the right-hand side are linearly independent.
\begin{proof}
Take
\begin{align*}
p \in \mbox{ker}(\tilde{A}C_{0}\tilde{A}^{T}).
\end{align*}
Then
\begin{align*}
0 = (I_{J} - \frac{1}{J}1_{J}1_{J}^{T})B^{T}V_{\mathcal{J}}^{T}\tilde{A}^{T}p,
\end{align*}
and thus
\begin{align*}
\tilde{A}^{T}p = \alpha \mathcal{V}_{\mathcal{J}}B^{-T}1_{J} + x,
\end{align*}
for some $x  \in \mbox{Im}(\mathcal{V}_{ \mathcal{J}^{c}})$.

Writing $p = \begin{pmatrix} r \\ s\end{pmatrix}$ for $r \in \mathbb{R}^{m}$ and $s \in \mathbb{R}^{n}$, we get
\begin{align*}
A^{T}r + (R_{\mathcal{J}}^{\dagger})^{\frac{1}{2}}s  = \tilde{A}^{T}p =  \alpha  \mathcal{V}_{\mathcal{J}}B^{-T}1_{J}+ x.
\end{align*}

Rearranging,
\begin{equation*}
\mathcal{V}_{\mathcal{J}}\mathcal{V}_{\mathcal{J}}^{T}s = -R_{\mathcal{J}}^{\frac12}A^{T}r + \alpha \mathcal{V}_{\mathcal{J}}\Lambda_{\mathcal{J}}^{\frac{1}{2}}B^{-T}1_{J},
\end{equation*}
and therefore
\begin{equation*}
p =  \begin{pmatrix} I_{m} \\ -R_{\mathcal{J}}^{\frac{1}{2}}A^{T}\end{pmatrix}r +  \alpha \begin{pmatrix} 0_{m \times 1} \\ \mathcal{V}_{\mathcal{J}}\Lambda_{\mathcal{J}}^{\frac{1}{2}}B^{-T}1_{J}\end{pmatrix} + \begin{pmatrix} 0_{m \times 1} \\ \mathcal{V}_{\mathcal{J}^{c}}\mathcal{V}_{\mathcal{J}^{c}}^{T}s\end{pmatrix}.
\end{equation*}
It follows that $\mbox{ker}(\tilde{A}C_{0}\tilde{A}^{T})$ is spanned by the columns of the following matrix:

\begin{equation*}
\begin{pmatrix} I_{m} & 0_{m \times 1} & 0_{m \times (n -J)} \\ -R_{\mathcal{J}}^{\frac{1}{2}}A^{T} &  R_{\mathcal{J}}^{\frac{1}{2}}\mathcal{V}_{\mathcal{J}}B^{-T}1_{J} & \mathcal{V}_{\mathcal{J}^{c}} \end{pmatrix} \in \mathbb{R}^{(m+n)\times(m + n - J + 1)}.
\end{equation*}
It is clear that the columns of this matrix are linearly independent. 
\end{proof}
\end{lemma}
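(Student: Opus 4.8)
The plan is to reduce the claim to an explicit kernel computation for a symmetric positive semi-definite matrix, solve the resulting linear constraint by hand, and finally verify the asserted linear independence.

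\emph{Step 1 (reduction to a factored kernel).} Set $\mathcal{E}_0 := U_0(I_J - \tfrac1J 1_J 1_J^T) = \mathcal{V}_{\mathcal{J}}B(I_J - \tfrac1J 1_J 1_J^T)$, so that $C_0 = \tfrac1J \mathcal{E}_0\mathcal{E}_0^T$ and hence $\tilde{A}C_0\tilde{A}^T = \tfrac1J(\tilde{A}\mathcal{E}_0)(\tilde{A}\mathcal{E}_0)^T$ is positive semi-definite. Consequently $\mathrm{ker}(\tilde{A}C_0\tilde{A}^T) = \mathrm{ker}\big((\tilde{A}\mathcal{E}_0)^T\big) = \mathrm{ker}(\mathcal{E}_0^T\tilde{A}^T)$, i.e.\ $p \in \mathrm{ker}(\tilde{A}C_0\tilde{A}^T)$ precisely when $(I_J - \tfrac1J 1_J 1_J^T)B^T\mathcal{V}_{\mathcal{J}}^T\tilde{A}^Tp = 0$. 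Since the centering matrix has kernel $\mathrm{span}(1_J)$ and $B$ is invertible, this is equivalent to $\mathcal{V}_{\mathcal{J}}^T\tilde{A}^Tp = \alpha B^{-T}1_J$ for some scalar $\alpha$, i.e.\ to $\tilde{A}^Tp = \alpha\mathcal{V}_{\mathcal{J}}B^{-T}1_J + x$ with $x \in \mathrm{Im}(\mathcal{V}_{\mathcal{J}^c})$, using the orthogonal splitting $\mathbb{R}^n = \mathrm{Im}(\mathcal{V}_{\mathcal{J}})\oplus\mathrm{Im}(\mathcal{V}_{\mathcal{J}^c})$ afforded by the orthogonal matrix $\mathcal{V}$.

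\emph{Step 2 (solving for $p$).} Write $p = (r^T, s^T)^T$ with $r \in \mathbb{R}^m$, $s \in \mathbb{R}^n$, so that $\tilde{A}^Tp = A^Tr + (R_{\mathcal{J}}^\dagger)^{1/2}s$. Projecting the identity from Step 1 onto $\mathrm{Im}(\mathcal{V}_{\mathcal{J}})$ (which kills $x$) and using $(R_{\mathcal{J}}^\dagger)^{1/2} = \mathcal{V}_{\mathcal{J}}\Lambda_{\mathcal{J}}^{-1/2}\mathcal{V}_{\mathcal{J}}^T$, $\mathcal{V}_{\mathcal{J}}^T\mathcal{V}_{\mathcal{J}} = I_J$, $R_{\mathcal{J}}^{1/2}\mathcal{V}_{\mathcal{J}} = \mathcal{V}_{\mathcal{J}}\Lambda_{\mathcal{J}}^{1/2}$ and $(R_{\mathcal{J}}^\dagger)^{1/2}R_{\mathcal{J}}^{1/2} = \mathcal{V}_{\mathcal{J}}\mathcal{V}_{\mathcal{J}}^T$, one solves $\mathcal{V}_{\mathcal{J}}^Ts = \alpha\Lambda_{\mathcal{J}}^{1/2}B^{-T}1_J - \Lambda_{\mathcal{J}}^{1/2}\mathcal{V}_{\mathcal{J}}^TA^Tr$. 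Reconstituting $s = \mathcal{V}_{\mathcal{J}}\mathcal{V}_{\mathcal{J}}^Ts + \mathcal{V}_{\mathcal{J}^c}\mathcal{V}_{\mathcal{J}^c}^Ts$ and letting $r$, $\alpha$ and $\mathcal{V}_{\mathcal{J}^c}^Ts$ range freely over $\mathbb{R}^m$, $\mathbb{R}$ and $\mathbb{R}^{n-J}$ gives
\[
p = \begin{pmatrix} I_m \\ -R_{\mathcal{J}}^{1/2}A^T \end{pmatrix}r + \alpha\begin{pmatrix} 0 \\ R_{\mathcal{J}}^{1/2}\mathcal{V}_{\mathcal{J}}B^{-T}1_J \end{pmatrix} + \begin{pmatrix} 0 \\ \mathcal{V}_{\mathcal{J}^c}(\mathcal{V}_{\mathcal{J}^c}^Ts) \end{pmatrix},
\]
which is exactly the image of the claimed matrix. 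All steps are reversible — a $p$ of this form satisfies $\mathcal{V}_{\mathcal{J}}^T\tilde{A}^Tp = \alpha B^{-T}1_J$ by the same identities, hence lies in the kernel — so the two subspaces coincide; alternatively one checks column by column that each column, after applying $\tilde{A}^T$, either lands in $\mathrm{Im}(\mathcal{V}_{\mathcal{J}^c})$ or is annihilated by $\mathcal{E}_0^T$.

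\emph{Step 3 (linear independence).} The first $m$ columns are independent because of the $I_m$ block on top; the remaining $n-J+1$ columns have vanishing top block, so their independence reduces to independence in $\mathbb{R}^n$ of $R_{\mathcal{J}}^{1/2}\mathcal{V}_{\mathcal{J}}B^{-T}1_J = \mathcal{V}_{\mathcal{J}}\Lambda_{\mathcal{J}}^{1/2}B^{-T}1_J$ together with the columns of $\mathcal{V}_{\mathcal{J}^c}$. Since $\Lambda_{\mathcal{J}} \succ 0$ and $B$ is invertible, $\Lambda_{\mathcal{J}}^{1/2}B^{-T}1_J \neq 0$, so this first vector is a nonzero element of $\mathrm{Im}(\mathcal{V}_{\mathcal{J}})$, which is orthogonal to the span of the orthonormal (hence independent) columns of $\mathcal{V}_{\mathcal{J}^c}$; thus all $m + n - J + 1$ columns are linearly independent. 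I expect the only real care needed to be the bookkeeping in Step 2 — keeping the two square-root operators $R_{\mathcal{J}}^{1/2}$ and $(R_{\mathcal{J}}^\dagger)^{1/2}$ and the projectors $\mathcal{V}_{\mathcal{J}}\mathcal{V}_{\mathcal{J}}^T$, $\mathcal{V}_{\mathcal{J}^c}\mathcal{V}_{\mathcal{J}^c}^T$ straight, and confirming that the free parameters $r$, $\alpha$, $\mathcal{V}_{\mathcal{J}^c}^Ts$ sweep out exactly the kernel rather than merely a subspace of it.
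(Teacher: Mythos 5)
Your proposal is correct and takes essentially the same route as the paper's proof: factor $\tilde{A}C_{0}\tilde{A}^{T}$ through $\mathcal{E}_{0}$ so that the kernel condition becomes $(I_{J}-\tfrac{1}{J}1_{J}1_{J}^{T})B^{T}\mathcal{V}_{\mathcal{J}}^{T}\tilde{A}^{T}p=0$, use the centering-matrix kernel and the orthogonal splitting along $\mathcal{V}_{\mathcal{J}}$, $\mathcal{V}_{\mathcal{J}^{c}}$ to solve for $s$, and read off the claimed column space. If anything, you are slightly more careful than the paper, since you explicitly verify the reverse inclusion (that every vector of the stated form lies in the kernel) and spell out the linear-independence argument, both of which the paper leaves implicit.
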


\begin{lemma}\label{lemma:minimal trace diagonal}
Let $x, y \in \mbox{R}^{n}$, $\Sigma \in \R^{n \times n}$ be diagonal with $\Sigma \succ 0 \mbox{ and } 1 = x^{T} \Sigma^{-1} y$. Define $\alpha \in \mbox{R}^{n}$ by $\alpha_{i} := x_{i}y_{i}$. Then 
\begin{equation*}
    \mbox{trace}(\Sigma^{2}) \ge (\norma{\alpha}_{\frac{2}{3}})^{2}.
\end{equation*}
Furthermore, this lower bound is achieved iff
\begin{equation*}
\forall k: \sigma_{k} = \alpha_{k}^{\frac{1}{3}}(\norma{\alpha}_{\frac{2}{3}})^{\frac{2}{3}}.
\end{equation*}
\begin{proof}
There holds
\begin{equation*}
\{ \Sigma : \Sigma \succ 0, \quad x^{T}\Sigma^{-1} y = 1 \}  = \{ (x^{T}\Sigma^{-1} y)\Sigma : \Sigma \succ 0 , \quad x^{T}\Sigma^{-1} y \ne 0 \}.
\end{equation*}
With
$
\tilde{\Sigma} := (x^{T}\Sigma^{-1} y)\Sigma
$ we get
\begin{align*}
\frac{\partial}{\partial \sigma_{k}}\mbox{trace}(\tilde{\Sigma}^{T}\tilde{\Sigma}) &= \frac{\partial}{\partial \sigma_{k}}\left(\sum_{i}\frac{\alpha_{i}}{\sigma_{i}}\right)^{2}\sum_{j}\sigma_{j}^{2}\\
&= -2\frac{\alpha_{k}}{\sigma_{k}^{2}}\left(\sum_{i}\frac{\alpha_{i}}{\sigma_{i}}\right)\sum_{j}\sigma_{j}^{2} + 2 \sigma_{k} \left(\sum_{i}\frac{\alpha_{i}}{\sigma_{i}}\right)^{2}.
\end{align*}
Setting
\begin{equation*}
\frac{\partial}{\partial \sigma_{k}}\mbox{trace}(\tilde{\Sigma}^{T}\tilde{\Sigma}) =0,
\end{equation*}
and re-arranging, we get 
\begin{equation*}
    \sigma_{k} = \gamma \alpha_{k}^{\frac{1}{3}},
\end{equation*}
for some $\gamma \ne 0$. This yields
\begin{align*}
\tilde{\sigma}_{k} &= \left(\sum_{i}\frac{\alpha_{i}}{\sigma_{i}}\right) \sigma_{k} = \left(\sum_{i}\frac{\alpha_{i}}{\gamma \alpha_{i}^{\frac{1}{3}}}\right) \gamma \alpha_{k}^{\frac{1}{3}} =\alpha_{k}^{\frac{1}{3}}\sum_{i}\alpha_{i}^{\frac{2}{3}}.
\end{align*}
\end{proof}
\end{lemma}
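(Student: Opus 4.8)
The plan is to strip the problem down to its diagonal entries, use the homogeneity of the constraint to eliminate it entirely, solve the resulting unconstrained minimisation by first-order conditions, and then read off the equality case.

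First I would observe that $\mbox{trace}(\Sigma^{2})=\sum_{k}\sigma_{k}^{2}$ and that the constraint $x^{T}\Sigma^{-1}y=1$ is exactly $\sum_{k}\alpha_{k}/\sigma_{k}=1$, so the problem involves only the positive scalars $\sigma_{k}=\Sigma_{kk}$; I would assume throughout that $\alpha_{k}>0$ for every $k$, which is precisely the regime in which the asserted minimiser $\sigma_{k}=\alpha_{k}^{1/3}\norma{\alpha}_{2/3}^{2/3}$ is an admissible (positive) vector. The crucial structural point is that $g(\sigma):=\sum_{k}\alpha_{k}/\sigma_{k}$ is homogeneous of degree $-1$, so $\{\sigma\succ 0:g(\sigma)=1\}=\{\,g(\sigma)\,\sigma:\sigma\succ 0\,\}$; consequently, minimising $\sum_{k}\sigma_{k}^{2}$ subject to the constraint is equivalent to minimising the scale-invariant functional $\Phi(\sigma):=\big(\sum_{i}\alpha_{i}/\sigma_{i}\big)^{2}\big(\sum_{j}\sigma_{j}^{2}\big)$ over all $\sigma\in(0,\infty)^{n}$ with no constraint at all.

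Next I would minimise $\Phi$. Since $\Phi$ is $0$-homogeneous, it suffices to minimise it over the compact set $\{\norma{\sigma}=1\}\cap[0,\infty)^{n}$; because $\Phi\to+\infty$ whenever some $\sigma_{k}\to 0$, the minimum is attained at an interior point, where the gradient vanishes. Writing $g=\sum_{i}\alpha_{i}/\sigma_{i}$ and $F=\sum_{j}\sigma_{j}^{2}$, one has $\partial\Phi/\partial\sigma_{k}=-2g\alpha_{k}F\sigma_{k}^{-2}+2g^{2}\sigma_{k}$, and setting this to zero (dividing by $2g>0$) gives $\sigma_{k}^{3}=(F/g)\alpha_{k}$, i.e.\ $\sigma_{k}=\gamma\,\alpha_{k}^{1/3}$ for a common constant $\gamma>0$; conversely every such $\sigma$ is a critical point, so the critical set is exactly this ray. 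Substituting back, $g(\gamma\alpha^{1/3})=\gamma^{-1}\sum_{k}\alpha_{k}^{2/3}$ and $F(\gamma\alpha^{1/3})=\gamma^{2}\sum_{k}\alpha_{k}^{2/3}$, hence $\Phi=\big(\sum_{k}\alpha_{k}^{2/3}\big)^{3}=\norma{\alpha}_{2/3}^{2}$ regardless of $\gamma$. This yields $\mbox{trace}(\Sigma^{2})\ge\norma{\alpha}_{2/3}^{2}$, and since equality can hold only at a point of the ray, imposing the original constraint $g(\sigma)=1$ forces $\gamma=\sum_{k}\alpha_{k}^{2/3}=\norma{\alpha}_{2/3}^{2/3}$, giving exactly $\sigma_{k}=\alpha_{k}^{1/3}\norma{\alpha}_{2/3}^{2/3}$.

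The delicate step is the \emph{iff}: the first-order computation only identifies stationary points, so the argument rests on (a) a minimiser genuinely existing and being interior, and (b) the critical set being the single ray above — both of which I would need to spell out carefully. A self-contained alternative, which I would include as a check, is to deduce the whole statement from the \emph{reverse} H\"older inequality with exponents $2/3$ and $-2$ applied to the positive sequences $(\alpha_{k})$ and $(\sigma_{k}^{-1})$: this gives $1=\sum_{k}\alpha_{k}\sigma_{k}^{-1}\ge\big(\sum_{k}\alpha_{k}^{2/3}\big)^{3/2}\big(\sum_{k}\sigma_{k}^{2}\big)^{-1/2}$, which rearranges to the bound, while the reverse-H\"older equality condition $\alpha_{k}^{2/3}\propto\sigma_{k}^{2}$ combined with the constraint pins down the minimiser — thereby delivering both halves of the lemma at once.
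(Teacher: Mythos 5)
Your main argument is essentially the paper's own: the paper likewise uses the degree $-1$ homogeneity of the constraint to replace the constrained problem by the scale-invariant functional $\bigl(\sum_i \alpha_i/\sigma_i\bigr)^2\sum_j\sigma_j^2$, sets the partial derivatives to zero, and lands on the ray $\sigma_k=\gamma\alpha_k^{1/3}$ with rescaled value $\alpha_k^{1/3}\sum_i\alpha_i^{2/3}$. Where you go beyond the paper is in the points it leaves implicit: you justify that a minimiser exists and is interior (compactness on the unit sphere plus blow-up of $\Phi$ as any $\sigma_k\to0$), you verify that the critical set is exactly that ray and that the common critical value is $\norma{\alpha}_{2/3}^2$, and you make explicit the standing assumption $\alpha_k>0$, which is genuinely needed (with mixed signs the infimum of the trace can drop below $\norma{\alpha}_{2/3}^2$, so the lemma as stated silently presumes positivity, as does its use downstream). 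Your reverse H\"older alternative, with exponents $2/3$ and $-2$ applied to $(\alpha_k)$ and $(\sigma_k^{-1})$, is a genuinely different and cleaner route: it gives the bound $\mathrm{trace}(\Sigma^2)\ge\norma{\alpha}_{2/3}^2$ in one line and its equality condition $\alpha_k^{2/3}\propto\sigma_k^2$ delivers the ``if and only if'' directly, which the paper's first-order computation alone does not (it only identifies stationary points). So: correct, same core approach but with the existence/uniqueness gaps closed, and the H\"older variant is arguably the preferable self-contained proof of the full statement.
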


\begin{lemma}\label{lemma:minimal trace}
Let $x, y \in \mathbb{R}^{J}$ and $U, V \in \mathbb{R}^{J \times J}$ be orthogonal matrices such that
\begin{align*}
V^{T}x/\norma{x}_{2} &= 1_{J} / \sqrt{J},\\
U^{T}y/\norma{y}_{2} &= 1_{J} /\sqrt{J},
\end{align*}
where $1_{J} \in \mathbb{R}^{J}$ is a vector of ones. Define
\begin{equation*}
B := \norma{x}_{2}\norma{y}_{2}UV^{T}.
\end{equation*}
Then
\begin{align*}
1 &= x^{T}B^{-1}y,\\
\mbox{trace}(B^{T}B) &= J \norma{x}_{2}^{2}\norma{y}_{2}^{2}.
\end{align*}
\begin{proof}
Let $\tilde{B} \in \mathbb{R}^{J \times J}$  be invertible with singular value decomposition $\tilde{U}\tilde{\Sigma}\tilde{V}^{T}$ such that $1 = x^{T} \tilde{B}^{-1} y = (\tilde{V}^{T}x)^{T}\tilde{\Sigma}^{-1}(\tilde{U}^{T}y)$. Suppose we are choosing $\tilde{\Sigma}$ to minimise $\mbox{trace}(\tilde{B}^{T}\tilde{B}) = \mbox{trace}(\tilde{\Sigma}^{T}\tilde{\Sigma})$. According to Lemma \ref{lemma:minimal trace diagonal}, we should set
\begin{equation*}
\tilde{\sigma}_{j} = (\tilde{V}^{T}x)_{i}^{\frac{1}{3}}(\tilde{U}^{T}y)_{i}^{\frac{1}{3}}(\norma{(\tilde{V}^{T}x)(\tilde{U}^{T}y)}_{\frac{2}{3}})^{\frac{3}{2}}.
\end{equation*}
In this case
\begin{align*}
\mbox{trace}(\tilde{B}^{T}\tilde{B}) &= (\norma{(\tilde{V}^{T}x)(\tilde{U}^{T}y)}_{\frac{2}{3}})^{2}\\
&\le (\norma{(\tilde{V}^{T}x)}_{\frac{4}{3}})^{2} (\norma{(\tilde{U}^{T}y)}_{\frac{4}{3}})^{2}.
\end{align*}
Setting $\tilde{U} = U$ and $\tilde{V} = V$ - where U and V are defined above - minimises the right-hand side of this inequality. In this case
\begin{align*}
\tilde{\sigma}_{j} &= \norma{x}_{2}\norma{y}_{2},\\
\tilde{B} &= \norma{x}_{2}\norma{y}_{2}UV^{T}.
\end{align*}
\end{proof}
\end{lemma}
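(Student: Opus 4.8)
The statement to be proved consists of two identities satisfied by the explicitly constructed matrix $B$, so the natural route is direct verification; I will then indicate, following the structure of the proof of Lemma \ref{lemma:minimal trace diagonal}, why this $B$ is the relevant, approximately trace-minimising, choice.

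\textbf{Step 1 (the two identities).} Since $U$ and $V$ are orthogonal, $UV^{T}$ is orthogonal, hence $B = \norma{x}_{2}\norma{y}_{2}\,UV^{T}$ is invertible with $B^{-1} = \norma{x}_{2}^{-1}\norma{y}_{2}^{-1}\,VU^{T}$. Using the hypotheses $V^{T}x = \norma{x}_{2}\,1_{J}/\sqrt{J}$ and $U^{T}y = \norma{y}_{2}\,1_{J}/\sqrt{J}$ together with $1_{J}^{T}1_{J} = J$,
\begin{equation*}
x^{T}B^{-1}y = \frac{(V^{T}x)^{T}(U^{T}y)}{\norma{x}_{2}\norma{y}_{2}} = \frac{\norma{x}_{2}\norma{y}_{2}}{\norma{x}_{2}\norma{y}_{2}}\cdot\frac{1_{J}^{T}1_{J}}{J} = 1 .
\end{equation*}
For the trace, orthogonality of $UV^{T}$ gives $B^{T}B = \norma{x}_{2}^{2}\norma{y}_{2}^{2}\,VU^{T}UV^{T} = \norma{x}_{2}^{2}\norma{y}_{2}^{2}\,I_{J}$, so $\mathrm{trace}(B^{T}B) = J\,\norma{x}_{2}^{2}\norma{y}_{2}^{2}$. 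This already carries the entire formal content of the lemma.

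\textbf{Step 2 (why this $B$).} To see that $B$ is the intended (approximately optimal) solution of $\min\{\mathrm{trace}(\tilde B^{T}\tilde B)\;:\;\tilde B\text{ invertible},\ x^{T}\tilde B^{-1}y = 1\}$, write a competitor in its singular value decomposition $\tilde B = \tilde U\tilde\Sigma\tilde V^{T}$, so that the constraint becomes $1 = (\tilde V^{T}x)^{T}\tilde\Sigma^{-1}(\tilde U^{T}y)$ and $\mathrm{trace}(\tilde B^{T}\tilde B) = \mathrm{trace}(\tilde\Sigma^{2})$. For fixed $\tilde U,\tilde V$, Lemma \ref{lemma:minimal trace diagonal} (applied with the roles of $x,y$ played by $\tilde V^{T}x$ and $\tilde U^{T}y$) identifies the minimising $\tilde\Sigma$ and gives the value $(\norma{\alpha}_{2/3})^{2}$ with $\alpha_{i} = (\tilde V^{T}x)_{i}(\tilde U^{T}y)_{i}$. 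The generalised H\"older inequality with $\tfrac32 = \tfrac34 + \tfrac34$ gives $\norma{\alpha}_{2/3}\le\norma{\tilde V^{T}x}_{4/3}\,\norma{\tilde U^{T}y}_{4/3}$, and since on a fixed $\ell_{2}$-sphere the $\ell_{4/3}$-norm is minimised precisely at the equal-coordinates vector, this bound is smallest when $\tilde V^{T}x\parallel 1_{J}$ and $\tilde U^{T}y\parallel 1_{J}$, i.e. $\tilde V = V$ and $\tilde U = U$; substituting back yields $\tilde\sigma_{j} = \norma{x}_{2}\norma{y}_{2}$ for every $j$, hence exactly the $B$ above.

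The first step is a routine computation. The only delicate point is Step 2: one must keep the H\"older exponents straight and recognise that the two flattening operations — on $\tilde V^{T}x$ and on $\tilde U^{T}y$ — can be performed simultaneously via an independent choice of $\tilde V$ and $\tilde U$. Because the H\"older step is an inequality rather than an identity, this argument only establishes approximate optimality, which is all that is needed (and claimed) when the lemma is invoked in the resampling construction.
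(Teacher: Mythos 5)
Your Step~1 is a complete and correct proof of the lemma as stated: since $UV^{T}$ is orthogonal, $B^{-1}=\norma{x}_{2}^{-1}\norma{y}_{2}^{-1}VU^{T}$, the constraint $x^{T}B^{-1}y=(V^{T}x)^{T}(U^{T}y)/(\norma{x}_{2}\norma{y}_{2})=1_{J}^{T}1_{J}/J=1$ follows at once, and $B^{T}B=\norma{x}_{2}^{2}\norma{y}_{2}^{2}I_{J}$ gives the trace identity. This is a genuinely different (and more elementary) route than the paper's proof, which never verifies the two identities directly but instead derives $B$ as the endpoint of the constrained trace-minimisation heuristic: fix $\tilde U,\tilde V$, minimise over $\tilde\Sigma$ via Lemma~\ref{lemma:minimal trace diagonal}, bound the resulting value by H\"older, and then pick $\tilde U=U$, $\tilde V=V$. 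For the formal content of the lemma your direct verification is cleaner and fully rigorous; the paper's argument buys only the motivation for why this particular $B$ is the one used in the resampling step.

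Your Step~2, however, contains an error, and it is worth flagging precisely because the paper's own proof shares it: on a fixed $\ell_{2}$-sphere the $\ell_{4/3}$-norm is \emph{maximised}, not minimised, at the equal-coordinates vector (for $p<2$ one has $\norma{z}_{p}\ge\norma{z}_{2}$, with equality exactly for vectors supported on a single coordinate, and $\norma{z}_{p}\le J^{1/p-1/2}\norma{z}_{2}$ with equality for flat vectors). Hence choosing $\tilde V^{T}x\parallel 1_{J}$ and $\tilde U^{T}y\parallel 1_{J}$ maximises the H\"older bound $(\norma{\tilde V^{T}x}_{4/3}\norma{\tilde U^{T}y}_{4/3})^{2}$ rather than minimising it; what this choice actually accomplishes is that the H\"older inequality becomes an equality, so $J\norma{x}_{2}^{2}\norma{y}_{2}^{2}$ is exactly the minimum over $\tilde\Sigma$ for this particular pair $(U,V)$, realised by a well-conditioned (scaled orthogonal) $B$. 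Global optimality is in fact unavailable: the infimum of $\mbox{trace}(\tilde B^{T}\tilde B)$ subject only to $x^{T}\tilde B^{-1}y=1$ is zero (take $\tilde B=\eps Q$ with $Q$ orthogonal chosen so that $x^{T}Q^{T}y=\eps$), which is why the main text calls $B$ only an approximate solution. Since the lemma's formal claim is just the two identities, your Step~1 suffices; Step~2 should either be dropped or rephrased as the equality-case observation above rather than as a minimisation of the H\"older bound.
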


\section{Proofs of the Main Results}\label{sec:AppB}
\noindent
\title{Proof of \textbf{Theorem \ref{thm:main thm}}}
\begin{proof}
\phantomsection\label{proof:main thm}
Let $u \in \mbox{span}\{v_{\mathcal{J}_i}\}_{i \le J}$. Then
\begin{align*}
\norma{Au - y}^{2} + \norma{u - \mu}_{R}^{2} &= \norma{Au -y}^{2} + \norma{u - \mu}_{R_{\mathcal{J}}}^{2} + \norma{u - \mu}_{R_{\mathcal{J}^{c}}}^{2}\\
&= \norma{\begin{pmatrix}A \\ (R_{\mathcal{J}}^{\dagger})^{\frac{1}{2}}\end{pmatrix}u - \begin{pmatrix}y\\ (R_{\mathcal{J}}^{\dagger})^{\frac{1}{2}}\mu \end{pmatrix}}^{2} + \norma{\mu}_{R_{\mathcal{J}^{c}}}^{2}\\
&= \norma{\tilde{A}u - \tilde{y}}^{2} + \norma{\mu}_{R_{\mathcal{J}^{c}}}^{2},
\end{align*}

where $\tilde{A} := \begin{pmatrix}A \\ (R_{\mathcal{J}}^{\dagger})^{\frac{1}{2}}\end{pmatrix} \in \mathbb{R}^{(m + n) \times n}$ and $\tilde{y} := \begin{pmatrix}y\\ (R_{\mathcal{J}}^{\dagger})^{\frac{1}{2}}\mu \end{pmatrix} \in \mathbb{R}^{m + n}$. 

Since  $[u_{\mathcal{J}, B}^{(1)}(0), \ldots , u_{\mathcal{J}, B}^{(J)}(0)] = \mathcal{V}_{\mathcal{J}}B$, the subspace property (Lemma \ref{lem:invariant_subspace}) means that $u_{\mathcal{J}, B}^{(i)}(t) \in \mbox{span}\{v_{\mathcal{J}_i}\}_{i \le J}$  for all $t \ge 0$. Furthermore, Since $\mbox{span}\{v_{\mathcal{J}_i}\}_{i \le J} \subseteq \R^{n}$ is a closed subspace, $\lim_{t \rightarrow \infty}u_{\mathcal{J}, B}^{(i)}(t) \in \mbox{span}\{v_{\mathcal{J}_i}\}_{i \le J}$ as well. Due to Corollary \ref{ensemble_collapse}, all particles converge to the same point in projection space, i.e. $\lim_{t \rightarrow \infty} \tilde{A}u_{\mathcal{J}, B}^{(i)}(t) = \lim_{t \rightarrow \infty} \tilde{A}\bar{u}_{\mathcal{J}, B}^{(i)}(t)$. As such, we are interested in calculating

\begin{equation*}
\lim_{t \rightarrow \infty}\norma{A\bar{u}_{\mathcal{J}, B}(t) - y}^{2} + \norma{\bar{u}_{\mathcal{J}, B}(t) - \mu}_{R}^{2} = \lim_{t \rightarrow \infty}\norma{\tilde{A}\bar{u}_{\mathcal{J}, B}(t) - \tilde{y}}^{2} + \norma{\mu}_{R_{\mathcal{J}^{c}}^{\dagger}}^{2}.
\end{equation*}

According to Corollary \ref{terminal_residual}, $\lim_{t \rightarrow \infty}\tilde{A}\bar{u}_{\mathcal{J}, B}(t) - \tilde{y} = Proj_{\mbox{ker}(\tilde{A}C_{0}\tilde{A}^{T})}(\tilde{A}\bar{u}_{\mathcal{J}, B}(0) - \tilde{y})$. From Lemma \ref{Im(AE)_perp} we have that $\mbox{ker}(\tilde{A}C_{0}\tilde{A}^{T}) = \mbox{Im}(P)$, where $P = \begin{pmatrix} I_{m}  & 0_{m \times 1} & 0_{m \times(n -J)} \\ -R^{\frac{1}{2}}_{\mathcal{J}}A^{T} &  \beta & \mathcal{V}_{\mathcal{J}^{c}}\end{pmatrix}$ and $\beta := R^{\frac{1}{2}}_{\mathcal{J}}\mathcal{V}_{\mathcal{J}}B^{-T}1_{J} = \mathcal{V}_{\mathcal{J}}\Lambda_{\mathcal{J}}^{\frac{1}{2}}B^{-T}1_{J}$. Since the columns of P are linearly independent, $P^{T}P$ is invertible and $Proj_{\mbox{ker}(\tilde{A}C_{0}\tilde{A}^{T})} = P(P^{T}P)^{-1}P^{T}$. As such
\begin{align*}
\norma{Proj_{\mbox{ker}(\tilde{A}C_{0}\tilde{A}^{T})}(\tilde{A}\bar{u}_{\mathcal{J}, B}(0) - \tilde{y})}^{2} = (P^{T}(\tilde{A}\bar{u}_{\mathcal{J}, B}(0) - \tilde{y}))^{T}(P^{T}P)^{-1}P^{T}(\tilde{A}\bar{u}_{\mathcal{J}, B}(0) - \tilde{y}).
\end{align*}
We first calculate
\begin{align*}
P^{T}&(\tilde{A}\bar{u}_{\mathcal{J}, B}(0) - \tilde{y}) = P^{T}\left(\frac{1}{J} \begin{pmatrix}A\\ (R_{\mathcal{J}}^{\dagger})^{\frac{1}{2}}\end{pmatrix}\mathcal{V}_{\mathcal{J}}B1_{J} - \begin{pmatrix}y \\ (R_{\mathcal{J}}^{\dagger})^{\frac{1}{2}}\mu\end{pmatrix}\right)\\
&= \begin{pmatrix} I_{m} & -AR_{\mathcal{J}}^{\frac{1}{2}}\\ 0_{1 \times m} & \beta^{T}\\ 0_{(n -J) \times m} & \mathcal{V}_{\mathcal{J}^{c}}^{T}\end{pmatrix}\left(\frac{1}{J}\begin{pmatrix}A\mathcal{V}_{\mathcal{J}}B1_{J}\\ (R_{\mathcal{J}}^{\dagger})^{\frac{1}{2}}\mathcal{V}_{\mathcal{J}}B1_{J}\end{pmatrix}  - \begin{pmatrix}y \\ (R_{\mathcal{J}}^{\dagger})^{\frac{1}{2}}\mu\end{pmatrix}\right)\\
&= \begin{pmatrix}0_{m \times 1} \\ \frac{1}{J}\beta^{T}(R_{\mathcal{J}}^{\dagger})^{\frac{1}{2}}\mathcal{V}_{\mathcal{J}}B1_{J} \\ 0_{(n-J)\times1} \end{pmatrix} - \begin{pmatrix}y - AV_{\mathcal{J}}V_{\mathcal{J}}^{T}\mu\\ \beta^{T}(R_{\mathcal{J}}^{\dagger})^{\frac{1}{2}}\mu \\ 0_{(n-J)\times1} \end{pmatrix} = \begin{pmatrix}0_{m \times 1} \\ 1 \\0_{(n-J)\times1} \end{pmatrix} - \begin{pmatrix}y - AV_{\mathcal{J}}V_{\mathcal{J}}^{T}\mu\\ 1_{J}^{T}B^{-1}\mathcal{V}_{\mathcal{J}}^{T}\mu \\ 0_{(n-J)\times1} \end{pmatrix}\\
&= \begin{pmatrix}-(y - AV_{\mathcal{J}}V_{\mathcal{J}}^{T}\mu)\\ 1 - 1_{J}^{T}B^{-1}\mathcal{V}_{\mathcal{J}}^{T}\mu \\0_{(n-J)\times1}\end{pmatrix} =: \begin{pmatrix} -z_{\mathcal{J}}\\ \gamma \\ 0_{(n-J)\times1} \end{pmatrix}.
\end{align*}
Now we turn our attention to $(P^{T}P)^{-1}$:
\begin{align*}
(P^{T}P)^{-1} &= \begin{pmatrix}I_{m} + AR_{\mathcal{J}}A^{T} & -AR_{\mathcal{J}}^{\frac{1}{2}}\beta & 0_{m \times (n - J)}\\	                   -\beta^{T}R_{\mathcal{J}}^{\frac{1}{2}}A^{T} &  \beta^{T}\beta & 0_{1\times(n-J)} \\ 0_{(n-J)\times m}  & 0_{(n-J)\times 1} & I_{(n-J)}\end{pmatrix}^{-1}\\
&=: \begin{pmatrix} Q_{\mathcal{J}}^{-1} & -\alpha & 0_{m \times (n-J)}\\
-\alpha & \beta^{T}\beta & 0_{1 \times (n-J)}\\0_{(n-J)\times m} & 0_{(n-J)\times1} & I_{(n-J)}
\end{pmatrix}^{-1}\\
&= \frac{1}{\Delta}
\begin{pmatrix}
\Delta Q_{\mathcal{J}} + Q_{\mathcal{J}}\alpha(Q_{\mathcal{J}}\alpha)^{T} & Q_{\mathcal{J}}\alpha & 0_{m \times (n-J)} \\
(Q_{\mathcal{J}}\alpha)^{T} & 1 & 0_{1 \times (n-J)}\\
0_{(n-J)\times m} & 0_{(n-J) \times 1} & \Delta I_{(n-J)}
\end{pmatrix},
\end{align*}
where $\Delta := \beta^{T}\beta - \alpha^{T}Q_{\mathcal{J}}\alpha$. Therefore
\begin{align*}
&|Proj_{\mbox{ker}(\tilde{A}C_{0}\tilde{A}^{T})}(\tilde{A}\bar{u}_{\mathcal{J}, B}(0) - \tilde{y})|^{2} = (P^{T}(\tilde{A}\bar{u}_{\mathcal{J}, B}(0) - \tilde{y}))^{T}(P^{T}P)^{-1}P^{T}(\tilde{A}\bar{u}_{\mathcal{J}, B}(0) - \tilde{y})\\
&= \frac{1}{\Delta}\begin{pmatrix}-z_{\mathcal{J}}^{T} & \gamma & 0_{1 \times (n-J)}\end{pmatrix}
\begin{pmatrix}
\Delta Q_{\mathcal{J}} + Q_{\mathcal{J}}\alpha(Q_{\mathcal{J}}\alpha)^{T} & Q_{\mathcal{J}}\alpha & 0_{m \times (n-J)} \\
(Q_{\mathcal{J}}\alpha)^{T} & 1 & 0_{1 \times (n-J)}\\
0_{(n-J)\times m} & 0_{(n-J) \times 1} & \Delta I_{(n-J)}
\end{pmatrix}
\begin{pmatrix}-z_{\mathcal{J}} \\ \gamma \\ 0_{(n-J) \times 1}\end{pmatrix}\\
&= z_{\mathcal{J}}^{T}Q_{\mathcal{J}}z_{\mathcal{J}} + \frac{1}{\Delta}(\alpha^{T}Q_{\mathcal{J}}z_{\mathcal{J}} - \gamma)^{2}.
\end{align*}

Recall $\beta := \mathcal{V}_{\mathcal{J}}\Lambda_{\mathcal{J}}^{\frac{1}{2}}B^{-T}1_{J}$, $\alpha := AR_{\mathcal{J}}^{\frac{1}{2}}\beta = A\mathcal{V}_{\mathcal{J}}\Lambda_{\mathcal{J}}B^{-T}1_{J}$, $z_{\mathcal{J}} := y - A\mathcal{V}_{\mathcal{J}}\mathcal{V}_{\mathcal{J}}^{T}\mu$, $\gamma := 1 - 1_{J}^{T}B^{-1}\mathcal{V}_{\mathcal{J}}^{T}\mu$ and $\Delta := \beta^{T}\beta - \alpha^{T}Q_{\mathcal{J}}\alpha$. Defining $A_{\mathcal{J}} := A\mathcal{V}_{\mathcal{J}}$ for convenience, we see that

\begin{align*}
&\lim_{t \rightarrow \infty}(\norma{A\bar{u}_{\mathcal{J}, B}(t) - y}^{2} + \norma{\bar{u}_{\mathcal{J}, B}(t) - \mu}_{R}^{2}) = \lim_{t \rightarrow \infty}\norma{\tilde{A}\bar{u}_{\mathcal{J}, B}(t) - \tilde{y}}^{2} + \norma{\mu}_{R_{\mathcal{J}^{c}}^{\dagger}}^{2}\\
&= \norma{Proj_{\mbox{ker}(\tilde{A}C_{0}\tilde{A}^{T})}(\tilde{A}\bar{u}_{\mathcal{J}, B}(0) - \tilde{y})}^{2} + \norma{\mu}_{R_{\mathcal{J}^{c}}^{\dagger}}^{2}
\\
&= z_{\mathcal{J}}^{T}Q_{\mathcal{J}}z_{\mathcal{J}} + \frac{1}{\Delta}(\alpha^{T}Q_{\mathcal{J}}z_{\mathcal{J}} - \gamma)^{2} + \norma{\mu}_{R_{\mathcal{J}^{c}}^{\dagger}}^{2}\\
&= z_{\mathcal{J}}^{T}Q_{\mathcal{J}}z_{\mathcal{J}} + \frac{(1 - 1_{J}^{T}B^{-1}(\mathcal{V}_{\mathcal{J}}^{T}\mu + \Lambda_{\mathcal{J}}A_{\mathcal{J}}^{T}Q_{\mathcal{J}}z_{\mathcal{J}}))^{2}}{ 1_{J}^{T}B^{-1}\Lambda_{\mathcal{J}}B^{-T}1_{J} - (A_{\mathcal{J}}\Lambda_{\mathcal{J}}B^{-T}1_{J})^{T}Q_{\mathcal{J}}A_{\mathcal{J}}\Lambda_{\mathcal{J}}B^{-T}1_{J}} + \norma{\mu}_{R_{\mathcal{J}^{c}}^{\dagger}}^{2}.
\end{align*}

To obtain the second equality of the theorem, we apply the Woodbury matrix identity:
\begin{equation*}
Q_{\mathcal{J}} = (I_{m} + A_{\mathcal{J}}\Lambda_{\mathcal{J}}A_{\mathcal{J}}^{T})^{-1} = I_{m} - A_{\mathcal{J}}M_{\mathcal{J}}A_{\mathcal{J}}^{T},
\end{equation*}
where $M_{\mathcal{J}} := (\Lambda_{\mathcal{J}}^{-1} + A_{\mathcal{J}}^{T}A_{\mathcal{J}})^{-1}$. It can easily be shown that
\begin{align*}
Q_{\mathcal{J}}A_{\mathcal{J}}\Lambda_{\mathcal{J}} &= A_{\mathcal{J}}M_{\mathcal{J}},\\
(A_{\mathcal{J}}\Lambda_{\mathcal{J}})^{T}Q_{\mathcal{J}}A_{\mathcal{J}}\Lambda_{\mathcal{J}} &= \Lambda_{\mathcal{J}}^{-1}- M_{\mathcal{J}}.
\end{align*}
As such
\begin{equation*}
1_{J}^{T}B^{-1}\Lambda_{\mathcal{J}}B^{-T}1_{J} - (A_{\mathcal{J}}\Lambda_{\mathcal{J}}B^{-T}1_{J})^{T}Q_{\mathcal{J}}A_{\mathcal{J}}\Lambda_{\mathcal{J}}B^{-T}1_{J} = 1_{J}^{T}B^{-1}M_{\mathcal{J}}B^{-T}1_{J}.
\end{equation*}

Then
\begin{align*}
z_{\mathcal{J}}^{T}Q_{\mathcal{J}}z_{\mathcal{J}}& + \frac{(1 - 1_{J}^{T}B^{-1}(\mathcal{V}_{\mathcal{J}}^{T}\mu + \Lambda_{\mathcal{J}}A_{\mathcal{J}}^{T}Q_{\mathcal{J}}z_{\mathcal{J}}))^{2}}{ 1_{J}^{T}B^{-1}\Lambda_{\mathcal{J}}B^{-T}1_{J} - (A_{\mathcal{J}}\Lambda_{\mathcal{J}}B^{-T}1_{J})^{T}Q_{\mathcal{J}}A_{\mathcal{J}}\Lambda_{\mathcal{J}}B^{-T}1_{J}}   \\ 
  &= z_{\mathcal{J}}^{T}z_{\mathcal{J}} - z_{\mathcal{J}}^{T}A_{\mathcal{J}}M_{\mathcal{J}}A_{\mathcal{J}}^{T}z_{\mathcal{J}} + \frac{(1 - 1_{J}^{T}B^{-1}(\mathcal{V}_{\mathcal{J}}^{T}\mu + M_{\mathcal{J}}A_{\mathcal{J}}^{T}z_{\mathcal{J}}))^{2}}{1_{J}^{T}B^{-1}M_{\mathcal{J}}B^{-T}1_{J}}\\
&= z_{\mathcal{J}}^{T}z_{\mathcal{J}} - z_{\mathcal{J}}^{T}A_{\mathcal{J}}M_{\mathcal{J}}A_{\mathcal{J}}^{T}z_{\mathcal{J}} + \frac{(1 - 1_{J}^{T}B^{-1}M_{\mathcal{J}}\mathcal{V}_{\mathcal{J}}^{T}(A^{T}y + R_{\mathcal{J}}^{\dagger}\mu))^{2}}{1_{J}^{T}B^{-1}M_{\mathcal{J}}B^{-T}1_{J}}, 
\end{align*}
where, for the last line, we have used
\begin{align*}
\mathcal{V}_{\mathcal{J}}^{T}\mu + M_{\mathcal{J}}A_{\mathcal{J}}^{T}z_{\mathcal{J}} &=
\mathcal{V}_{\mathcal{J}}^{T}\mu + M_{\mathcal{J}}A_{\mathcal{J}}^{T}y - M_{\mathcal{J}}A_{\mathcal{J}}^{T}A_{\mathcal{J}}\mathcal{V}_{\mathcal{J}}^{T}\mu\\
&= \mathcal{V}_{\mathcal{J}}^{T}\mu + M_{\mathcal{J}}A_{\mathcal{J}}^{T}y - (I - M_{\mathcal{J}}\Lambda_{\mathcal{J}}^{-1})\mathcal{V}_{\mathcal{J}}^{T}\mu\\
&= M_{\mathcal{J}}\mathcal{V}_{\mathcal{J}}^{T}(A^{T}y + R_{\mathcal{J}}^{\dagger}\mu).
\end{align*}
\end{proof}

\noindent
\title{Proof of \textbf{Corollary \ref{cor:double optimal B condition}}}
\begin{proof}
\phantomsection\label{proof: double optimal B condition}
We define the following optimisation problem:
\begin{equation*}
    \min_{B \in \mathcal{B}}\norma{A\bar{u}_{\mathcal{J}, B}(0) - y}^{2} + \norma{\bar{u}_{\mathcal{J}, B}(0) - \mu}_{R}^{2} = \norma{\mu}_{R_{\mathcal{J}^{c}}}^{2} + \min_{B \in \mathcal{B}}\norma{A_{\mathcal{J}}B\frac{1}{J}1_{J} - y}^{2} + \norma{\mathcal{V}_{\mathcal{J}}B\frac{1}{J}1_{J}}_{R_{\mathcal{J}}}^{2},
\end{equation*}
where
\begin{equation*}
    \mathcal{B} := \{B \in \mathbb{R}^{J \times J} : 1 = 1_{J}^{T}B^{-1}   M_{\mathcal{J}}\mathcal{V}_{\mathcal{J}}^{T}(A^{T}y + R_{\mathcal{J}}^{\dagger}\mu)\}.
\end{equation*}
Defining $w := \frac{1}{J}B1_{J}$, $v := B^{-T}1_{J}$ and $\tilde{z} = M_{\mathcal{J}}\mathcal{V}_{\mathcal{J}}^{T}(A^{T}\tilde{y} + R_{\mathcal{J}}^{\dagger}\mu)$, we have the equivalent optimisation problem:
\begin{equation*}
    \min_{v, w} (\norma{A_{\mathcal{J}}w - y}^{2} + \norma{\mathcal{V}_{\mathcal{J}}w}_{R}^{2}) \mbox{ s.t. } v^{T}w = 1 \mbox{ \& } v^{T}\tilde{z} = 1.
\end{equation*}
We apply the method of Lagrange. First we define the Lagrangian: 
\begin{equation*}
    \mathcal{L}(v, w, \alpha, \beta) = \norma{A_{\mathcal{J}}w - y}^{2} + w^{T}\Lambda_{\mathcal{J}}w - \alpha (v^{T}w - 1) - \beta (\tilde{z}^{T}v - 1).
\end{equation*}
The first necessary condition for optimality is given by
\begin{align*}
    0 &= \frac{\partial \mathcal{L}}{\partial w }\\ 
    &= 2A_{\mathcal{J}}^{T}A_{\mathcal{J}}w - 2A_{\mathcal{J}}^{T}y + 2 \Lambda_{\mathcal{J}}w - \alpha v\\
    &= 2 M_{\mathcal{J}}^{-1}w - 2A_{\mathcal{J}}^{T}y - \alpha v,
\end{align*}
and therefore by
\begin{equation*}
    0 = w - M_{\mathcal{J}}A_{\mathcal{J}}^{T}y- \frac{\alpha}{2}M_{\mathcal{J}}v.
\end{equation*}
The second necessary condition for optimality is given by
\begin{align*}
0 &= \frac{\partial \mathcal{L}}{\partial v } \\
&= -\alpha w - \beta \tilde{z}.
\end{align*}
It then follows that 
\begin{equation*}
\alpha = \alpha(v^{T}w)= -\beta v^{T}\tilde{z} = -\beta,
\end{equation*}
and therefore
\begin{equation*}
0 = \alpha(w -\tilde{z}).
\end{equation*}
From the first necessary condition for optimality, we have
\begin{align*}
0 &= v^{T}(w - \tilde{z} - \frac{\alpha}{2}M_{\mathcal{J}}v)= 1 - 1 + \frac{\alpha}{2}v^{T}M_{\mathcal{J}}v.
\end{align*}
Since $v \neq 0$ and $M_{\mathcal{J}} \succ 0$, we therefore have that $\alpha = 0$. It then follows that
\begin{align*}
    w &= \tilde{z},\\
    \tilde{z}^{T}v &= 1,\\
    \alpha &= 0,\\
    \beta &= 0,
\end{align*}
which leads to
\begin{equation*}
    B1_{J} = JM_{\mathcal{J}}A_{\mathcal{J}}^{T}y.
\end{equation*}
\end{proof}

\noindent
\title{Proof of \textbf{Lemma} \ref{lemma:greedy}} 
\begin{proof}
\phantomsection\label{proof:greedy}
Recall that $x \in \mathbb{R}^{m}$, $P_{k} := [p_{1}, \ldots, p_{k}] \in \mathbb{R}^{m \times k}$, $\tilde{M}_{k} := (I_{k} + P_{k}^{T}P_{k})^{-1} \in \mathbb{R}^{k \times k}$ and $\Delta_{k + 1} := 1 + p_{k+1}^{T}p_{k+1} - p_{k+1}^{T}P_{k}\tilde{M}_{k}P_{k}^{T}p_{k+1}$. Then
\begin{align*}
    \tilde{M}_{k + 1} &= \begin{pmatrix} I_{k} + P_{k}^{T}P_{k} & P_{k}^{T}p_{k+1} \\ p_{k+1}^{T}P_{k} & 1 + p_{k+1}^{T}p_{k+1} \end{pmatrix}^{-1}\\
    &= \begin{pmatrix} \tilde{M}_{k}^{-1} & P_{k}^{T}p_{k+1} \\ p_{k+1}^{T}P_{k} & 1 + p_{k+1}^{T}p_{k+1} \end{pmatrix}^{-1}\\
    &= \frac{1}{\Delta_{k+1}}\begin{pmatrix} \Delta_{k+1}\tilde{M}_{k} + (M_{k}P_{k}^{T}p_{k+1})(\tilde{M}_{k}P_{k}^{T}p_{k+1})^{T} & -\tilde{M}_{k}P_{k}^{T}p_{k+1} \\ -p_{k+1}^{T}P_{k}\tilde{M}_{k} & 1 \end{pmatrix},
\end{align*}
where the last equality is due to block-wise matrix inversion. Then
\begin{align*}
&x^{T}P_{k+1}M_{k+1}P_{k+1}^{T}x \\
&= \frac{1}{\Delta_{k + 1}} \begin{pmatrix} P_{k}^{T}x \\ p_{k+1}^{T}x\end{pmatrix}^{T} \begin{pmatrix}\Delta_{k+1}\tilde{M}_{k} + (\tilde{M}_{k}P_{k}^{T}p_{k+1})(\tilde{M}_{k}P_{k}^{T}p_{k+1})^{T} & -\tilde{M}_{k}P_{k}^{T}p_{k+1} \\ -p_{k+1}^{T}P_{k}\tilde{M}_{k} & 1 \end{pmatrix}\begin{pmatrix} P_{k}^{T}x \\ p_{k+1}^{T}x\end{pmatrix}\\
&= x^{T}P_{k}\tilde{M}_{k}P_{k}^{T}x + \frac{1}{\Delta_{k+1}}(x^{T}P_{k}\tilde{M}_{k}P_{k}^{T}p_{k+1} - x^{T}p_{k+1})^{2}.
\end{align*}
\end{proof}

\noindent
\title{Proof of \textbf{Proposition \ref{proposition:optimal_case}}}
\begin{proof}
\phantomsection\label{proof:optimal_case}
Recall that $R = \mathcal{V}\Lambda\mathcal{V}^{T}$, A has a singular value decomposition $A = \mathcal{U}\Sigma\mathcal{V}^{T}$ and $\mathcal{V}_{\mathcal{J}} := [v_{\mathcal{J}_{1}}, \ldots, v_{\mathcal{J}_{J}}] \in \R^{n \times J}$. We further define $\mathcal{U}_{\mathcal{J}} := [u_{\mathcal{J}_{1}}, \ldots, u_{\mathcal{J}_{J}}] \in \R^{n \times J}$ and $\Sigma_{\mathcal{J}} := \mbox{diag}(\{\sigma_{\mathcal{J}_{1}}, \ldots, \sigma_{\mathcal{J}_{J}} \})$. Then
\begin{align*}
A_{\mathcal{J}} &= A\mathcal{V}_{\mathcal{J}} = \mathcal{U}_{\mathcal{J}}\Sigma_{\mathcal{J}},\\
z_{\mathcal{J}} &= y - AV_{\mathcal{J}}V_{\mathcal{J}}^{T}\mu = y - U_{\mathcal{J}}\Sigma_{\mathcal{J}}V_{\mathcal{J}}^{T}\mu,\\
A_{\mathcal{J}}^{T}z_{\mathcal{J}} &= \Sigma_{\mathcal{J}}\mathcal{U}_{\mathcal{J}}^{T}y - \Sigma_{\mathcal{J}}^{2}\mathcal{V}_{\mathcal{J}}^{T}\mu,\\
\mathcal{M}_{\mathcal{J}} &= (\Lambda_{\mathcal{J}}^{-1} + A_{\mathcal{J}}^{T}A_{\mathcal{J}})^{-1} = (\Lambda_{\mathcal{J}}^{-1} + \Sigma_{\mathcal{J}}^{2})^{-1}.
\end{align*}
It is then clear that
\begin{align}\label{eq:J objective specific A}
z_{\mathcal{J}}^{T}z_{\mathcal{J}} - z_{\mathcal{J}}^{T}A_{\mathcal{J}}\mathcal{M}_{\mathcal{J}}A_{\mathcal{J}}^{T}z_{\mathcal{J}} - \norma{\mu}^{2}_{R_{\mathcal{J}}^{\dagger}} =
&y^{T}y + \sum_{k \in \mathcal{J}} (- 2  \sigma_{k}(y^{T}u_{k})(\mu^{T}v_{k}) +  \sigma_{k}^{2}(\mu^{T}v_{k})^{2} \nonumber\\
&- \frac{\lambda_{k}(\sigma_{k}(y^{T}u_{k}) - \sigma_{k}^{2}(\mu^{T}v_{k}))^{2}}{1 + \lambda_{k}\sigma_{k}^{2}} - \frac{(\mu^{T}v_{k})^{2}}{\lambda_{k}}).
\end{align}

Examining Algorithm \ref{alg:selectJ}, we see that
\begin{align*}
    p_{k} &:= \lambda^{\frac{1}{2}}_{k}Av_{k}= \lambda^{\frac{1}{2}}_{k}\mathcal{U}\Sigma\mathcal{V}^{T}v_{k}=\lambda^{\frac{1}{2}}_{k}\sigma_{k}u_{k}.
\end{align*}
Since the $\{u_{k}\}_{k}$ are orthogonal, $\{ p_{k}\}_{k}$ are also orthogonal. Since $j^{*}\in \mathcal{J}_{k}^{c}$, at every stage of the algorithm, we have that 
\begin{align*}
P^{T}p_{j} &= 0,\\
z^{T}u_j &= (y - U_{\mathcal{J}}\Sigma_{\mathcal{J}}V_{\mathcal{J}}^{T}\mu)^{T}u_j = y^{T}u_j,\\
z^{T}Av_{j} &= \sigma_j (z^{T}u_{j}) = \sigma_{j}(y^{T}u_{j}),\\
\tilde{z}^{T}p_j &= (z - Av_{j}v_{j}^{T}\mu)^{T}(\lambda_j^{\frac12}\sigma_j u_j) = \lambda^{\frac{1}{2}}_{j}\sigma_{j}(y^{T}u_j - \sigma_{j}(\mu^{T}v_{j})).
\end{align*}
This yields
\begin{align}\label{eq:J objective specific A step}
-2(\mu^{T}v_{j})&(z^{T}Av_{j}) + (\mu^{T}v_{j})^{2}(\norma{Av_{j}}^{2} - \frac{1}{\lambda_{j}}) - \frac{(\tilde{z}^{T}P\tilde{M}P^{T}p_{j} - \tilde{z}^{T}p_j)^{2}}{1 + \norma{p_{j}}^{2} - p_{j}^{T}P\tilde{M}P^{T}p_{j}} = \nonumber\\
& -2\sigma_{j}(\mu^{T}v_{j})(y^{T}u_{j}) + (\mu^{T}v_{j})^{2}(\sigma_{j}^{2} - \frac{1}{\lambda_{j}}) - \frac{\lambda_{j}\sigma_{j}^{2}(y^{T}u_{j} - \sigma_{j}(\mu^{T}v_{j}))^{2}}{1 + \lambda_{j}\sigma_{j}^{2}}.
\end{align}
The RHS of this equation coincides with the summand in Equation \eqref{eq:J objective specific A}. As such, by choosing j at every stage of Algorithm \ref{alg:selectJ} to maximise the RHS of Equation \eqref{eq:J objective specific A step}, we end up maximising the objective function given by the LHS of Equation \eqref{eq:J objective specific A}.
\end{proof}
\bibliographystyle{authordate1}
\bibliography{refs}

\begin{thebibliography}{}

\bibitem[\protect\citename{Bergemann \& Reich, }2009]{Bergemann2009}
Bergemann, Kay, \& Reich, Sebastian. 2009.
\newblock A localization technique for ensemble {{K}alman} filters.
\newblock {\em Q. J. E. Meteorol. Soc.}, {\bf 136}, 701--707.

\bibitem[\protect\citename{Bergemann \& Reich, }2010]{Bergemann2010}
Bergemann, Kay, \& Reich, Sebastian. 2010.
\newblock A mollified ensemble {{K}alman} filter.
\newblock {\em Quarterly Journal of the Royal Meteorological Society}, {\bf 136}(651), 1636--1643.

\bibitem[\protect\citename{Bl{\"o}mker {\em et~al.}, }2018]{bloemker2018strongly}
Bl{\"o}mker, Dirk, Schillings, Claudia, \& Wacker, Philipp. 2018.
\newblock A strongly convergent numerical scheme from ensemble {K}alman inversion.
\newblock {\em SIAM Journal on Numerical Analysis}, {\bf 56}(4), 2537--2562.

\bibitem[\protect\citename{Blömker {\em et~al.}, }2019]{Bloemker2019}
Blömker, Dirk, Schillings, Claudia, Wacker, Philipp, \& Weissmann, Simon. 2019.
\newblock Well posedness and convergence analysis of the ensemble {{K}alman} inversion.
\newblock {\em Inverse Problems}, {\bf 35}(8), 085007.

\bibitem[\protect\citename{Blömker {\em et~al.}, }2021]{Bloemker2021}
Blömker, Dirk, Schillings, Claudia, Wacker, Philipp, \& Weissmann, Simon. 2021.
\newblock Continuous time limit of the stochastic ensemble {{K}alman} inversion: strong convergence analysis.
\newblock {\em SIAM Journal on Numerical Analysis}, {\bf 60}(6), 3181--3215.

\bibitem[\protect\citename{Bungert \& Wacker, }2021]{bungert2021complete}
Bungert, Leon, \& Wacker, Philipp. 2021.
\newblock Complete Deterministic Dynamics and Spectral Decomposition of the Ensemble {K}alman Inversion.
\newblock {\em arXiv preprint arXiv:2104.13281}.

\bibitem[\protect\citename{Calvello {\em et~al.}, }2022]{Stuart2022}
Calvello, Edoardo, Reich, Sebastian, \& Stuart, Andrew~M. 2022.
\newblock {\em Ensemble {{K}alman} methods: a mean field perspective}.

\bibitem[\protect\citename{Chada {\em et~al.}, }2020]{Tong2020}
Chada, Neil~K., Stuart, Andrew~M., \& Tong, Xin~T. 2020.
\newblock Tikhonov regularization within ensemble {{K}alman} inversion.
\newblock {\em SIAM Journal on Numerical Analysis}, {\bf 58}(2), 1263--1294.

\bibitem[\protect\citename{Chada {\em et~al.}, }2021]{iterKalman}
Chada, Neil~K., Chen, Yuming, \& Sanz-Alonso, Daniel. 2021.
\newblock Iterative ensemble {K}alman methods: A unified perspective with some new variants.
\newblock {\em Foundations of Data Science}, {\bf 3}(3), 331--369.

\bibitem[\protect\citename{Cui \& Tong, }2022]{LIS}
Cui, Tiangang, \& Tong, Xin~T. 2022.
\newblock {A unified performance analysis of likelihood-informed subspace methods}.
\newblock {\em Bernoulli}, {\bf 28}(4), 2788 -- 2815.

\bibitem[\protect\citename{Dashti {\em et~al.}, }2013]{Dashti_2013}
Dashti, M, Law, K J~H, Stuart, A~M, \& Voss, J. 2013.
\newblock MAP estimators and their consistency in Bayesian nonparametric inverse problems.
\newblock {\em Inverse Problems}, {\bf 29}(9), 095017.

\bibitem[\protect\citename{Ding \& Li, }2020]{Ding2020}
Ding, Zhiyan, \& Li, Qin. 2020.
\newblock Ensemble {{K}alman} inversion: mean-field limit and convergence analysis.
\newblock {\em Staistics and Computing}, {\bf 31}(9).

\bibitem[\protect\citename{Ernst {\em et~al.}, }2015]{ErnstEtAl2015}
Ernst, Oliver~G., Sprungk, Bj{\"o}rn, \& Starkloff, Hans-J{\"o}rg. 2015.
\newblock Analysis of the ensemble and polynomial chaos {K}alman filters in {B}ayesian inverse problems.
\newblock {\em SIAM/ASA Journal on Uncertainty Quantification}, {\bf 3}(1), 823--851.

\bibitem[\protect\citename{Evans, }2010]{evans10}
Evans, Lawrence~C. 2010.
\newblock {\em Partial differential equations}.
\newblock Providence, R.I.: American Mathematical Society.

\bibitem[\protect\citename{Evensen, }2003]{Evensen2003}
Evensen, Geir. 2003.
\newblock The {E}nsemble {{K}alman} Filter: theoretical formulation and practical implementation.
\newblock {\em Ocean Dynamics}, {\bf 53}(4), 343--367.

\bibitem[\protect\citename{Evensen {\em et~al.}, }2022]{Evensen2022}
Evensen, Geir, Vossepoel, Femke~C., \& van Leeuwen, Peter~Jan. 2022.
\newblock {\em Localization and Inflation}.
\newblock Cham: Springer International Publishing.
\newblock Pages  111--122.

\bibitem[\protect\citename{Ghattas \& Sanz-Alonso, }2022]{ghattas2022non}
Ghattas, Omar~Al, \& Sanz-Alonso, Daniel. 2022.
\newblock Non-Asymptotic Analysis of Ensemble {K}alman Updates: Effective Dimension and Localization.
\newblock {\em arXiv preprint arXiv:2208.03246}.

\bibitem[\protect\citename{Iglesias \& Yang, }2021]{Iglesias_2021}
Iglesias, Marco, \& Yang, Yuchen. 2021.
\newblock Adaptive regularisation for ensemble {K}alman inversion.
\newblock {\em Inverse Problems}, {\bf 37}(2), 025008.

\bibitem[\protect\citename{Iglesias, }2014]{Iglesias2014}
Iglesias, Marco~A. 2014.
\newblock Iterative regularization for ensemble data assimilation in reservoir models.
\newblock {\em Computational Geosciences}, {\bf 19}, 177--212.

\bibitem[\protect\citename{Iglesias, }2016a]{Iglesias2016}
Iglesias, Marco~A. 2016a.
\newblock A regularizing iterative ensemble {{K}alman} method for {PDE}-constrained inverse problems.
\newblock {\em Inverse Problems}, {\bf 32}(2), 025002.

\bibitem[\protect\citename{Iglesias, }2016b]{Iglesias_2016}
Iglesias, Marco~A. 2016b.
\newblock A regularizing iterative ensemble {K}alman method for PDE-constrained inverse problems.
\newblock {\em Inverse Problems}, {\bf 32}(2), 025002.

\bibitem[\protect\citename{Iglesias {\em et~al.}, }2013]{iglesias2013ensemble}
Iglesias, Marco~A, Law, Kody~JH, \& Stuart, Andrew~M. 2013.
\newblock Ensemble Kalman methods for inverse problems.
\newblock {\em Inverse Problems}, {\bf 29}(4), 045001.

\bibitem[\protect\citename{Klebanov \& Wacker, }2023]{Klebanov_2023}
Klebanov, Ilja, \& Wacker, Philipp. 2023.
\newblock Maximum a posteriori estimators in $\ell^p$ are well-defined for diagonal Gaussian priors.
\newblock {\em Inverse Problems}, {\bf 39}(6), 065009.

\bibitem[\protect\citename{Lange, }2021]{lange2021derivation}
Lange, Theresa. 2021.
\newblock Derivation of ensemble {K}alman--{B}ucy filters with unbounded nonlinear coefficients.
\newblock {\em Nonlinearity}, {\bf 35}(2), 1061.

\bibitem[\protect\citename{Li \& Reynolds, }2009]{Li2009}
Li, Gaoming, \& Reynolds, Albert. 2009.
\newblock Iterative ensemble {{K}alman} filters for data assimilation.
\newblock {\em SPE Journal - SPE J}, {\bf 14}(09), 496--505.

\bibitem[\protect\citename{Parzer \& Scherzer, }2022]{Scherzer}
Parzer, Fabian, \& Scherzer, Otmar. 2022.
\newblock On convergence rates of adaptive ensemble {K}alman inversion for linear ill-posed problems.
\newblock {\em Numerische Mathematik}, {\bf 152}(2), 371--409.

\bibitem[\protect\citename{Qian \& Beattie, }2024]{qian2024fundamentalsubspacesensemblekalman}
Qian, Elizabeth, \& Beattie, Christopher. 2024.
\newblock {\em The Fundamental Subspaces of Ensemble {K}alman Inversion}.

\bibitem[\protect\citename{Schillings \& Stuart, }2017a]{Schillingsnoisy}
Schillings, Claudia, \& Stuart, Andrew. 2017a.
\newblock Convergence analysis of ensemble {{K}alman} inversion: the linear, noisy case.
\newblock {\em Applicable Analysis}, {\bf 97}.

\bibitem[\protect\citename{Schillings \& Stuart, }2017b]{schillings2017analysis}
Schillings, Claudia, \& Stuart, Andrew~M. 2017b.
\newblock Analysis of the ensemble {K}alman filter for inverse problems.
\newblock {\em SIAM Journal on Numerical Analysis}, {\bf 55}(3), 1264--1290.

\bibitem[\protect\citename{Tong {\em et~al.}, }2015]{Tong2015}
Tong, Xin~T, Majda, Andrew~J, \& Kelly, David. 2015.
\newblock Nonlinear stability of the ensemble {{K}alman} filter with adaptive covariance inflation.
\newblock {\em Commun. Math. Sci.}, {\bf 14}, 1283–1313.

\bibitem[\protect\citename{Tong {\em et~al.}, }2016]{Tong2016}
Tong, Xin~T, Majda, Andrew~J, \& Kelly, David. 2016.
\newblock Nonlinear stability and ergodicity of ensemble based {{K}alman} filters.
\newblock {\em Nonlinearity}, {\bf 29}(2), 657--691.

\bibitem[\protect\citename{Weissmann, }2022]{Weissmann_2022gr}
Weissmann, Simon. 2022.
\newblock Gradient flow structure and convergence analysis of the ensemble {K}alman inversion for nonlinear forward models.
\newblock {\em Inverse Problems}, {\bf 38}(10), 105011.

\bibitem[\protect\citename{Weissmann {\em et~al.}, }2022]{Weissmann2022}
Weissmann, Simon, Chada, Neil~K, Schillings, Claudia, \& Tong, Xin~T. 2022.
\newblock Adaptive {Tikhonov} strategies for stochastic ensemble {{K}alman} inversion.
\newblock {\em Inverse Problems}, {\bf 38}(4), 045009.

\end{thebibliography}
\end{document}